\documentclass[11pt]{article}

\usepackage[T1]{fontenc}
\usepackage{upgreek,euscript}
\usepackage[utf8]{inputenc}
\usepackage{microtype}
\usepackage{hyperref}

\usepackage{graphicx}
\usepackage{mathtools, amsmath, amssymb, amsthm, enumerate, faktor,float}
\usepackage{cleveref}
\usepackage[paper=a4paper,                     
            left=27mm,                         
            right=27mm,                        
            top=30mm,                          
            bottom=25mm,                       
            bindingoffset=0mm                  
           ]{geometry}
\usepackage{xcolor}
\usepackage{tikz-cd}
\usepackage{tikz}
\usepackage{amsthm}
\usepackage{amsmath,amssymb}
\usetikzlibrary{trees,positioning,arrows.meta}
\usepackage[backend=biber, style=alphabetic]{biblatex}
\usepackage{hyperref}
\allowdisplaybreaks

\numberwithin{equation}{section}
\theoremstyle{plain}
\newtheorem{definition}{Definition}[section]
\newtheorem{theorem}{Theorem}[section]
\newtheorem{lemma}[theorem]{Lemma}
\newtheorem{corollary}[theorem]{Corollary}
\newtheorem{proposition}[theorem]{Proposition}

\newtheorem{assumption}[theorem]{Assumption}

\theoremstyle{definition}
\newtheorem{remark}[theorem]{Remark}

\newcommand{\QQ}{\mathbb{Q}}
\newcommand{\GG}{\mathbb{G}}
\newcommand{\PP}{\mathbb{P}}
\newcommand{\CC}{\mathbb{C}}
\newcommand{\ZZ}{\mathbb{Z}}

\newcommand{\cM}{\mathcal{M}}
\newcommand{\cO}{\mathcal{O}}

\newcommand{\cL}{\mathcal L}

\newcommand{\cY}{\mathcal Y}
\newcommand{\cA}{\mathcal A}
\newcommand{\cX}{\mathcal X}
\newcommand{\cF}{\mathcal F}

\newcommand{\cK}{\mathcal K}

\renewcommand{\AA}{\mathbb{A}}
\newcommand{\sing}{\mathrm{sing}}

\newcommand{\sm}{\mathrm{sm}}
\newcommand{\QS}{\mathrm{QS}}

\DeclareMathOperator{\PGL}{PGL}

\DeclareMathOperator{\Aut}{Aut}
\DeclareMathOperator{\Stab}{Stab}

\DeclareMathOperator{\Cl}{Cl}

\DeclareMathOperator{\Hom}{Hom}

\DeclareMathOperator{\Pic}{Pic}

\DeclareMathOperator{\pr}{pr}
\DeclareMathOperator{\Char}{Char}
\DeclareMathOperator{\Qcoh}{Qcoh}
\DeclareMathOperator{\Coh}{Coh}
\DeclareMathOperator{\Bl}{Bl}
\DeclareMathOperator{\codim}{codim}
\DeclareMathOperator{\Lie}{Lie}

\DeclareMathOperator{\Low}{\mathbf{L}}
\DeclareMathOperator{\Mid}{\mathbf{M}}
\DeclareMathOperator{\Up}{\mathbf{U}}

\newcommand{\CN}{\textcolor{red}}
\newcommand{\DB}{\textcolor{blue}}

\title{Finite automorphism groups of hypersurfaces via Bott vanishing}

\author{Dominic Bunnett \& Caroline Namanya}

\date{}

\begin{document}

\maketitle

\begin{abstract}
We give a cohomological criterion for the automorphism group scheme of a hypersurface in a smooth Deligne–Mumford stack to be discrete in tame characteristics.
The criterion converts Bott-type vanishing on the ambient stack into vanishing of infinitesimal automorphisms of every quasismooth hypersurface in a linear system.
We apply it to toric orbifolds, obtain uniform finiteness results for quasismooth weighted-projective hypersurfaces in tame characteristic and hypersurfaces in products of projective spaces, and identify the precise exceptional windows in which the method fails.
\end{abstract}

\section{Introduction}

A classical problem in the geometry of hypersurfaces is to understand when their automorphism groups are finite. For a projective scheme \(X\) over a field \(k\), the Lie algebra of the automorphism group scheme is \[ \Lie(\Aut_{X/k}) \cong H^0(X,T_X). \] Thus the vanishing of \(H^0(X,T_X)\) implies that \(\Aut_{X/k}\) is discrete, by which we mean \'etale over \(k\).
 When the automorphism group scheme is of finite type, this also gives finiteness.
Finiteness is the property we are most concerned with in this paper. However, discreteness is the natural statement in full generality: in the Calabi--Yau case a smooth variety can have a discrete but infinite automorphism group, as already occurs for smooth quartic K3 surfaces. We therefore formulate our general results in terms of discreteness, and obtain finiteness where additional geometry supplies it.

The general type case is classical: by a theorem of Matsumura \cite{Matsumura:1963}, the automorphism group scheme of a variety of general type is finite although may be non-reduced in characteristic $p>0$.
Our main interest is therefore in Fano and Calabi--Yau hypersurfaces.
For hypersurfaces in projective space, there are two classical approaches to the vanishing of \(H^0(X,T_X)\).
Matsumura and Monsky \cite{MatsumuraMonsky:1963} use the conormal sequence and the Euler sequence on projective space.
Their argument uses the Euler relation, which introduces restrictions on the characteristic; see \cite[Corollary 3.5]{Huybrechts:2023} for a short account.
By contrast, the earlier method of Kodaira and Spencer \cite{KodairaSpencer:1958} proceeds through restriction and Poincar\'e residue sequences for differential forms.
Kodaira and Spencer were concerned with the non-existence of non-zero global vector fields rather than with automorphism groups themselves.

It is this second method that we develop here. Katz and Sarnak \cite[Chapter 11.6]{KatzSarnak:1999}, following an observation attributed there to Gabber, noted that the Kodaira--Spencer argument is algebraic and works in arbitrary characteristic.
The essential input is the vanishing of certain cohomology groups of twisted differential forms on the ambient projective space.
For a smooth hypersurface \(X\subset Y\), the restriction and Poincar\'e residue sequences reduce the vanishing of \(H^0(X,T_X)\) to a finite collection of cohomological vanishings on \(Y\). These involve twisted differential forms, precisely the setting of Bott-type vanishing.
The same argument applies to smooth Cartier divisors in smooth Deligne--Mumford stacks.
The hypotheses depend only on the ambient space and the hypersurface class, so they give a uniform statement for every smooth member of the corresponding linear system.
Our main criterion also allows us to pass to the coarse moduli space.

\begin{theorem}[Discreteness criterion] \label{theorem:main-theorem}
Let \(k\) be a field and let \(\cY\) be a smooth, separated Deligne--Mumford stack over \(k\) of dimension \(n\geq 2\).
Let \(i:\cX\hookrightarrow\cY\) be a smooth effective Cartier divisor.
For \(q\geq 0\), write \[\cL_q \coloneqq \omega_{\cY}^{\otimes -1}\otimes \cO_{\cY}(-(q+1)\cX).\]
Let \(\pi_X:\cX\to X\) be the coarse moduli space. 
Suppose that \(\pi_X\) is an isomorphism outside a closed subset of \(X\) of codimension at least \(2\) and that for every \(q\geq0\) we have
\[H^q\left(\cY,\Omega_{\cY}^{n-q-1}\otimes\cL_q\right)=H^q\left(\cY,\Omega_{\cY}^{n-q-2}\otimes\cL_q\right)=0,\] where \(\Omega_{\cY}^r=0\) for \(r<0\).
Then
\[H^0(X,T_X)=0.\]
\end{theorem}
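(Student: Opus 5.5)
First I reduce to the stack. Let \(U\subset X\) be the open locus over which \(\pi_X\) is an isomorphism; its complement has codimension at least \(2\). The statement implicitly needs \(X\) normal for the extension step. This holds because \(\cX\) is smooth, so \(X\) has quotient singularities and is normal. Then \(T_X=\mathcal{H}om(\Omega_X,\cO_X)\) is reflexive, hence \(S_2\), and sections extend over codimension \(2\). Thus \(H^0(X,T_X)\hookrightarrow H^0(U,T_U)=H^0(\pi_X^{-1}U,T_{\cX})\hookrightarrow H^0(\cX,T_{\cX})\). The last map is injective because \(T_{\cX}\) is locally free on the smooth, hence reduced and normal, stack \(\cX\), and \(\pi_X^{-1}U\) is dense. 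So it suffices to show \(H^0(\cX,T_{\cX})=0\).

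Next I convert vector fields into forms. Since \(\dim\cX=n-1\), we have \(T_{\cX}\cong\Omega_{\cX}^{n-2}\otimes\omega_{\cX}^{-1}\). Adjunction gives \(\omega_{\cX}=(\omega_{\cY}\otimes\cO_{\cY}(\cX))|_{\cX}\), so \(T_{\cX}\cong\Omega^{n-2}_{\cX}\otimes\cL_0|_{\cX}\) with \(\cL_0=\omega_\cY^{-1}(-\cX)\). It therefore suffices to prove \(H^0(\cX,\Omega_{\cX}^{n-2}\otimes\cL_0)=0\).

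The main step is an inductive ladder along the two exact sequences of Kodaira--Spencer type, valid for a smooth divisor in a smooth DM stack since they are étale local. The first is the conormal sequence twisted by a line bundle \(\cL\):
\[0\to\Omega^{r-1}_{\cX}\otimes\cO(-\cX)\otimes\cL\to\Omega^r_{\cY}|_{\cX}\otimes\cL\to\Omega^r_{\cX}\otimes\cL\to0.\]
The second is the restriction sequence
\[0\to\Omega^r_{\cY}\otimes\cL(-\cX)\to\Omega^r_{\cY}\otimes\cL\to\Omega^r_{\cY}|_{\cX}\otimes\cL\to0.\]
From the first sequence, \(H^q(\cX,\Omega^{n-2-q}_{\cX}\otimes\cL_q)\) injects into \(H^{q+1}(\cX,\Omega^{n-3-q}_{\cX}\otimes\cL_{q+1})\), provided \(H^q(\cX,\Omega^{n-2-q}_{\cY}|_{\cX}\otimes\cL_q)=0\), since \(\cL_q(-\cX)=\cL_{q+1}\). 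The restriction sequence bounds this group between \(H^q(\cY,\Omega^{n-2-q}_\cY\otimes\cL_q)\) and \(H^{q+1}(\cY,\Omega^{n-2-q}_\cY\otimes\cL_{q+1})\). These are exactly the two hypotheses, the second being the \(\Omega^{n-(q+1)-1}\) condition at index \(q+1\).

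Iterating from \(q=0\) up to \(q=n-2\), each step pushes the group to a lower form degree. The process terminates at \(H^{n-1}(\cX,\Omega^{-1}\otimes\cdot)=0\), where I treat \(\Omega^{-1}=0\) and note that the final conormal term is zero. The main obstacle I expect is bookkeeping the indices, so that the hypotheses used are exactly the stated ones with the correct twists \(\cL_q\) versus \(\cL_{q+1}\). A secondary point is checking that the exact sequences and adjunction hold on DM stacks; both are étale local, and cohomology is taken on the stack. Tameness plays no role here.
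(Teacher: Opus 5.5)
Your descending ladder is correct, and for the main step it takes a different route from the paper. The paper builds $\Omega^p_\cY(\log\cX)$ on the stack by descending log structures along an \'etale atlas. It then runs the induction through the residue sequence and the log restriction sequence $0\to\Omega^p_\cY(\log\cX)(-\cX)\to\Omega^p_\cY\to i_*\Omega^p_\cX\to0$, with auxiliary vanishings $\mathbf{A}(q)$ of log forms on $\cY$. You use the exterior-power conormal sequence and $-\otimes\cO_\cX$ instead, so your auxiliary groups $H^q(\cX,\Omega^{n-2-q}_\cY|_\cX\otimes\cL_q)$ live on $\cX$.

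Both arguments read the same filtration $\Omega^p_\cY(-\cX)\subset\Omega^p_\cY(\log\cX)(-\cX)\subset\Omega^p_\cY$. The paper passes through the middle term; you pass through the quotient $\Omega^p_\cY|_\cX$. Both consume exactly the same hypotheses: $\mathbf{B}(q)$ for $0\le q\le n-2$ and $\mathbf{D}(q)$ for $1\le q\le n-1$. Your version gains a real simplification. Your two sequences are canonical and exist on any smooth DM stack for free, so the log-descent construction becomes unnecessary. The paper's version stays on the ambient stack and reproduces Kodaira--Spencer and Katz--Sarnak literally. You are also right that tameness plays no role: both ladders stop because sheaves of negative-degree forms vanish.

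The reduction to the stack, however, is wrong as written. Density of $\pi_X^{-1}U$ and local freeness of $T_\cX$ show that the \emph{restriction} map $H^0(\cX,T_\cX)\to H^0(\pi_X^{-1}U,T_\cX)$ is injective. That is the opposite of what you need. Knowing $H^0(\cX,T_\cX)=0$ says nothing about vector fields on $\pi_X^{-1}U\cong U$ unless they extend to $\cX$.

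If density sufficed, your chain would give $H^0(X,T_X)\hookrightarrow H^0(\cX,T_\cX)$ with no codimension hypothesis at all. That fails for the tame root stack $\cX$ of $\PP^1$ at a set $D$ of three points. There $\pi_*T_\cX\cong T_{\PP^1}(-D)\cong\cO_{\PP^1}(-1)$, so $H^0(\cX,T_\cX)=0$ while $H^0(\PP^1,T_{\PP^1})\neq0$.

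The fix is to run the codimension-two argument on $\cX$ rather than on $X$. Since $\pi_X$ is a homeomorphism on underlying spaces, $\cX\setminus\pi_X^{-1}U$ has codimension at least $2$ in the smooth stack $\cX$. Sections of the locally free sheaf $T_\cX$ therefore extend across it, which you can check on an \'etale atlas. Hence restriction is an isomorphism and $H^0(X,T_X)\cong H^0(\cX,T_\cX)$. This is the content of the paper's lemma that pushforward along a canonical coarse moduli map preserves reflexivity. For your first arrow $H^0(X,T_X)\to H^0(U,T_U)$, torsion-freeness of $T_X$ already suffices.
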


Bott vanishing has received renewed attention in recent years, with results for Fano threefolds, certain GIT quotients, and varieties admitting suitable endomorphisms \cite{Totaro:2024a,Torres:2025,KawakamiTotaro:2025}.
We extend Bott vanishing to the tame canonical stacks of projective simplicial toric varieties by passing to their coarse spaces and applying vanishing for ample Weil divisors, following \cite{Bott:1957}, \cite{Danilov:1978}, \cite{Steenbrink:1993}    and \cite{Fujino:2007} for cases over any field.
We also use the vanishing of untwisted differential forms off the diagonal,
\[
H^q(\cY,\Omega_{\cY}^p)=0 \qquad \text{for }p\neq q.
\]
We call Bott vanishing together with this additional condition \emph{extended Bott vanishing}.
These results allow us to check many of the hypotheses of our criterion.
The recent examples beyond toric varieties suggest further applications of the Kodaira--Spencer method, provided the additional vanishings required by the criterion can also be established.

Our principal motivation comes from Fano hypersurfaces. Their automorphism groups are of finite type because automorphisms preserve the ample anticanonical class, so discreteness implies finiteness. Our method therefore gives uniform finiteness results for Fano hypersurfaces in a given linear system.

We were also motivated by K-stability: a K-stable Fano variety has finite automorphism group, so finiteness provides a basic necessary check for K-stability. More broadly, this raises the question of whether smooth or quasismooth Fano hypersurfaces of natural degree in toric varieties can nevertheless be K-unstable. Recent work of \cite{CampoDeVlemingGuerreiro:2026} on the K-stability of general weighted complete intersections provides further context for this question.

In the Calabi--Yau case our method gives discreteness uniformly, but we do not attempt to settle finiteness in complete generality. In dimension at least three, Grothendieck--Lefschetz arguments can often be combined with discreteness to obtain finiteness by controlling the Picard group. This approach is used for ordinary hypersurfaces (see \cite[Theorem 30]{Kollar:2019} for an exposition) and in the weighted-projective setting in Esser's work \cite{Esser:2024}. We defer a systematic treatment of finiteness for automorphism groups of Calabi--Yau hypersurfaces to future work.

Weighted projective spaces provide a natural first setting for these questions, which have been well studied over the complex numbers. For quasismooth well-formed weighted complete intersections over \(\mathbb C\), Przyjalkowski and Shramov \cite{PrzyjalkowskiShramov:2021} establish, under suitable dimension and canonical-class hypotheses, that the automorphism group is linear algebraic and that actions of its reductive subgroups extend to the ambient weighted projective space.
The first author \cite[Theorem 3.13]{Bunnett:2024} studied finiteness of the subgroup of Cartier hypersurface automorphisms induced by the ambient space.
The degree bound stated there required correction,  which is supplied by  \cite[Theorem 3.1]{Esser:2024}, which gives a precise criterion for finiteness of this subgroup in terms of the weights and degree and extended the result to any hypersurface, removing the Cartier condition.
In \textit{loc.\ cit.}, it is proven that every automorphism extends to the ambient space for quasismooth well-formed hypersurfaces of dimension at least three, and for surfaces with nontrivial canonical class \cite[Theorem 2.1]{Esser:2024}.
Together, these results explain both when ambient automorphisms capture the full group and when that group is finite.

Once finiteness is known, a further question is how large the automorphism group can be.
For smooth hypersurfaces in complex projective space in the range where the full automorphism group is finite,  \cite{YangYuZhu:2025} and  \cite{EsserLi:2025} determine the largest possible order for fixed dimension and degree.
Apart from explicitly described exceptions, the maximum is attained by the Fermat hypersurface.
In weighted projective space,  \cite{Esser:2024} also gives uniform upper bounds for the finite subgroup of automorphisms induced by the ambient space.
These results complement the question considered here, of when finiteness holds for every smooth or quasismooth member of a linear system.

Our method gives uniform discreteness under Esser's degree bounds in every tame characteristic, meaning here that the characteristic is coprime to the weights. In particular, we allow the characteristic to divide the hypersurface degree. This gives finiteness in the Fano case throughout this range of characteristics. We treat a well-formed quasismooth hypersurface as a smooth divisor in the associated smooth toric Deligne--Mumford stack, where the restriction and residue sequences are available, and use Bott-type vanishing to check the cohomological hypotheses.

\begin{theorem}[Weighted projective hypersurfaces]
Suppose that $X$ is a quasismooth degree $d$ well-formed hypersurface in a well-formed $\PP(a_0, \dots , a_n)$ with $n \geq 3$ and such that each $a_i$ is invertible in $k$.
We assume that $d \geq 2\cdot \max\{a_i\}$ with equality permitted only if the maximum weight occurs exactly once.

Then $\Aut_{X/k}$ is \'etale over \(k\) and finite if $X$ is Fano.
\end{theorem}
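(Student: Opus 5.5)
We will deduce the statement from Theorem~\ref{theorem:main-theorem}, applied with \(\cY=\mathcal{P}(a_0,\dots,a_n)\), the weighted projective stack \([(\AA^{n+1}\setminus 0)/\GG_m]\). Since each \(a_i\) is invertible in \(k\), every stabiliser \(\mu_{a_i}\) is tame. Hence \(\cY\) is a smooth, separated, tame Deligne--Mumford stack of dimension \(n\geq 3\), and its coarse space is \(\PP(a_0,\dots,a_n)\).

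Let \(F\) be the degree \(d\) equation of \(X\) and set \(\cX=\{F=0\}\subset\cY\). By quasismoothness \(\cX\) is smooth, and it is an effective Cartier divisor with \(\cO_{\cY}(\cX)=\cO(d)\). Its coarse space is \(X\). Well-formedness of \(X\) means that \(X\) contains no codimension-one component of the singular stratum of the ambient space. So the locus where the stabilisers of \(\cX\) are nontrivial has codimension at least \(2\) in \(X\), and \(\pi_X\) is an isomorphism away from it.

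Once \(H^0(X,T_X)=0\), we have \(\Lie(\Aut_{X/k})=0\), so \(\Aut_{X/k}\) is étale. In the Fano case \(\Aut_{X/k}\) preserves the ample class \(-K_X\), so it is of finite type, and being étale it is finite.

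The main work is the vanishing hypothesis. We have \(\omega_{\cY}=\cO(-\sum a_i)\), so \(\cL_q=\cO(\sum a_i-(q+1)d)\). We must show, for every \(q\ge0\) and \(p\in\{n-q-1,\,n-q-2\}\) with \(p\ge 0\),
\[
H^q\bigl(\cY,\Omega^p_{\cY}(m)\bigr)=0,\qquad m=\textstyle\sum a_i-(q+1)d .
\]
Cohomology of quasi-coherent sheaves on the tame stack equals the cohomology on the coarse space of the pushforward. These pushforwards are the reflexive sheaves \(\widehat\Omega^p(m)\) on \(\PP(a_0,\dots,a_n)\). We compute them with the generalised Euler sequence
\[
0\to\Omega^p_{\cY}\to\textstyle\bigwedge^p\bigl(\bigoplus_i\cO(-a_i)\bigr)\to\Omega^{p-1}_{\cY}\to0,
\]
together with the fact that \(H^j(\cY,\cO(e))\) vanishes for \(0<j<n\) and for all \(j\) in a range of \(e\). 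Alternatively we can use the Bott--Danilov--Steenbrink vanishing recalled above. We split into three cases.

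\begin{enumerate}
\item \emph{\(0<q<n\).} The groups \(H^q(\cY,\Omega^p(m))\) on weighted projective space are nonzero only on the diagonal \(p=q\) with \(m=0\). Our indices satisfy \(p+q\in\{n-1,n-2\}\). The diagonal cases \(p=q\) with \(m=0\) are excluded because \(d\ge 2\max a_i\) forces \(m<0\) once \(q\geq1\): indeed \((q+1)d\ge 4\max a_i\ge \sum a_i\) fails only in small cases, which need care. This is where extended Bott vanishing enters.
\item \emph{\(q=0\).} Here \(p=n-1\) or \(n-2\), and we need \(H^0(\Omega^{n-1}(\sum a_i-d))=0\) and \(H^0(\Omega^{n-2}(\sum a_i-d))=0\). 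Via the Euler sequence, sections of \(\Omega^{n-1}(e)\) are built from monomials of degree \(e+\sum_{i\in I}a_i\) for \(|I|=n-1\) (equivalently, from the complement of two weights). With \(e=\sum a_i-d\), the relevant degrees are \(a_j+a_k-d\) and \(a_j+a_k+a_l-d\). These are negative, or at worst zero, by the hypothesis \(d\ge2\max a_i\). The boundary case of equality is the delicate point: a constant section appears when \(d=a_j+a_k\) with \(a_j=a_k=\max a_i\). This is precisely ruled out by requiring the maximum weight to occur exactly once, although the Euler-sequence kernel condition may also kill it.
\item \emph{\(q\ge n\).} These groups vanish for dimension reasons. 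The cases \(q=n-1\) and \(q=n-2\), where \(p=0\), reduce to \(H^{n-1}\) and \(H^{n-2}\) of line bundles, which vanish since \(n\geq3\).
\end{enumerate}

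I expect the main obstacle to be this bookkeeping, especially the \(q=0\) term \(H^0(\Omega^{n-2}(\sum a_i - d))\) and the equality window \(d=2\max a_i\). There I would compute explicitly with the Koszul/Euler complex, identifying sections with twisted forms \(\sum f_{IJ}\,x_I\,dx_J\) annihilated by contraction with the Euler field. The alternative approach is to check via Danilov's description of \(H^0(\widehat\Omega^p(m))\) by lattice points in faces of the simplex, which is exactly where the multiplicity of the maximal weight matters.
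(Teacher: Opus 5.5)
Your proposal has a genuine gap in Case~1. On the diagonal $p=q$ the index is forced: $q=\frac{n-1}{2}$ for $\mathbf{D}$, and $q=\frac{n-2}{2}$ (so $n$ even) for $\mathbf{B}$.

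For $\mathbf{D}$ your conclusion $m<0$ is correct, but the reason is different from the one you give. We have $(q+1)d=\frac{n+1}{2}d\geq(n+1)\max a_i\geq\sum a_i$. Equality would need all weights equal and $d=2\max a_i$, which is excluded. The chain $(q+1)d\geq 4\max a_i\geq\sum a_i$ that you wrote fails in general once $n\geq 4$.

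For $\mathbf{B}$, however, $(q+1)d=\frac n2 d$ only dominates $n\max a_i$, while $\sum a_i$ can be as large as $(n+1)\max a_i$. So $m=0$, i.e.\ $d=\frac2n\sum a_i$, is compatible with all your numerical hypotheses. Take $\PP(3,4,4,4,5)$ with $d=10$: the space is well-formed, $d=2\max a_i$, and the maximum is attained once. Here $\cL_1\cong\cO_{\cY}$, so $\mathbf{B}(1)$ demands $H^1(\cY,\Omega^1_{\cY})=0$, whereas this group is one-dimensional. Extended Bott vanishing is silent exactly on the diagonal, so no vanishing theorem can close this case: the cohomological criterion genuinely fails there.

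The missing idea is to use quasismoothness of $X$ to show that in this window no quasismooth hypersurface of degree $d$ exists, so the case never arises. This is the paper's Lemma~\ref{lemma:annoying}.
\begin{enumerate}
\item Order the weights $a_0\leq\dots\leq a_n$ and put $\phi=d-2a_{n-1}$. The degree hypothesis gives $\phi>0$.
\item Combine $\sum a_i=\frac n2 d$ with $a_n\leq\frac d2$ and $a_i\leq a_{n-1}$ for $1\leq i\leq n-1$. This gives $(n-1)\phi\leq 2a_0$, hence $0<\phi<a_0$.
\item Then $d$ and every $d-a_j$ with $j\neq n-1$ lie strictly between consecutive multiples of $a_{n-1}$. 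So no monomial $x_{n-1}^m$ or $x_{n-1}^m x_j$ has degree $d$, and the Iano-Fletcher criterion fails at the vertex $P_{n-1}$.
\end{enumerate}

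There is a second, smaller error in Case~2. Your Euler-sequence bound for $\mathbf{B}(0)$ involves degrees $a_j+a_k+a_l-d$, and these are not $\leq 0$ in general. For example, $\PP(1,1,2,2,3)$ with $d=6$ gives $2+2+3-6=1$. So that inclusion does not prove vanishing.

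Instead, use $\Omega^{n-2}_{\cY}\otimes\cL_0\cong\bigwedge^2T_{\cY}(-d)$ together with the sequence $0\to T_{\cY}(-d)\to\bigwedge^2\bigl(\bigoplus_i\cO(a_i)\bigr)(-d)\to\bigwedge^2T_{\cY}(-d)\to 0$. Since $H^1(T_{\cY}(-d))=0$ for $n\geq 3$, the group $H^0(\bigwedge^2T_{\cY}(-d))$ is a quotient of $\bigoplus_{i<j}H^0(\cO(a_i+a_j-d))$. This vanishes because your hypothesis gives $a_i+a_j<d$ for all $i\neq j$. This is where uniqueness of the maximal weight actually enters. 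The paper does not spell out this $q=0$ computation either.
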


 The method also applies to more general toric varieties.
 In particular, Theorem \ref{thm:toric-fano-criterion} gives a criterion for hypersurfaces whose class is proportional to the anticanonical class of the ambient toric variety.

Hypersurfaces in products of projective spaces provide another important application, in which the hypersurface class is given by a multidegree. Here the Bott formula on each factor, together with the K\"unneth formula, allows us to translate the cohomological criterion into conditions on the multidegree and the dimensions of the factors. These conditions yield discreteness for every smooth hypersurface in the corresponding linear systems, and hence finiteness whenever the hypersurfaces are Fano.

\begin{theorem}[Hypersurfaces in products of projective spaces]
\label{thm:intro-products}
Let \(k\) be a field, and let
\[
X\subset Y=\prod_{j=1}^{r}\mathbb P_k^{n_j}
\]
be a smooth hypersurface of multidegree \((d_1,\ldots,d_r)\), where \(r\geq2\).
Suppose that either
\begin{enumerate}
\item \(n_j\geq2\) and \(d_j\geq2\) for every \(j\); or
\item \(r=2\), \(1\leq n_1<n_2\), \(d_1\geq1\), and \(d_2\geq2\), with
\[
(n_1,d_2)\neq(1,2)
\quad\text{or}\quad n_2\text{ even}.
\]
\end{enumerate}
Then \(H^0(X,T_X)=0.\)
In particular, the automorphism group scheme \(\Aut_{X/k}\) is discrete, and is finite if \(X\) is Fano.
\end{theorem}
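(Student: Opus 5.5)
We apply Theorem~\ref{theorem:main-theorem} with \(\cY=Y\), which is a smooth projective variety, so \(\pi_X\) is the identity and the codimension hypothesis holds trivially. Write \(n=\sum_j n_j\). Then \(\omega_Y^{-1}=\cO(n_1+1,\dots,n_r+1)\), and so
\[
\cL_q=\cO(m_1,\dots,m_r),\qquad m_j\coloneqq n_j+1-(q+1)d_j .
\]
It remains to show that \(H^q(Y,\Omega^{p}_Y\otimes\cL_q)=0\) for \(p=n-q-1\) and for \(p=n-q-2\), for every \(q\geq0\). Since \(\Omega^p_Y=\bigoplus_{\sum p_j=p}\boxtimes_j\Omega^{p_j}_{\PP^{n_j}}\), the K\"unneth formula gives
\[
H^q(Y,\Omega^p_Y\otimes\cL_q)\cong\bigoplus_{\sum p_j=p,\ \sum q_j=q}\ \bigotimes_j H^{q_j}\bigl(\PP^{n_j},\Omega^{p_j}(m_j)\bigr).
\]
So it suffices to show that every summand has a vanishing tensor factor.

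For each factor we use Bott's formula on \(\PP^N\), which holds over any field. The group \(H^i(\PP^N,\Omega^p(m))\) is nonzero only in the following cases:
\begin{itemize}
\item \(i=0\), with either \(m>p\) or \(m=p=0\);
\item \(i=N\), with either \(m<p-N\) or \(m=0\) and \(p=N\);
\item \(0<i<N\), with \(m=0\) and \(p=i\).
\end{itemize}

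Suppose a summand survives. For \(q\geq 1\) we have \(m_j\leq n_j+1-2d_j\), which is negative as soon as \(d_j\geq1\) and \(n_j<2d_j-1\); in general, increasing \(q\) drives every \(m_j\) down. Label each factor as one of three types:
\begin{itemize}
\item \emph{bottom}: \(q_j=0\), which forces \(m_j\geq p_j\geq0\);
\item \emph{top}: \(q_j=n_j\), which forces \(p_j\geq m_j+n_j\), or \(m_j=0\) with \(p_j=n_j\);
\item \emph{middle}: \(p_j=q_j\) with \(m_j=0\).
\end{itemize}
Summing the constraints gives
\[
n-q-1-\varepsilon=\sum_j p_j\quad(\varepsilon\in\{0,1\}),\qquad q=\sum_{\mathrm{top}}n_j+\sum_{\mathrm{mid}}q_j .
\]
This can be rewritten as
\[
\sum_{\mathrm{bottom}}(n_j-p_j)+\sum_{\mathrm{top}}(2n_j-p_j)+\sum_{\mathrm{mid}}(n_j-2q_j)...
\]
More usefully, it reads \(\sum_j(n_j-p_j)=q+1+\varepsilon\), to be combined with the lower bounds on \(p_j\).

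The key inequalities are as follows. On a bottom factor, \(p_j\leq m_j=n_j+1-(q+1)d_j\), so \(n_j-p_j\geq (q+1)d_j-1\). On a top factor, \(p_j\leq n_j\) together with \(p_j\geq m_j+n_j\) forces \(m_j\leq 0\), and \(n_j-p_j\geq0\). A middle factor needs \(m_j=0\), that is, \(n_j+1=(q+1)d_j\).

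Now consider case (1), where all \(d_j\geq2\) and all \(n_j\geq2\). If there is at least one bottom factor, it contributes \(n_j-p_j\geq 2q+1\). Summing against \(q+1+\varepsilon\), and using that the top factors contribute \(q=\sum_{\mathrm{top}}n_j+\dots\), we should reach a contradiction except for small \(q\). If there is no bottom factor, then \(q\geq\) the sum of the \(n_j\) over top factors plus the middle contributions, and the requirement \(\sum(n_j-p_j)=q+1+\varepsilon\leq 2\) contradicts \(q\geq\min n_j\geq 2\) unless \(q\) is small. I expect the small residual cases (\(q=0\) and \(q=1\)) to be settled directly using \(d_j\geq2\).

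Case (2) is the delicate part and the main obstacle. Here \(n_1=1\) is allowed and \(d_1=1\) is allowed, so factor 1 can have \(m_1=n_1+1-(q+1)\) equal to \(0\) or positive for several values of \(q\). I would treat \(r=2\) by hand, enumerating the options (bottom, top or middle) for the pair of factors and solving the resulting linear equations in \(p_1,p_2,q\). The inequality \(n_1<n_2\) should rule out the symmetric coincidences. The one surviving borderline configuration should be exactly \(n_1=1\), \(d_2=2\), where \(m_2=n_2+1-2(q+1)\) vanishes precisely when \(n_2\) is odd, producing a nonzero middle-degree term. This is why that configuration is excluded unless \(n_2\) is even.

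Once all the required vanishings hold, Theorem~\ref{theorem:main-theorem} gives \(H^0(X,T_X)=0\). It follows that \(\Aut_{X/k}\) has trivial Lie algebra and is therefore étale. If \(X\) is Fano, automorphisms preserve the ample class \(-K_X\), so \(\Aut_{X/k}\) is of finite type and hence finite.
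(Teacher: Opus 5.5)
\textbf{This is a genuine gap: the proposal does not prove the required vanishing.} You set up the same framework as the paper: the criterion with $\cY=Y$, K\"unneth, Bott's formula on each factor, a bottom/middle/top labelling, and the balance $\sum_j(n_j-p_j)=q+1+\varepsilon$. But the step that is the actual proof is never carried out, namely that every K\"unneth summand of $\mathbf{B}(q)$ and $\mathbf{D}(q)$ has a vanishing tensor factor. The inequalities you record cannot close even case (1).

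\emph{The bottom bound is too weak.} Your bound $n_j-p_j\geq(q+1)d_j-1$ comes from weakening Bott's condition to $m_j\geq p_j$. Take $\mathbf{B}(0)$ with two bottom factors of degree $2$: the bound allows $p_j=m_j=n_j-1$, so $\sum_j(n_j-p_j)=2=q+1+\varepsilon$ and nothing is contradicted. That summand vanishes only because $H^0(\PP^{n},\Omega^{p}(p))=0$ for $p\geq1$. You need the sharp dichotomy: $m_j>p_j$ gives $n_j-p_j\geq(q+1)d_j$, while $m_j=p_j$ forces $m_j=p_j=0$.

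\emph{The ``no bottom factor'' step is wrong as written.} $q+1+\varepsilon\leq2$ is not a constraint, and when all factors are middle one actually has $q<\min_j n_j$.

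\emph{You never use the key input.} Every factor with $m_j=0$ has $n_j=(q+1)d_j-1\geq2q+1$ (as $d_j\geq2$). With the sharp conditions, case (1) closes as follows.
\begin{itemize}
\item Top factors with $m_j=0$ are excluded, since there $q\geq n_j$.
\item A middle factor $j$ contributes $n_j-q_j\geq(q+1)+(q-q_j)$. This leaves $(q-q_j)+\sum_{i\neq j}(n_i-p_i)\leq1$, which no second factor satisfies.
\item With no middle factor, a bottom factor contributes at least $2q+1$. This forces $q=0$ and all factors bottom, with total at least $2r\geq4>1+\varepsilon$.
\item If all factors are top, then $\sum_jp_j<0$.
\end{itemize}
None of this is in your text, where the residual cases are left as ``I expect''.

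\textbf{Case (2) is only asserted.} Its exceptional set is exactly what must be derived; the paper does this by running through all nine bottom/middle/top combinations for the two factors. Your heuristic also misplaces the mechanism. The values $d_2=2$ and $n_2=2q+1$ give $m_2=0$ for every $n_1$. A summand survives because $n_1=1$ lets factor $1$ sit in top degree $q_1=p_1=n_1=1$, for example $H^1(\PP^1,\Omega^1(m_1))\otimes H^{q-1}(\PP^{n_2},\Omega^{q-1})\neq0$ in $\mathbf{B}(q)$. In the configurations with $m_2=0$, the balance forces $q_1\leq1$, and $q_1=1$ forces $p_1=n_1$. For $n_1\geq2$ this contradicts the middle-degree condition $p_1=q_1$.

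Likewise $n_1<n_2$ enters concretely, not through ``symmetric coincidences''. If factor $2$ is in top degree then $q\geq n_2$, so $m_1=0$ would need $n_1+1=(q+1)d_1\geq n_2+1$. Without this hypothesis the criterion does fail: $\mathbf{B}(2)$ fails for bidegree $(1,2)$ on $\PP^2\times\PP^2$, where $H^2(Y,\cO(0,-3))\neq0$.

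Finally, your weakened inequalities would report spurious survivors in case (2) as well, for instance in $\mathbf{B}(0)$ whenever $d_1=1$ and $d_2=2$. So the exceptional set cannot be read off from them. A proof has to use the sharp Bott conditions and go through the cases.
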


We also look at cases where the required vanishings fail, although the automorphism group may still be finite. An interesting case is given by hypersurfaces which are projectivisations of syzygy bundles. Here the general smooth hypersurface has finite automorphism group, but some special smooth hypersurfaces have infinite automorphism group. We study these families and how their automorphism groups vary.

Finally, we apply these results to moduli of quasismooth hypersurfaces in a fixed toric variety. Uniform vanishing of global vector fields is useful here because it controls automorphisms throughout the quasismooth locus. Under additional reductivity and discriminant hypotheses, we show that the quotient by ambient automorphisms preserving the hypersurface class is a smooth separated Deligne--Mumford stack with a normal affine coarse moduli space. This gives a toric analogue of Benoist's construction for hypersurfaces in projective space \cite{Benoist:2013}.

The paper is organised as follows.
In Section 2 we recall the toric stack constructions and the vanishing results we need.
Section 3 develops the Kodaira--Spencer method and proves the main cohomological criterion.
In Section 4 we apply it to toric hypersurfaces whose class is proportional to the anticanonical class, and to weighted projective hypersurfaces. Section 5 treats hypersurfaces in products of projective spaces and examines some families where the criterion fails. Section 6 gives the application to moduli.

\section*{Acknowledgments}
 The authors thank MATH+ for supporting Caroline Namanya’s research visit to Technische Universität Berlin as a Hanna Neumann Fellow, funded by the Deutsche Forschungsgemeinschaft (DFG, German Research Foundation) under Germany's Excellence Strategy – The Berlin Mathematics Research Center MATH+ (EXC-2046/2, project ID: 390685689). 
 The second author would also like to thank the Institut für Mathematik at Technische Universität Berlin for its hospitality.
 
\section*{Notation and conventions}

Throughout, $k$ is an arbitrary field.
All toric varieties are normal and split: their tori are isomorphic over $k$ to a power of $\GG_m$.
We write $Y_{\sm}$ for the smooth locus of $Y$ over $k$ and
$Y_{\sing}=Y\setminus Y_{\sm}$.

For a smooth scheme or Deligne--Mumford (DM) stack $\cY$, we write
\[
\Omega^p_{\cY}=\bigwedge^p\Omega^1_{\cY/k},
\qquad
T_{\cY}
=\mathcal{H}om_{\cO_{\cY}}
  (\Omega^1_{\cY/k},\cO_{\cY}).
\]
When $\cY$ has pure dimension $n$, its canonical bundle is $\omega_{\cY}=\Omega^n_{\cY}$.
We set $\Omega^0_{\cY}=\cO_{\cY}$ and $\Omega^p_{\cY}=0$ for $p<0$; the same degree conventions apply to logarithmic forms.
For a coherent sheaf $\cF$, we write $h^q(Y,\cF)=\dim_k H^q(Y,\cF)$ whenever this dimension is finite.

For a coherent sheaf $\cF$ we set
\[\cF^{[p]} = \left(\bigwedge^p\cF\right)^{\vee\vee},\]
the reflexive hull of the $p^{th}$ exterior power.

For a projective scheme $X$, $\Aut_{X/k}$ denotes its automorphism group scheme; we call it discrete if it is \'etale over $k$.

\section{Toric orbifolds}

In this section we provide background on simplicial toric varieties and their canonical smooth toric stacks.
There are singular versions of the restriction and Poincaré residue short exact sequences; however, they either work in reduced generality, as in \cite{Steenbrink:1993}, or are more delicate and involve the mixed Hodge structure, as in \cite{BatyrevCox:1994}.
For this reason, we work with smooth toric DM stacks where these do hold.

\subsection{Background on toric geometry}

We recall the quotient construction of a toric variety, originally due to Cox \cite{Cox:1995}; we refer to \cite{ACampoHausenSchroer:2002} over arbitrary fields.
We fix a projective simplicial toric variety $Y$ over a field $k$ corresponding to a complete simplicial fan \(\Sigma\).

Let
\[S = k[x_0 , \dots , x_r] = \bigoplus_{\alpha \in \Cl(Y)}S_\alpha\]
be its Cox ring, where the variables are indexed by the rays $\Sigma(1) = \{\rho_0, \dots , \rho_r\}$.
The grading defines an action of the diagonalisable $k$-group scheme $D = \Hom(\Cl(Y),\GG_m)$ on $\AA^{r+1}$.

Moreover, there is a codimension 2 closed subset $Z  \subseteq \AA^{r+1}$, write $U = \AA^{r+1}-Z$, such that $\faktor{U}{D} \cong Y$ and is a geometric quotient.

We recall the construction of canonical toric orbifolds and their coarse moduli spaces from \cite[Section 4.2]{FantechiMannNironi:2010}.

\begin{definition}
Define $\cY := \left[\faktor{U}{D}\right]$ to be the smooth toric DM stack of $Y$ and $\pi :\cY \to Y$ its coarse moduli space.
We assume always that $\cY$ is tame; equivalently, the orders of its geometric stabilisers are invertible in \(k\).
\end{definition}

Then $\cY$ is a toric orbifold, that is, has generically trivial stabilisers, and is a canonical stack of $Y$ in the following sense as in \cite[Definition 4.4]{FantechiMannNironi:2010}.

\begin{definition}
    Let \(\cY\) be an irreducible \(n\)-dimensional smooth Deligne-Mumford stack.
    Let \(\pi : \cY \to Y\) be the coarse moduli space.
    The stack \(\cY\) will be called canonical if the locus where \(\pi\) is not an isomorphism has codimension at least \(2\).
\end{definition}

We now recall some basics on hypersurfaces in toric varieties.
Let \(Y\) be a projective simplicial toric variety, let \(\pi\colon \mathcal Y\longrightarrow Y\) be its canonical toric stack, and let \(X\subset Y\) be an effective Weil divisor.
The divisor \(X\) determines an effective Cartier divisor \(\cX \coloneqq \overline{\pi^{-1}(\bigl(X\cap Y_{\sm}\bigr))} \subset\cY\), whose coarse moduli space is \(X\).
We denote the induced coarse moduli map by \(\pi_X\colon\cX\longrightarrow X.\)

\begin{definition}
\label{definition:quasismooth-toric-hypersurface}
We say that \(X\subset Y\) is \emph{quasismooth} if the induced hypersurface \(\mathcal X\subset\mathcal Y\) is smooth.
\end{definition}

This is equivalent to the definition via the Jacobian criterion and via $V$-manifolds given in \cite{BatyrevCox:1994}.
Note that the definition does not imply that \(\mathcal X\to X\) is a canonical stack of \(X\): the induced stack may still have stack structure in codimension one.
Looking at this behaviour in weighted projective spaces leads to the notion of well-formed subvarieties \cite{Przyjalkowski:2024} which will be important later on. 

\begin{definition}[Well-formed hypersurface]
\label{definition:well-formed-hypersurface}
Let \(X\subset Y\) be an effective Weil divisor.
We say that \(X\) is \emph{well-formed in \(Y\)} if
\[\codim_X \bigl(X\cap Y_{\mathrm{sing}}\bigr)\geq 2.\]
\end{definition}

Equivalently, \(X\) is well-formed if it meets the stacky locus of the canonical stack \(\cY\) only in codimension at least \(2\) within \(X\).

\begin{lemma}
\label{lemma:well-formed-coarse-map}
Let \(X\subset Y\) be an irreducible, quasismooth Weil divisor, with induced hypersurface stack \(\cX\subset\cY\).
Then the following conditions are equivalent:
\begin{enumerate}
    \item \(X\) is well-formed in \(Y\);
    \item the coarse moduli map \(\pi_X\colon\mathcal X\to X\) is an isomorphism outside a closed subset of codimension at least \(2\) in \(X\), that is, $\cX$ is the canonical stack of $X$;
    \item \(\pi_X\) is an isomorphism in codimension one.
\end{enumerate}
\end{lemma}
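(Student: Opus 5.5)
The plan is to compare both conditions with the locus where the coarse map \(\pi\colon\cY\to Y\) fails to be an isomorphism. For the canonical toric stack this locus is exactly \(Y_{\sing}\). On \(Y_{\sm}\) the stabilisers of the torus-quotient construction are trivial, so \(\pi\) is an isomorphism over \(Y_{\sm}\). Conversely, over a point of \(Y_{\sing}\) the stabiliser is a nontrivial finite group acting on a smooth chart with quotient the singular germ, so \(\pi\) is not an isomorphism there. This holds because \(\cY\) is canonical: by Chevalley–Shephard–Todd there are no pseudo-reflections in codimension one, so a nontrivial stabiliser yields a genuine quotient singularity. I would treat this identification \(\{\pi \text{ not iso}\}=Y_{\sing}\) as the first step, using the local description \([\AA^n/G]\to\AA^n/G\) from \cite{FantechiMannNironi:2010}.

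Next I would relate \(\pi_X\) to \(\pi\). Since \(\cX\) is the closure of \(\pi^{-1}(X\cap Y_{\sm})\), over \(Y_{\sm}\) the map \(\pi_X\) is simply the restriction of the isomorphism \(\pi\). Hence the non-isomorphism locus of \(\pi_X\) is contained in \(X\cap Y_{\sing}\), which gives (1)\(\Rightarrow\)(2). The implication (2)\(\Rightarrow\)(3) is immediate.

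For (3)\(\Rightarrow\)(1), argue by contradiction. Suppose \(X\cap Y_{\sing}\) contains a divisor \(D\subset X\). Take a general point \(x\) of \(D\) and a point \(\xi\in\cX\) over it. The stabiliser \(G_\xi\) in \(\cY\) is nontrivial, because \(x\in Y_{\sing}\) and we use the first step. I would then show that the stabiliser of \(\xi\) in \(\cX\), which equals \(G_\xi\) since \(\cX\subset\cY\) is a closed substack, is nontrivial at every point of the divisor \(\pi_X^{-1}(D)\). A DM stack whose stabilisers are nontrivial along a divisor is not isomorphic to its coarse space at the generic point of that divisor, because the coarse map is an isomorphism exactly where the stabilisers are trivial. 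This contradicts (3).

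The main obstacle is the first step, and in particular the equality rather than an inclusion. A quotient by a nontrivial group can still be smooth when the group is generated by pseudo-reflections. I would use that \(\cY\) has no stack structure in codimension one, together with the explicit toric charts \([\AA^n/G]\) with \(G\) the finite abelian group \(N/N_\sigma\) acting diagonally without pseudo-reflections, to conclude that a nontrivial stabiliser forces a singular point of \(Y\). Tameness ensures the local model \([\AA^n/G]\) is valid and that the coarse map commutes with base change, so closed substacks such as \(\cX\) have the expected coarse spaces.
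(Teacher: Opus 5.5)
Your proposal is correct and follows the paper's proof. The paper likewise reduces everything to the fact that the non-isomorphism locus of \(\pi\colon\cY\to Y\) is exactly \(\pi^{-1}(Y_{\sing})\), which it cites from \cite[Remark~4.5]{FantechiMannNironi:2010} rather than re-deriving via pseudo-reflection-free diagonal charts; it then reads off that the non-isomorphism locus of \(\pi_X\) lies over exactly \(X\cap Y_{\sing}\), which your stabiliser comparison for the closed substack \(\cX\subset\cY\) spells out. (Minor wording point: Chevalley--Shephard--Todd/Serre turns the absence of pseudo-reflections, guaranteed by canonicity, into ``nontrivial stabiliser \(\Rightarrow\) singular point''; the reverse direction is automatic since \(\cY\) is smooth.)
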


\begin{proof}
By \cite[Remark~4.5]{FantechiMannNironi:2010}, the isomorphism locus of \(\pi\colon\mathcal Y\to Y\) is precisely \(\pi^{-1}(Y_{\sm})\).
Hence the image of the non-isomorphism locus of \(\pi_X\) is
    \[X\cap Y_{\sing}.\]
Thus \(\pi_X\) is an isomorphism in codimension one if and only if
\[\codim_X\bigl(X\cap Y_{\sing}\bigr)\geq 2.\]
Finally, quasismoothness means that \(\mathcal X\) smooth, so the last condition is precisely the definition of \(\mathcal X\) being canonical.
\end{proof}

\begin{remark}
\label{remark:weak-strong-well-formedness}
There are two related notions of well-formedness in the literature on weighted complete intersections; see \cite{Przyjalkowski:2024}.
Throughout this paper, \emph{well-formed} will be the stronger of the two notions:
    \[\codim_X(X\cap Y_{\mathrm{sing}})\geq2.\]
This is the condition we will need, since it is equivalent to the induced coarse moduli map \(\cX\to X\) being an isomorphism in codimension one.
Weak well-formedness does not ensure this.
\end{remark}

\subsection{Sheaves on toric stacks and varieties}

First we note that for canonical stacks, the pushforward of a reflexive sheaf is again reflexive.
The following is certainly well known, but we include a proof as we do not know a reference.

\begin{lemma}\label{lemma:pushforward-is-reflexive}
Suppose that $\cX$ is a smooth Noetherian DM stack over $k$ and $\pi: \cX \to X$ a scheme coarse moduli space which is an isomorphism outside a set of codimension at least 2.
For any reflexive $\cF \in \Coh(\cX)$ the pushforward $\pi_*\cF \in \Coh(X)$ is reflexive.
Moreover, $\pi_* : \Pic(\cX) \rightarrow \Cl(X)$ is an isomorphism.
\end{lemma}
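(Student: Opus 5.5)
The plan rests on two standard facts. First, on a normal Noetherian scheme a coherent sheaf is reflexive if and only if it is torsion-free and has the $S_2$-type extension property across closed subsets of codimension at least $2$. Second, $\cX$ is smooth, hence normal. From the second fact, $X$ is normal: $\cO_X=\pi_*\cO_{\cX}$, and invariants of a normal ring under a finite (étale-locally) group action are normal. Fix a closed subset $Z\subset X$ with $\codim_X Z\geq 2$ such that $\pi$ is an isomorphism over $U=X\setminus Z$, and set $\cU=\pi^{-1}(U)\cong U$ and $j\colon U\hookrightarrow X$. I will show that $\pi_*\cF\cong j_*\bigl((\pi_*\cF)|_U\bigr)$. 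Since $j_*$ of a coherent reflexive sheaf on $U$ is reflexive when the complement has codimension at least $2$ and $X$ is normal, this gives reflexivity. (Indeed, $(\pi_*\cF)|_U\cong\cF|_{\cU}$ is reflexive on $U$ because $\cU\cong U$.)

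For the key step, I use that $\pi_*$ is left exact and commutes with flat base change on $X$, and I work on an affine open $V\subset X$. I need two properties.

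\emph{Torsion-freeness of $\pi_*\cF$.} A section of $\pi_*\cF$ over $V$ is a section of $\cF$ over $\pi^{-1}(V)$. Because $\cF$ is reflexive, hence torsion-free on the integral stack, a section vanishing on the dense open $\pi^{-1}(V\cap U)$ vanishes.

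\emph{Extension of sections.} Given a section over $V\cap U$, I view it as a section of $\cF$ over $\pi^{-1}(V)\setminus\pi^{-1}(Z)$. Since $\pi$ is quasi-finite and proper with finite fibres, $\pi^{-1}(Z)$ has codimension at least $2$ in $\cX$. Reflexive sheaves on the smooth (hence normal) stack $\cX$ satisfy Hartogs extension across codimension $2$; this can be checked on an étale atlas, where codimension is preserved and reflexivity is étale-local. Hence the section extends uniquely over $\pi^{-1}(V)$.

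Together, these give $\Gamma(V,\pi_*\cF)=\Gamma(V\cap U,\pi_*\cF)$, that is, $\pi_*\cF=j_*j^*\pi_*\cF$. Coherence of $\pi_*\cF$ holds because $\pi$ is a proper coarse moduli map, tame or not, by Keel--Mori/Abramovich--Vistoli.

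For the Picard statement, a line bundle is reflexive of rank $1$, so $\pi_*$ sends $\Pic(\cX)$ into rank-one reflexive sheaves, i.e. $\Cl(X)$. I will check that this map is a homomorphism by comparing on $U$: $(\pi_*(L\otimes M))|_U=(\pi_*L\otimes\pi_*M)|_U$, and the reflexive product in $\Cl(X)$ is determined by its restriction to $U$. Since $\Cl(X)\cong\Pic(U)$ (restriction is an isomorphism by normality and the codimension condition) and $\Pic(\cX)\to\Pic(\cU)=\Pic(U)$, the map $\pi_*$ factors as the composite
\[
\Pic(\cX)\to\Pic(\cU)\cong\Pic(U)\cong\Cl(X).
\]
Restriction $\Pic(\cX)\to\Pic(\cU)$ is injective because $L$ is recovered as the reflexive extension $\iota_*(L|_{\cU})$, by Hartogs on $\cX$. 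It is surjective because any line bundle on $\cU$ extends: take a coherent extension, then its reflexive hull; on a smooth stack a rank-one reflexive sheaf is invertible, since this is étale-local and holds on regular schemes (UFD local rings).

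I expect the main obstacle to be the codimension bookkeeping on the stack: that $\pi^{-1}(Z)$ has codimension at least $2$ in $\cX$, and that Hartogs extension and "rank-one reflexive implies invertible" descend from an étale atlas. I would handle this by choosing an étale presentation $W\to\cX$, noting that the composite $W\to X$ is quasi-finite so dimensions of preimages of $Z$ drop by at least $2$, and using that reflexivity and extension of sections are étale-local and satisfy étale descent.
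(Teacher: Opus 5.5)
Your proposal is correct. For reflexivity you follow the paper's argument: torsion-freeness plus Hartogs extension for $\pi_*\cF$, obtained by pulling the codimension-$2$ set back to $\cX$ and applying Hartogs to the reflexive sheaf $\cF$ on the smooth stack. The pulled-back set still has codimension at least $2$ because $\pi$ is a homeomorphism with finite fibres.

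There is one organisational difference here. You prove $\pi_*\cF=j_*j^*\pi_*\cF$ for the specific non-isomorphism locus and then quote that $j_*$ of a reflexive sheaf is reflexive. The paper instead checks the extension property directly for an arbitrary closed $Z$ of codimension at least $2$. That direct check does not use the hypothesis that $\pi$ is an isomorphism in codimension one at all.

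You genuinely diverge on the Picard statement. The paper simply cites \cite[Remark 4.5.2]{FantechiMannNironi:2010}, whereas you prove it via
\[
\Pic(\cX)\cong\Pic(\mathcal U)\cong\Pic(U)\cong\Cl(X).
\]
Injectivity of the first map comes from Hartogs on $\cX$. Surjectivity comes from taking a reflexive extension and using that rank-one reflexive sheaves on a regular stack are invertible, since this is \'etale-local and the local rings are UFDs. For the last isomorphism, make explicit that $\Cl(U)=\Pic(U)$ uses that $U\cong\mathcal U$ is smooth, hence locally factorial.

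Your argument is self-contained and shows exactly where the codimension hypothesis on $\pi$ is needed. The cost is some stack bookkeeping (codimension of $\pi^{-1}(Z)$, \'etale-local nature of reflexivity, extension of coherent sheaves) that the citation hides.
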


\begin{proof}
Since $\cX$ is a smooth DM stack, $X$ is a normal irreducible scheme and so to prove reflexivity we must prove that $\pi_*\cF$ is torsion free and has the Harthogs extension property \cite[Tag 0AVB]{stacks}.
Torsion free is immediate, so we must only justify the extension property.

Consider an open $W \subseteq X$ and a closed subset $Z \subseteq W$ of codimension $\geq 2$.
Then \[\Gamma(W\setminus Z , \pi_*\cF) = \Gamma(\pi^{-1}(W) \setminus \pi^{-1}(Z) , \cF) = \Gamma(\pi^{-1}(W),\cF) = \Gamma(W,\pi_*\cF) \enspace ,\]
    since $\pi$ is a homeomorphism on geometric points.

    Applying this to a line bundle $\cL \in \Pic (\cX)$ says that $\pi_*\cL$ is a reflexive rank 1 sheaf.
    That this defines the claimed isomorphism, see \cite[Remark 4.5.2]{FantechiMannNironi:2010}.
\end{proof}

\begin{corollary}\label{cor:pushforward-p-forms}
    Consider a line bundle $\cL \in \Pic(\cX)$ and let $L = \pi_*\cL$, then
    \[\pi_*\left( \Omega_{\cX}^p \otimes \cL\right) \cong \Omega^{[p]}_{X}(L).\]
\end{corollary}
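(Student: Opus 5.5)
The plan is to show that both sides are reflexive sheaves on \(X\) and that they agree on the open set where \(\pi\) is an isomorphism, whose complement has codimension at least \(2\). Throughout, \(\cX\) and \(\pi\colon\cX\to X\) are as in Lemma~\ref{lemma:pushforward-is-reflexive}: \(\cX\) is smooth and \(\pi\) is an isomorphism outside a closed subset of codimension at least \(2\).

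\textbf{Step 1: the left side is reflexive.} Since \(\cX\) is smooth, \(\Omega^1_{\cX}\) is locally free. Hence \(\Omega^p_{\cX}\otimes\cL\) is locally free, and in particular reflexive. Lemma~\ref{lemma:pushforward-is-reflexive} then shows that \(\pi_*(\Omega^p_{\cX}\otimes\cL)\) is reflexive on \(X\).

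\textbf{Step 2: the right side is reflexive.} The sheaf \(\Omega^{[p]}_X(L)\) is understood as \(\bigl(\Omega^{[p]}_X\otimes L\bigr)^{\vee\vee}\). It is reflexive by construction.

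\textbf{Step 3: comparison on the good locus.} Let \(V\subseteq X\) be the open set over which \(\pi\) is an isomorphism. Its complement has codimension at least \(2\). Since \(\cX\) is smooth, \(V\) is smooth, so \(\Omega^1_V\) is locally free and \(\Omega^{[p]}_X|_V=\Omega^p_V\). Also \(L|_V=\pi_*\cL|_V\cong\cL|_{\pi^{-1}V}\) is a line bundle, hence \(\Omega^{[p]}_X(L)|_V\cong\Omega^p_V\otimes L|_V\). On the other side, since \(\pi^{-1}(V)\to V\) is an isomorphism, \(\pi_*(\Omega^p_{\cX}\otimes\cL)|_V\cong\Omega^p_V\otimes L|_V\). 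The differentials are identified compatibly because \(\Omega^1\) is functorial under isomorphisms. So the two sheaves are isomorphic over \(V\).

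\textbf{Step 4: extension.} Let \(j\colon V\hookrightarrow X\) be the inclusion. Any reflexive sheaf \(\mathcal G\) on the normal scheme \(X\) satisfies \(\mathcal G\cong j_*(\mathcal G|_V)\), by the Hartogs property already used in the proof of Lemma~\ref{lemma:pushforward-is-reflexive}. Applying this to both sheaves gives
\[\pi_*\bigl(\Omega^p_{\cX}\otimes\cL\bigr)\cong j_*\bigl(\Omega^p_V\otimes L|_V\bigr)\cong\Omega^{[p]}_X(L).\]

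The only point needing care is Step 3. One must make precise that \(L\) is invertible on \(V\) and that the twisted reflexive sheaf \(\Omega^{[p]}_X(L)\) restricts there to the honest tensor product \(\Omega^p_V\otimes L|_V\). This holds because reflexive hulls commute with restriction to open subsets, and tensoring with a line bundle preserves reflexivity. The rest follows formally from Lemma~\ref{lemma:pushforward-is-reflexive}.
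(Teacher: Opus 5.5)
Your proposal is correct and follows the same route as the paper. Both sheaves are reflexive on the normal scheme \(X\), the left-hand side by Lemma~\ref{lemma:pushforward-is-reflexive}. They agree over the open set where \(\pi\) is an isomorphism, and a reflexive sheaf is recovered from its restriction to an open subset whose complement has codimension at least \(2\). The paper states this in one line; your Steps 3 and 4 just make explicit the identification on the good locus and the Hartogs extension that the paper leaves implicit.
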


\begin{proof}
Since both sheaves are reflexive and $\pi$ is an isomorphism on an open with complement of codimension at least 2, they are  isomorphic.
\end{proof}

Lastly we have the following result which says that the construction  of sheaves on simplicial toric varieties can be expressed purely through this toric orbifold construction.

In \cite{Cox:1995}, it is showed that the category of quasicoherent sheaves on a simplicial toric variety may be constructed as the Serre quotient of the category of graded modules over the Cox ring, which is  termed the "homogeneous coordinate ring", just as Serre proved for projective spaces and  is made explicit by \cite{AurouxKatzarkovOrlov:2008} for weighted projective stacks and spaces.

We show that this construction can be expressed via the coarse moduli space.
The category of graded $S$-modules is equivalent to the category of $D$-equivariant sheaves on $\AA^{r+1}$.
Thus a graded $S$-module $M$ defines a quasicoherent sheaf on $\cY$. We denote it $\widetilde{M}_{\cY} \in \Qcoh(\cY)$.

Using Cox's construction, we also get a sheaf $\widetilde{M}_Y \in \Qcoh(Y)$.

\begin{theorem}\cite[Theorem 3.2, Proposition 3.3]{Cox:1995}
\label{theorem:cox-sheaves-on-toric-var}
    If $Y$ is a simplicial toric variety, then every quasicoherent sheaf on $Y$ is of the form $\widetilde{M}_Y$ for some graded $S$-module $M$.
    Moreover, if $M$ is finitely generated, then $\widetilde{M}_Y$ is coherent and every coherent sheaf arises in this way.
\end{theorem}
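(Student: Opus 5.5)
Since this theorem is quoted from \cite{Cox:1995}, the proof I have in mind recasts Cox's argument in the stack language of this section, and it goes through the canonical stack \(\pi\colon\cY\to Y\). The starting point is the equivalence between graded \(S\)-modules and \(D\)-equivariant quasicoherent sheaves on \(\AA^{r+1}\). Restricting to \(U\), this gives \(\widetilde{M}_{\cY}\), and I expect Cox's sheaf to agree with it after pushing forward:
\[
\widetilde{M}_Y \cong \pi_*\widetilde{M}_{\cY}.
\]
To check this I will compare sections on the standard affine cover. For a cone \(\sigma\in\Sigma\), let \(x^{\hat\sigma}\) be the product of the variables not indexed by rays of \(\sigma\), and let \(U_\sigma=\{x^{\hat\sigma}\neq0\}\). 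Cox defines \(\Gamma(Y_\sigma,\widetilde{M}_Y)=(M_{x^{\hat\sigma}})_0\), the degree-zero part of the localisation. On the other hand, \(\pi^{-1}(Y_\sigma)=[U_\sigma/D]\), and the sections of \(\widetilde{M}_{\cY}\) there are the \(D\)-invariants of \(M_{x^{\hat\sigma}}\), which is the same degree-zero part. These identifications are compatible with localisation, so they glue to the isomorphism above.

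For surjectivity onto \(\Qcoh(Y)\), I take a quasicoherent sheaf \(F\) on \(Y\) and pull it back to \(\cY\), equivalently to a \(D\)-equivariant sheaf on \(U\). I then push it forward along the open immersion \(j\colon U\hookrightarrow\AA^{r+1}\). Since \(\AA^{r+1}\) is affine, this is a graded module
\[
M=\bigoplus_{\alpha\in\Cl(Y)}\Gamma(U,\pi^*F\otimes\cO(\alpha))^{\mathrm{inv}}.
\]
By construction \(\widetilde{M}_{\cY}\cong\pi^*F\). Because \(\cY\) is tame, \(\pi_*\) is exact on quasicoherent sheaves and \(\pi_*\pi^*F\cong F\), as \(\pi_*\cO_{\cY}=\cO_Y\) and the projection formula applies. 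Combining this with the identification above gives \(\widetilde{M}_Y\cong F\).

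For the coherent statement, the first direction is easy: if \(M\) is finitely generated, each \((M_{x^{\hat\sigma}})_0\) is finitely generated over \((S_{x^{\hat\sigma}})_0\), so \(\widetilde{M}_Y\) is coherent. Conversely, if \(F\) is coherent, \(M\) above need not be finitely generated. I will replace it by a finitely generated graded submodule \(M'\) that surjects locally onto sections on each \(U_\sigma\); this works because there are finitely many cones and each chart is Noetherian. Then \(\widetilde{M'}_Y\cong F\) by exactness of the sheafification functor, since \(M/M'\) has zero localisations \((M/M')_{x^{\hat\sigma}}\).

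I expect the main obstacle to be the claim \(\pi_*\pi^*F\cong F\) for non-locally-free \(F\). A non-reflexive \(F\) cannot be handled by Lemma~\ref{lemma:pushforward-is-reflexive}. I would prove it locally: on \(Y_\sigma=\operatorname{Spec}(A^G)\) with \(A\) a polynomial ring and \(G\) a finite diagonalisable group of order invertible in \(k\), it reduces to \((N\otimes_{A^G}A)^G=N\) for an \(A^G\)-module \(N\). This holds because \(A^G\to A\) splits via the Reynolds operator, which exists by tameness. Alternatively, one can avoid the issue by citing Cox's original proof, which works directly on the charts.
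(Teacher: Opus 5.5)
Your argument is correct, but it runs in the opposite direction from the paper. The paper gives no proof of this statement. It imports it from Cox, whose argument is carried out on $Y$ itself without stacks, and then uses it in Proposition~\ref{prop:Cox-sheaves-toric-orbifold} to conclude that $\pi_*$ is essentially surjective. The chart computation $\Gamma(Y_\sigma,\pi_*\widetilde{M}_{\cY})=(M_{x^{\hat\sigma}})_0$ in that proof is exactly your first step.

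You instead get essential surjectivity directly from $\pi_*\pi^*F\cong F$, with the canonical choice $M=\Gamma(U,p^*F)$, where $p\colon U\to Y$ is the quotient map. You then read off Cox's theorem from the chart comparison. In your version Proposition~\ref{prop:Cox-sheaves-toric-orbifold} becomes self-contained rather than resting on the citation, and you also get an explicit preimage $\pi^*F$ under $\pi_*$.

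Your reduction of $\pi_*\pi^*F\cong F$ to $(N\otimes_{A_0}A)_0=N\otimes_{A_0}A_0=N$ is right. Your coherent case is also correct: you choose homogeneous generators of the finitely generated $S_{x^{\hat\sigma}}$-modules $M_{x^{\hat\sigma}}=\Gamma(U_\sigma,p^*F)$ and clear denominators, which is the right fix for $M$ itself failing to be finitely generated.

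Two small remarks. First, projection onto the degree-zero part is a Reynolds operator for any diagonalisable group, so tameness is not needed at that step. In fact simpliciality only serves to make $\cY$ Deligne--Mumford; the computation itself uses only the $\Cl(Y)$-grading, $p^{-1}(Y_\sigma)=U_\sigma$ and $Y_\sigma=\operatorname{Spec}(S_{x^{\hat\sigma}})_0$. Second, the claim that $(M_{x^{\hat\sigma}})_0$ is finitely generated over $(S_{x^{\hat\sigma}})_0$ when $M$ is finitely generated deserves a line of justification. It follows from the standard finiteness of invariants of a finitely generated module under a linearly reductive group.
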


Moreover, Cox's theorem says that every sheaf has this form when $Y$ is simplicial.
For toric orbifolds, we shall show that these two sheaves are related by the pushforward along $\pi$.

\begin{proposition}\label{prop:Cox-sheaves-toric-orbifold}
    With the notation above, we have that $\pi_*\widetilde{M}_\cY = \widetilde{M}_Y$.
    Moreover, the functors $\pi_* : \Qcoh(\cX) \to \Qcoh(Y)$ and $\pi_*: \Coh(\cY) \to \Coh(Y)$ are essentially surjective.
\end{proposition}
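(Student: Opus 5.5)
We have to prove two things: $\pi_*\widetilde{M}_\cY = \widetilde{M}_Y$, and that $\pi_*$ is essentially surjective on quasicoherent and on coherent sheaves. The plan is to compare the two constructions affine-locally on $Y$ and then deduce essential surjectivity from Cox's theorem (Theorem~\ref{theorem:cox-sheaves-on-toric-var}).

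\textbf{Step 1: affine cover.} For each cone $\sigma\in\Sigma$, let $x^{\hat\sigma}=\prod_{\rho_i\notin\sigma(1)}x_i$. Then $U$ is covered by the $D$-stable affine opens
\[
U_\sigma=\{x^{\hat\sigma}\neq 0\}=\operatorname{Spec} S_{x^{\hat\sigma}},
\]
and their images $Y_\sigma=U_\sigma/D=\operatorname{Spec}\bigl(S_{x^{\hat\sigma}}\bigr)_0$ are the standard affine opens of $Y$. Moreover $\pi^{-1}(Y_\sigma)=[U_\sigma/D]$, since $U_\sigma$ is the preimage of $Y_\sigma$ under $U\to Y$.

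\textbf{Step 2: local computation of $\pi_*$.} By definition, $\widetilde{M}_\cY$ restricted to $[U_\sigma/D]$ is the $D$-equivariant quasicoherent sheaf attached to the graded $S_{x^{\hat\sigma}}$-module $M_{x^{\hat\sigma}}$. Its pushforward to the coarse space $Y_\sigma$ is the sheaf of $D$-invariants, i.e. the module $\bigl(M_{x^{\hat\sigma}}\bigr)_0$. Indeed, for $\mathcal V=[\operatorname{Spec}A/D]$ with $D$ diagonalisable, $\Gamma(\mathcal V,\widetilde N)=N^D=N_0$, and the coarse space is $\operatorname{Spec}A_0$. This is exactly Cox's definition of $\widetilde{M}_Y$ on $Y_\sigma$.

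\textbf{Step 3: gluing.} Both identifications are compatible with restriction to $Y_\sigma\cap Y_\tau=Y_{\sigma\cap\tau}$. On each side the restriction map is localisation at $x^{\hat{(\sigma\cap\tau)}}$ followed by taking the degree-zero part, so the local isomorphisms glue to $\pi_*\widetilde{M}_\cY\cong\widetilde{M}_Y$. This part is essentially bookkeeping.

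\textbf{Step 4: essential surjectivity.} By Theorem~\ref{theorem:cox-sheaves-on-toric-var}, every $\cF\in\Qcoh(Y)$ is isomorphic to $\widetilde{M}_Y$ for some graded module $M$. By Steps 2 and 3, $\cF\cong\widetilde{M}_Y\cong\pi_*\widetilde{M}_\cY$. If $\cF$ is coherent, we may choose $M$ finitely generated. Then $\widetilde{M}_\cY$ is coherent, since it is the descent of the coherent sheaf $\widetilde M|_U$ on $U$. (The statement writes $\Qcoh(\cX)$; we read this as $\Qcoh(\cY)$.)

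\textbf{Main obstacle.} The step needing care is the claim in Step 2 that invariants compute $\pi_*$. This requires $D$-invariants to be exact and the coarse space of $[\operatorname{Spec}A/D]$ to be $\operatorname{Spec}A^D$. Both hold because $D$ is diagonalisable, hence linearly reductive in any characteristic: a $D$-module is simply a graded module, and invariants are the degree-zero part. So this should go through even without the tameness hypothesis. I would cite the standard good moduli space property of quotients by linearly reductive groups (as in Alper's work) or prove it directly from the grading.
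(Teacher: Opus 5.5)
Your proposal is correct and follows the paper's proof. Both restrict to the charts $[U_{x^{\hat\sigma}}/D]\to Y_\sigma$, identify $\pi_*$ with taking $D$-invariants, i.e.\ the degree-zero part $(M_{x^{\hat\sigma}})_0=\widetilde{M}_Y(Y_\sigma)$, and then apply Cox's theorem for essential surjectivity. Your gluing check on $Y_{\sigma\cap\tau}$, the coherence of $\widetilde{M}_\cY$ for finitely generated $M$, and your reading of $\Qcoh(\cX)$ as $\Qcoh(\cY)$ fill in details the paper leaves implicit.
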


\begin{proof}
Suppose that $\Sigma$ is the complete simplicial fan and pick a cone $\sigma \in \Sigma$.
Then the corresponding open set is denoted $Y_\sigma$ and the coarse moduli map restricted to this open set it
\[\pi : \left[ \faktor{U_{x^{\hat{\sigma}}}}{D} \right] \to  \faktor{U_{x^{\hat{\sigma}}}}{D} = Y_\sigma \]
is simply the geometric quotient and $x^{\hat{\sigma}}$ is a monomial, see \cite[Theorem 2.1]{Cox:1995} for details.

On this open set, the pushforward map is just taking invariants of the $D$-equivariant sheaf.
In the case of diagonalisable groups, this is just the $0^{\text{th} }$-graded piece of the corresponding module.
That is, given a graded $S$-module $M$, we have that
\[\pi_*\widetilde{M}_{\cY}(Y_\sigma) = \left( M_{x^{\hat{\sigma}}} \right)_0 = \widetilde{M}_Y(Y_\sigma) \enspace .\]
Thus the sheaves are actually equal.
Surjectivity is given by Theorem \ref{theorem:cox-sheaves-on-toric-var}.
\end{proof}

See \cite[Proposition 2.3]{AurouxKatzarkovOrlov:2008} for more details in the weighted projective stacks case.

\subsection{Bott vanishing on toric orbifolds}

Following \cite{KawakamiTotaro:2025}, we make the following definition and extend it to smooth DM stacks.

\begin{definition}

Let $X$ be a projective variety over a field. We say that $X$ satisfies \emph{Bott vanishing for ample Weil divisors} if for every ample Weil divisor $A$, we have that
\[H^q(X,\Omega^{[p]}_X(A))=0\]
for every $q>0, p\geq 0$.

For a smooth DM stack $\cX$ over a field $k$, we say that $\cX$ satisfies \emph{Bott vanishing} if for every ample divisor $\cA$, we have that
    \[H^q(\cX,\Omega^{p}_\cX(\cA))=0\]
    for every $q>0, p\geq 0$.
\end{definition}

We define $\cA$ on $\cX$ to be ample if $\pi_*\cA \in \Cl(X)$ is ample, as in \cite{AbramovichHassett:2011}.

Fujino proved the following  \cite[Proposition 3.2]{Fujino:2007}.

\begin{theorem}\label{theorem:Fujino-Bott-vanishing-toric-varieties}
    Given a projective toric variety $X$ over a field $k$.
    Then $X$ satisfies Bott vanishing for ample Weil divisors.
\end{theorem}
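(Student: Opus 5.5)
The plan is to reduce to the characteristic-free description of $H^q(X,\Omega^{[p]}_X(A))$ through the toric combinatorics, following Danilov's approach as made algebraic over arbitrary fields.

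\textbf{Step 1: grading by the torus.} Let $T$ be the torus of $X$ with character lattice $M$, and write the fan as $\Sigma$. Since $A$ is a torus-invariant Weil divisor up to linear equivalence, the sheaf $\Omega^{[p]}_X(A)$ is $T$-equivariant. Hence its Čech cohomology with respect to the affine cover $\{X_\sigma\}$ is $M$-graded. For each $m\in M$, the graded piece of $\Gamma(X_\sigma,\Omega^{[p]}_X(A))$ is described by Danilov's formula: it is either $0$ or the subspace $\bigwedge^p V_{\sigma,m}\subseteq \bigwedge^p (M\otimes k)$. Here $V_{\sigma,m}$ is spanned by the rays $\rho\in\sigma(1)$ with $\langle m,u_\rho\rangle = -a_\rho$, where $A=\sum a_\rho D_\rho$; more precisely, it is the annihilator of those $u_\rho$. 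This piece is nonzero exactly when $\langle m,u_\rho\rangle\ge -a_\rho$ on $\sigma$. The formula holds over any field, because reflexive differentials on a normal affine toric variety are computed by the torus-invariant description; this is Danilov's lemma, valid over $\mathbb{Z}$.

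\textbf{Step 2: a combinatorial complex.} For fixed $m$, the degree-$m$ Čech complex becomes a complex of subspaces of $\bigwedge^p(M\otimes k)$ indexed by the cones $\sigma$ with $m\in P_\sigma$. Following Fujino (and Mustaţă's treatment via multiplication maps), one computes it by filtering by the faces $F$ of the polytope $P_A=\{m : \langle m,u_\rho\rangle\ge -a_\rho\}$ that contain $m$ in their relative interior. Ampleness of $A$ means that $P_A$ is a full-dimensional polytope whose normal fan is $\Sigma$, possibly with rational vertices. The higher cohomology then reduces to the cohomology of the following pieces:
\begin{enumerate}
\item for $m\in P_A$, a star-shaped, hence contractible, set of cones, which gives vanishing in positive degree;
\item for $m\notin P_A$, the set of cones whose dual face is visible from $m$, which is again contractible by convexity.
\end{enumerate}
The coefficients are the constant vector space $\bigwedge^p(M\otimes k)/\cdots$, and the fact that the coefficients vary with the face is handled by a Koszul-type filtration, as in Danilov. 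Consequently $H^q=0$ for $q>0$.

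\textbf{Alternative route.} I would instead use the Frobenius / multiplication-by-$\ell$ trick, which avoids the delicate combinatorics and is the approach of Fujino's paper. Take $\ell$ prime to everything relevant, together with the toric endomorphism $F_\ell$ induced by multiplication by $\ell$ on $N$. Then $F_\ell^*A\sim \ell A$, and $\Omega^{[p]}_X(A)$ is a direct summand of $(F_\ell)_*\Omega^{[p]}_X(\ell A)$, via the $M$-grading splitting $m\mapsto \ell m$. This gives an injection
\[
H^q\bigl(X,\Omega^{[p]}_X(A)\bigr)\hookrightarrow H^q\bigl(X,\Omega^{[p]}_X(\ell A)\bigr),
\]
and the target vanishes for $\ell\gg0$ by Serre vanishing, since $\ell A$ is ample and some multiple of it is Cartier. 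The subtlety is that $\ell A$ need not be Cartier. To handle this, choose $\ell\equiv 1$ modulo the index of $\Cl(X)/\Pic(X)$ on the relevant classes, and apply Serre vanishing to the family $\Omega^{[p]}(A+jN A)$ with $NA$ Cartier, where finitely many twists suffice.

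\textbf{Main obstacle.} The hardest step is making the splitting $\Omega^{[p]}_X(A)\to (F_\ell)_*\Omega^{[p]}_X(\ell A)\to \Omega^{[p]}_X(A)$ precise for reflexive sheaves on a non-smooth toric variety in all characteristics. I would check it on each affine $X_\sigma$ using Danilov's graded description from Step 1. On each chart, the map is the inclusion of graded pieces $m\mapsto \ell m$, and the retraction is projection onto the degrees divisible by $\ell$. Both maps are compatible with restriction between charts, and the characteristic plays no role because no division by $\ell$ occurs.
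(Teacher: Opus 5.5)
Your multiplication-map route is correct and is essentially the argument the paper relies on: the paper gives no proof of its own but cites Fujino's Proposition 3.2, which splits $\Omega^{[p]}_X(A)$ off $(F_\ell)_*\Omega^{[p]}_X(\ell A)$ chart by chart on the $M$-graded pieces and concludes by Serre vanishing, using that $F_\ell$ is finite so $H^q$ is unchanged by $(F_\ell)_*$ (a point worth stating explicitly). Your $\ell\equiv 1 \pmod N$ device is valid but unnecessary, since you can simply take $\ell$ divisible by $N$ so that $\ell A$ is already Cartier; the Danilov-style Step 2 is only a sketch and can be dropped.
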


Then as a corollary to the above theorem we get the following.

\begin{corollary}\label{cor:bott-vanishing-toric-orbifold}
    Suppose that $\pi : \cX \to X$ is a toric orbifold, then $\cX$ satisfies Bott vanishing.
\end{corollary}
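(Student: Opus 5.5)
The plan is to compute the cohomology on the stack by pushing forward to the coarse space $X$, where Theorem \ref{theorem:Fujino-Bott-vanishing-toric-varieties} gives Bott vanishing for ample Weil divisors. So fix an ample divisor $\cA$ on $\cX$, meaning that $A \coloneqq \pi_*\cO_\cX(\cA)$ is an ample Weil divisor class on $X$, and fix integers $q>0$ and $p\geq 0$. The goal is
\[
H^q(\cX,\Omega^p_\cX(\cA)) \cong H^q(X,\Omega^{[p]}_X(A)),
\]
after which the corollary follows at once from Fujino's theorem.

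\emph{Step 1: identify the pushforward.} The toric orbifold $\cX$ is the canonical stack of $X$, so $\pi$ is an isomorphism outside codimension $2$. By Lemma \ref{lemma:pushforward-is-reflexive}, $\cO_\cX(\cA)$ corresponds to the Weil divisor class $A$ under $\pi_*:\Pic(\cX)\xrightarrow{\sim}\Cl(X)$. Corollary \ref{cor:pushforward-p-forms} then gives
\[
\pi_*\bigl(\Omega^p_\cX\otimes\cO_\cX(\cA)\bigr)\cong\Omega^{[p]}_X(A).
\]

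\emph{Step 2: compare cohomology.} Use the Leray spectral sequence for $\pi$,
\[
E_2^{a,b}=H^a(X,R^b\pi_*\cF)\Rightarrow H^{a+b}(\cX,\cF).
\]
I claim $R^b\pi_*\cF=0$ for $b>0$ and every quasicoherent $\cF$. Since $\cX$ is tame, $\pi_*$ is exact on quasicoherent sheaves; this is the defining property of tame stacks in the sense of Abramovich–Olsson–Vistoli. Concretely, Zariski-locally on $X$ the map is $[U_{x^{\hat\sigma}}/D]\to U_{x^{\hat\sigma}}/D$ with $U_{x^{\hat\sigma}}$ affine. Pushforward there is taking $D$-invariants, which is the degree-zero part of a graded module, as in the proof of Proposition \ref{prop:Cox-sheaves-toric-orbifold}. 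For a diagonalisable group $D$, taking the degree-zero part is exact, even without a tameness hypothesis. Hence $R^b\pi_*\cF=0$ for $b>0$, the spectral sequence degenerates, and $H^q(\cX,\cF)\cong H^q(X,\pi_*\cF)$.

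\emph{Step 3: conclude.} Combining Steps 1 and 2 gives
\[
H^q(\cX,\Omega^p_\cX(\cA))\cong H^q(X,\Omega^{[p]}_X(A)),
\]
and the right-hand side vanishes by Theorem \ref{theorem:Fujino-Bott-vanishing-toric-varieties}.

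The main point requiring care is Step 2, the vanishing of the higher direct images. The argument is that cohomology on the quotient stack of an affine scheme by a diagonalisable group is computed by invariants, which form an exact functor. One must also check that the cover by the opens $Y_\sigma$ computes the Čech/Leray comparison correctly. This holds because their preimages are the quotient stacks above and the opens form an affine cover of the separated scheme $X$.
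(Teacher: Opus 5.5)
Your proposal is correct and follows the paper's proof: you use that $\pi_*$ is exact to get $H^q(\cX,\Omega^p_\cX(\cA))\cong H^q(X,\pi_*(\Omega^p_\cX(\cA)))$, identify the pushforward with $\Omega^{[p]}_X(A)$ via Corollary~\ref{cor:pushforward-p-forms}, and then apply Fujino's theorem. Your local argument (invariants on the affine charts $[U_{x^{\hat\sigma}}/D]$ are degree-zero parts, and taking them is exact for a diagonalisable group) just spells out the cohomological affineness of $\pi$ that the paper states in one line.
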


\begin{proof}
    Since $\pi$ is a coarse moduli map for a tame stack, it is cohomologically affine and hence we have that
    \[H^q(\cX, \Omega_\cX^{p}\otimes \cL) = H^q(X,\pi_*(\Omega_\cX^{[p]}\otimes \cL)) \cong H^q(X,\Omega_X^{[p]}(L)))\]
    by Corollary \ref{cor:pushforward-p-forms}.
    The result follows.
\end{proof}

\begin{remark}
    In \cite{KawakamiTotaro:2025}, the authors prove that a wider class of varieties satisfy Bott vanishing for ample Weil divisors.
    This means our method developed in Section \ref{section:KS-method} may be able to be applied to a wider class of varieties and their canonical stacks.

    Our proof of Corollary \ref{cor:bott-vanishing-toric-orbifold} passes to the coarse space and applies known vanishing there.
    It would be interesting to know if Fujino's proof works directly for smooth toric stacks and their stacky fans as defined in \cite{BorisovChenSmith:2005}.
\end{remark}

We record one other vanishing theorem we will need in the sequel.
For a proof, one can consult \cite[Theorem 9.3.2]{CoxLittleSchenck:2011}.

\begin{theorem}
\label{theorem:vanishing-with-no-twisting}
    Suppose that $X$ is a projective simplicial toric variety.
    Then if $p \neq q$, it holds
    \[H^q(X,\Omega_X^{[p]})=0 \enspace .\]
\end{theorem}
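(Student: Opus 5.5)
The plan is to compute \(H^q(X,\Omega^{[p]}_X)\) through a resolution by structure sheaves of torus-orbit closures, which gives vanishing for \(q>p\). The range \(q<p\) then follows from Serre duality. All statements will be made over an arbitrary field \(k\), using only the combinatorics of the complete simplicial fan \(\Sigma\) with lattice \(M\).

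\textbf{Step 1 (Ishida complex).} On the toric variety \(X\) there is an exact complex
\[
0\to\Omega^{[p]}_X\to \textstyle\bigwedge^p M_k\otimes\cO_X\to\bigoplus_{\rho\in\Sigma(1)}\bigwedge^{p-1}(\rho^\perp)_k\otimes\cO_{V(\rho)}\to\bigoplus_{\sigma\in\Sigma(2)}\bigwedge^{p-2}(\sigma^\perp)_k\otimes\cO_{V(\sigma)}\to\cdots,
\]
which stops after \(p+1\) terms. Here \(V(\sigma)\) is the orbit closure of \(\sigma\) and \(M_k=M\otimes k\).

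Exactness can be checked on each affine chart \(U_\sigma\) and in each \(M\)-degree \(m\in\sigma^\vee\cap M\). In degree \(m\), the left-hand term is \(\bigwedge^p\) of the span of the face of \(\sigma^\vee\) containing \(m\), and the remaining terms form a Koszul-type complex indexed by the faces of \(\sigma\). This is Danilov's and Ishida's argument, and it is purely combinatorial, so it holds over any field. I expect this step to be the main technical point. I would first try to quote Danilov directly; failing that, I would redo the degreewise check.

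\textbf{Step 2 (acyclicity of the terms, giving \(q>p\)).} Each \(V(\sigma)\) is a projective toric variety, so \(H^i(V(\sigma),\cO)=0\) for \(i>0\) and \(H^0=k\). Hence the resolution in Step 1 is by \(\Gamma\)-acyclic sheaves. Consequently \(H^q(X,\Omega^{[p]}_X)\) is the \(q\)-th cohomology of the finite complex of vector spaces
\[
\textstyle\bigwedge^p M_k\to\bigoplus_\rho\bigwedge^{p-1}(\rho^\perp)_k\to\cdots .
\]
This complex has length \(p\), so \(H^q(X,\Omega^{[p]}_X)=0\) for all \(q>p\).

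\textbf{Step 3 (duality, giving \(q<p\)).} Pass to the canonical toric stack \(\pi:\cY\to X\). Assume \(\cY\) is tame, as in the paper's standing convention; if not, use instead that \(\Omega^{[p]}_X\) is Cohen–Macaulay on simplicial toric varieties (Danilov) together with Grothendieck duality on \(X\).

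By Corollary \ref{cor:pushforward-p-forms} and the cohomological affineness of \(\pi\), as in the proof of Corollary \ref{cor:bott-vanishing-toric-orbifold}, we have
\[
H^q(X,\Omega^{[p]}_X)=H^q(\cY,\Omega^p_\cY).
\]
Since \(\cY\) is smooth and proper, Serre duality holds on it, and \((\Omega^p_\cY)^\vee\otimes\omega_\cY\cong\Omega^{n-p}_\cY\). This gives
\[
H^q(\cY,\Omega^p_\cY)\cong H^{n-q}(\cY,\Omega^{n-p}_\cY)^\vee .
\]
Applying the identification above again, the right-hand side is \(H^{n-q}(X,\Omega^{[n-p]}_X)^\vee\). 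If \(q<p\), then \(n-q>n-p\), so this group vanishes by Step 2.

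Combining Steps 2 and 3 yields \(H^q(X,\Omega^{[p]}_X)=0\) for all \(p\neq q\).
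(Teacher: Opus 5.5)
Step 1 is not characteristic-free, as you claim it is. The degreewise check rests on two facts. The first is that on $U_\sigma$ the degree-$m$ piece of $\Omega^{[p]}_X$ is $\bigwedge^p(\sigma_m^\perp)_k$, where $\sigma_m=\sigma\cap m^\perp$. The second is that the Koszul-type complex over the faces of $\sigma_m$ is exact. Both need the primitive ray generators of every cone to stay linearly independent in $N\otimes k$, that is, $\operatorname{char}k$ must divide no cone multiplicity.

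In general the degree-$m$ piece is $\bigwedge^p$ of $\bigcap_{\rho\in\sigma_m(1)}\ker(u_\rho\otimes k)$, which can be strictly larger. For example, on $\PP(1,1,2)$ in characteristic $2$ the rays $(1,0)$ and $(-1,-2)$ of the singular cone coincide mod $2$. So $M_k\otimes\cO_X\to\bigoplus_\rho\cO_{D_\rho}$ is not surjective in degree $0$ at the singular point, and the complex is not exact there.

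This is not only a defect of the method: without tameness the statement is false. Take the toric cubic surface $X=\{xyz=w^3\}\subset\PP^3$ in characteristic $3$, with $L_x=\{x=w=0\}$ and $L_y=\{y=w=0\}$. The form $d\log(x/y)=dx/x-dy/y$ is regular along the three boundary lines, because $\operatorname{div}(x/y)=3L_x-3L_y$ has all multiplicities divisible by $3$. It is nonzero on the torus (in the chart $w=1$, $x$ and $y$ are coordinates). Hence $H^0(X,\Omega^{[1]}_X)\neq0$, with $q=0\neq p=1$.

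So drop the claim of arbitrary $k$, and drop the Cohen--Macaulay/Grothendieck-duality fallback in Step 3. Work throughout under the paper's standing assumption that $\cY$ is tame. With tameness, Step 1 holds. Take a basis $e_1,\dots,e_n$ of $M_k$ dual to a basis of $N_k$ that extends the ray generators of $\sigma_m$. Up to unit scalars, the degree-$m$ complex is then $\bigwedge^\bullet(\sigma_m^\perp)_k$ tensored with the two-term complexes $k\oplus ke_i\to k$, one for each ray of $\sigma_m$. It is therefore exact, with $H^0=\bigwedge^p(\sigma_m^\perp)_k$. Steps 2 and 3 then go through as written.

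With that restriction your proof is correct, and it takes a genuinely different route from the paper. The paper gives no argument; it simply cites \cite[Theorem 9.3.2]{CoxLittleSchenck:2011}, a result over $\CC$. You get $q>p$ from Ishida's resolution by the $\Gamma$-acyclic sheaves $\cO_{V(\sigma)}$. You get $q<p$ from Serre duality on the smooth proper tame stack $\cY$, using $\pi_*\Omega^p_{\cY}\cong\Omega^{[p]}_X$ and exactness of $\pi_*$. Your argument is self-contained and valid in every characteristic prime to the cone multiplicities. That is exactly the generality in which the paper uses this vanishing, for instance for weighted projective spaces whose weights are invertible in $k$. The complex-number citation does not by itself cover that case.
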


We dub this extra vanishing \emph{extended Bott vanishing}.

\begin{definition}
    We say that $X$ satisfies \emph{extended Bott vanishing} if it satisfies Bott vanishing for ample divisors and the vanishing given in Theorem \ref{theorem:vanishing-with-no-twisting}.

    The definition is analogous when $X$ is a stack.
\end{definition}

Extended Bott vanishing will be required for our method developed in Section \ref{section:KS-method}.

\subsection{Logarithmic Differentials on smooth DM stacks}\label{section:stacky-ses}

Here we state and prove the short exact sequences we need.
Both are doubtless known to experts and the proofs are applications of well-known results in the setting of logarithmic geometry \cite{EsnaultViehweg:1992} and \'etale descent.
However, since we have no exact citation, and to make the exposition as accessible as possible, we include proofs here.

Let $\cY$ be a smooth, separated DM stack over \(k\) and $\cX \subset \cY$ be an effective Cartier divisor, smooth over \(k\).
Suppose that \(U \to \cY\) is an \'etale atlas for \(\cY\) and denote by $D \coloneqq \cX \times_\cY U$ the Cartier divisor lying above $\cX$.
We write \(R=U\times_\cY U \rightrightarrows U\) for the groupoid presentation of $\cY$ with \'etale maps $s,t : R \to U$ and since $\cY$ is a separated DM stack, $R$ is a scheme.


\begin{remark}
Separatedness is used in this section only to ensure that the overlaps are schemes; it can be omitted by carrying out the same construction on \'etale scheme charts of \(R\).
We make the restriction to shorten arguments and since we are only interested in the sequel in the separated case.
\end{remark}

Then \(D_R \coloneqq s^*D = t^*D \subset R\) is also a smooth effective Cartier divisor in $R$.
Equip the pairs \((R,D_R)\) and \((U,D)\) with their divisorial log structures.
Since $D_R=s^*D=t^*D$, both maps are strict;
since their underlying maps are \'etale, they are log \'etale.

Then we consider $\Omega^1_U(\log D)$, the sheaf of differentials with logarithmic poles along $D$ (all over \(k\)) \cite[0FMU]{stacks}.

This then forms a sheaf on $\cY$.
Indeed, since $D$ is constructed from a divisor on $\cY$, we know that both groupoid maps can be considered as maps of log spaces $s,t:(R,D_R) \to (U,D)$.
Their relative logarithmic cotangent sheaves therefore vanish, and the short exact sequence of
\cite[Proposition 3.14(2)]{AbramovicEtAl:2013},
applied over $k$ with its trivial log structure, gives
the canonical isomorphisms
\[\beta_s : s^*\Omega^1_{U}(\log D) \xrightarrow{\cong}\Omega^1_{R}(\log D_R), \quad \beta_t : t^*\Omega^1_U(\log D) \xrightarrow{\cong}\Omega^1_R(\log D_R).\]
Thus we have
\[\varphi := \beta_t^{-1}\beta_s : s^*\Omega^1_{U}(\log D) \xrightarrow{\cong}t^*\Omega^1_{U}(\log D).\]
Moreover, the cocycle condition follows from the compatibility
$\beta_{f\circ g}=\beta_g\circ g^*\beta_f$
applied to the composites
\[
U\times_{\cY}U\times_{\cY}U
\xrightarrow{p_{ij}} R
\xrightarrow{s,t} U.
\]
Indeed, writing $q_i$ for the projection onto the $i$th factor,
we have $s\circ p_{ij}=q_i$ and $t\circ p_{ij}=q_j$, so that
\[
p_{ij}^*\varphi=\beta_{q_j}^{-1}\beta_{q_i}.
\]
Consequently,
\[
p_{23}^*\varphi\circ p_{12}^*\varphi
=\beta_{q_3}^{-1}\beta_{q_2}
 \beta_{q_2}^{-1}\beta_{q_1}
=\beta_{q_3}^{-1}\beta_{q_1}
=p_{13}^*\varphi.
\]

Hence by descent \cite[06WU]{stacks}, $\Omega^1_{U}(\log D)$ descends to a locally free sheaf on $\cY$ denoted by $\Omega^1_{\cY}(\log \cX)$.

We now prove that the Poincaré residue and restriction short exact sequences descend to the stack.
Denote the inclusion $i : \cX \hookrightarrow \cY$ and we let
\[\Omega_\cY^p(\log \cX) := \bigwedge^p  \Omega_\cY^1(\log \cX) \quad p \geq 0,\]
and declare the sheaves are $0$ if $p<0$.

\begin{lemma}\label{lemma:poincare-residue-ses}
    The following sequence is exact for every \(p\geq 0\).
    \begin{equation}
    0 \longrightarrow \Omega^{p}_\cY \longrightarrow \Omega^{p}_\cY(\log \cX) \longrightarrow i_{\ast}\Omega^{p-1}_\cX  \longrightarrow 0, 
\end{equation}
where $\Omega^{-1}_{\cX/k}=0$.
\end{lemma}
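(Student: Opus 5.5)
The plan is to construct the sequence on the atlas \(U\), check exactness there, and then descend along the groupoid \(R\rightrightarrows U\) exactly as was done for \(\Omega^1_\cY(\log\cX)\) above.

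\textbf{Step 1: the sequence on the atlas.} Since \(U\) is a smooth \(k\)-scheme and \(D\subset U\) is a smooth effective Cartier divisor, we have the classical Poincaré residue sequence
\[0\longrightarrow \Omega^p_U\longrightarrow \Omega^p_U(\log D)\xrightarrow{\ \mathrm{res}\ } j_*\Omega^{p-1}_D\longrightarrow 0,\]
where \(j\colon D\hookrightarrow U\) is the inclusion \cite{EsnaultViehweg:1992}. To be self-contained over an arbitrary field, I would verify exactness étale locally. Choose coordinates \(t_1,\dots,t_n\) with \(D=\{t_1=0\}\), possible because \(D\) is smooth. Then \(\Omega^p_U(\log D)\) is free on the forms \(dt_I\) with \(1\notin I\), together with \(\frac{dt_1}{t_1}\wedge dt_J\). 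The residue sends \(\alpha+\frac{dt_1}{t_1}\wedge\beta\) to \(\beta|_D\). Its kernel is exactly the set of forms with \(\beta\in t_1\Omega\), i.e.\ the regular forms, and it is visibly surjective. Independence of the choice of coordinates is standard. For \(p=0\) the sequence reads \(\cO_U\cong\cO_U\to 0\), consistent with the convention \(\Omega^{-1}=0\).

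\textbf{Step 2: compatibility with the descent data.} Descent requires the two maps on \(U\) to commute with the gluing isomorphisms \(\varphi\) over \(R\). Since \(s,t\) are étale and \(D_R=s^*D=t^*D\), we have canonical identifications \(s^*\Omega^p_U\cong\Omega^p_R\cong t^*\Omega^p_U\). The wedge powers \(\bigwedge^p\beta_s\) and \(\bigwedge^p\beta_t\) likewise identify the pullbacks of the log forms with \(\Omega^p_R(\log D_R)\). The pullbacks of \(j_*\Omega^{p-1}_D\) are identified with \(j_{R*}\Omega^{p-1}_{D_R}\), using flat base change for the closed immersion together with étaleness of \(D_R\to D\). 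Both the inclusion of forms and the residue are functorial for étale morphisms of log pairs, since the residue is defined by the local formula above, which is preserved under étale pullback of coordinates. Hence \(s^*\) and \(t^*\) of the sequence on \(U\) are both canonically isomorphic to the residue sequence on \((R,D_R)\). The gluing maps on each term are \(\beta_t^{-1}\beta_s\), so the maps in the sequence commute with them.

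\textbf{Step 3: descent and identification.} By descent of morphisms of quasi-coherent sheaves \cite[06WU]{stacks}, the sequence descends to a sequence of sheaves on \(\cY\). The three terms are identified as follows: the first descended term is \(\Omega^p_\cY\); the middle one is \(\bigwedge^p\Omega^1_\cY(\log\cX)=\Omega^p_\cY(\log\cX)\), because exterior powers commute with pullback; the last one is \(i_*\Omega^{p-1}_\cX\), because \(D\to\cX\) is an atlas for \(\cX\). Exactness can be checked after the faithfully flat pullback to \(U\), where it holds by Step 1.

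I expect the main obstacle to be Step 2, namely checking carefully that the residue map is intrinsic, i.e.\ independent of the local equation, and compatible with étale pullback. I would handle this through the intrinsic description of the residue: it is the map \(\Omega^p_U(\log D)\to\Omega^p_U(\log D)|_D\to\Omega^{p-1}_D\) obtained by contracting against the canonical residue \(\Omega^1_U(\log D)|_D\to\cO_D\). That canonical residue is itself the cokernel of \(\Omega^1_D\to\Omega^1_U(\log D)|_D\), which is manifestly functorial.
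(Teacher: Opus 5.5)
Your proposal is correct and follows essentially the same route as the paper: the scheme-level residue sequence on the atlas, compatibility of the inclusion and the residue with the strict \'etale maps \(s,t\) (checked via the same local formula \(\alpha+\frac{dt}{t}\wedge\eta\mapsto\eta|_D\)), the identification \(a^*i_*\Omega^{p-1}_{\cX}\cong j_*\Omega^{p-1}_D\) by base change, descent via \cite[06WU]{stacks}, and exactness checked on the atlas. Your local-coordinate proof of exactness and your intrinsic description of the residue (contraction against \(\Omega^1_U(\log D)|_D\to\cO_D\)) are sound elaborations of points the paper handles by citing \cite[Tag 0FMW]{stacks}.
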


\begin{proof}
Write $a:U\to\cY$ for the chosen \'etale atlas and $j:D\hookrightarrow U$ for the pullback of $i$.
The residue sequence for the smooth scheme pair $(U,D)$ is
\[
0 \longrightarrow \Omega^p_{U/k}
\longrightarrow \Omega^p_{U/k}(\log D)
\xrightarrow{\operatorname{Res}_D}
j_*\Omega^{p-1}_{D/k}
\longrightarrow 0;
\]
see \cite[Tag 0FMW]{stacks}.

We verify that its maps commute with strict \'etale pullback.
Compatibility of the inclusion of regular forms is immediate.
For residue, let $f:(V,E)\to(U,D)$ be an \'etale morphism with $E=f^*D$, and let $f_E:E\to D$ be the induced map.
Locally, if $t$ defines $D$, then
\[
\operatorname{Res}_D
\left(\alpha+\frac{dt}{t}\wedge\eta\right)
=\eta|_D
\]
for regular forms $\alpha$ and $\eta$.
Since $f^*t$ defines $E$, this formula gives
\[
\operatorname{Res}_E
\left(f^*\alpha+
\frac{d(f^*t)}{f^*t}\wedge f^*\eta\right)
=f_E^*(\eta|_D).
\]
Thus residue commutes with pullback under the canonical
identifications of logarithmic forms and of forms on
the divisors.

Applying this compatibility to $s,t:R\rightrightarrows U$,
the maps in the scheme sequence respect the descent data.
Moreover, base change for the closed immersion $i$ and
\'etale pullback of ordinary differentials give
\[
a^*i_*\Omega^{p-1}_{\cX/k}
\cong j_*\Omega^{p-1}_{D/k}.
\]
The sequence therefore descends to the displayed sequence
on $\cY$ by \cite[Tag 06WU]{stacks}.
Its exactness can be checked after pullback along the
\'etale surjective atlas $a$, where it is the scheme
residue sequence.
\end{proof}
\begin{lemma}\label{lemma:restriction-ses}
    The following sequence is exact for every \(p\geq0\).
    \begin{equation}
    0 \longrightarrow \cO_\cY(-\cX) \otimes \Omega^{p}_\cY(\log \cX) \longrightarrow \Omega^{p}_\cY \longrightarrow i_{\ast}\Omega^{p}_\cX \longrightarrow 0,
\end{equation}
\end{lemma}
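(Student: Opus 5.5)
I will follow the same template as the proof of Lemma \ref{lemma:poincare-residue-ses}: establish the sequence on the étale atlas $a\colon U\to\cY$ for the smooth pair $(U,D)$, check that its maps are compatible with strict étale pullback, and then descend along $s,t\colon R\rightrightarrows U$ using \cite[Tag 06WU]{stacks}. Exactness will be checked after pulling back along the étale surjection $a$.

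\textbf{Step 1: the scheme sequence.} On $U$ the sequence reads
\[
0\longrightarrow \cO_U(-D)\otimes\Omega^p_U(\log D)\longrightarrow \Omega^p_U\longrightarrow j_*\Omega^p_D\longrightarrow 0 .
\]
The second map is restriction of forms to $D$. It is surjective because $D\subset U$ is a smooth closed subscheme, so the conormal sequence $\cO_D(-D)\to\Omega^1_U|_D\to\Omega^1_D\to0$ is exact; taking $p$-th exterior powers gives surjectivity. For the kernel I work locally, where $D=\{t=0\}$ and $t$ is part of a system of étale coordinates $t,y_2,\dots,y_n$. The module $\Omega^p_U(\log D)$ is free on the forms $\frac{dt}{t}\wedge dy_J$ with $|J|=p-1$ and $dy_I$ with $|I|=p$. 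Multiplying by $t$ shows that $t\cdot\Omega^p_U(\log D)$ is the submodule of $\Omega^p_U$ generated by $dt\wedge dy_J$ and $t\,dy_I$. This is exactly the kernel of restriction to $D$, since $\Omega^p_D$ is free on the classes of $dy_I$. The first map is multiplication, that is, the natural map $\cO_U(-D)\otimes\Omega^p_U(\log D)\to\Omega^p_U$, which lands in regular forms. It is injective because it is injective generically and the source is locally free. Alternatively, one could cite the standard sequence in \cite{EsnaultViehweg:1992} or the Stacks Project.

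\textbf{Step 2: compatibility and descent.} Let $f\colon(V,E)\to(U,D)$ be strict étale, so that $E=f^*D$. Then the following identifications all agree with the descent isomorphisms $\beta$ constructed above:
\begin{itemize}
\item $f^*\cO_U(-D)=\cO_V(-E)$;
\item $f^*\Omega^p_U(\log D)\cong\Omega^p_V(\log E)$, via $\beta_f$;
\item $f^*\Omega^p_U\cong\Omega^p_V$;
\item $f^*j_*\Omega^p_D\cong j_{E*}\Omega^p_E$, by flat base change and étaleness of $f_E$.
\end{itemize}
Both maps in the sequence commute with pullback. For the multiplication map this is because $f^*t$ defines $E$. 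For restriction it is because restriction of forms commutes with pullback. Hence the sequence on $U$ is compatible with the descent data on $R$, and the cocycle conditions hold by the computation already done for $\varphi$. The sequence therefore descends to the displayed sequence on $\cY$. Here I use $a^*i_*\Omega^p_\cX\cong j_*\Omega^p_D$ as in the previous lemma, and $a^*\cO_\cY(-\cX)=\cO_U(-D)$.

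\textbf{Main obstacle.} The only step that needs care is the local identification of the kernel in Step 1, specifically the choice of étale coordinates adapted to $D$. Smoothness of $D$ over $k$ ensures that $t$ extends to such a system, since $dt$ is part of a basis of $\Omega^1_U$ near $D$. Everything else is formal descent, identical to the proof of Lemma \ref{lemma:poincare-residue-ses}.
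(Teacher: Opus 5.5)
Your proposal is correct and follows essentially the same route as the paper. Both arguments establish the restriction sequence for the smooth pair $(U,D)$ on an \'etale atlas, check that the multiplication and restriction maps commute with strict \'etale pullback, descend along $R\rightrightarrows U$, and check exactness on the atlas. The only difference is that you verify the scheme-level exactness by hand in \'etale coordinates adapted to $D$ (an argument that is correct), whereas the paper simply cites the Esnault--Viehweg restriction sequence.
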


\begin{proof}
For smooth scheme pairs, this is the restriction sequence of \cite[Properties 2.3(c)]{EsnaultViehweg:1992}.
The maps are multiplication
\[
\cO_U(-D)\otimes\Omega^p_{U/k}(\log D)
\longrightarrow\Omega^p_{U/k}
\]
and restriction of differential forms to $D$.
Both commute with strict \'etale pullback.
Using also $a^*\cO_{\cY}(-\cX)\cong\cO_U(-D)$, the sequence therefore descends and is exact by the same argument as in Lemma~\ref{lemma:poincare-residue-ses}.
\end{proof}

\section{The Kodaira--Spencer method}\label{section:KS-method}

In \cite{KatzSarnak:1999}, Katz and Sarnak prove finiteness of automorphism groups of smooth hypersurfaces in projective space over any field using the method of Kodaira and Spencer \cite{KodairaSpencer:1958} combined with vanishing results of Deligne.
We generalise this method as far as possible.
The structure of the argument in this section is taken straight from \cite{KodairaSpencer:1958} and \cite{KatzSarnak:1999}. What is new here is that, with a bit of bookkeeping, these powerful general principles apply in a much wider context.

Let $\cY$ be a smooth DM stack of dimension $n$ over a field $k$ and $i : \cX \subset \cY$ a smooth Cartier divisor with coarse moduli space $X$.

We introduce the following family of line bundles for each $q \geq 0$
\[\cL_q \coloneqq \omega_\cY^{\otimes -1} \otimes \cO_\cY(-(q+1)\cX) \in \Pic(\cY) \enspace .\]
Note that $\cL_q \otimes \cO_\cY(-\cX) \cong \cL_{q+1}$.
We give conditions under which $H^0(\cX, T_\cX) = 0$.

Since $\cX$ and $\cY$ are smooth,
\begin{equation}\label{equ:AlternatingAlgebra}
    T_{\cX} \cong \Omega^{n-2}_{\cX} \otimes \omega_\cX^{\otimes -1},
\end{equation}
and the same holds for $\cY$.

Adjunction gives $\omega_\cX \cong (\omega_\cY \otimes_{\cO_\cY} \cO_\cY(\cX))|_\cX$, and using that the conormal bundle of a Cartier divisor satisfies $\mathcal{N}^*_{\cX/\cY} \cong \cO_\cY(-\cX)|_\cX$, we obtain $\omega_\cX^{\otimes -1} \cong (\omega_\cY^{\otimes -1} \otimes \cO_\cY(-\cX))|_\cX = i^*\cL_0$.
Thus, via \eqref{equ:AlternatingAlgebra}, we want to prove that $H^0(\cX, \Omega^{n-2}_\cX \otimes \cL_0|_\cX) = 0$, which is the case $q=0$ of the statement $\mathbf{C}(q)$ below.

This is exactly the statement which Kodaira--Spencer and Katz--Sarnak proved via descending induction.
We will use it together with three further vanishing statements:
\begin{align}
    \tag{$\mathbf{A}(q)$}\label{Aq} H^q(\cY, \Omega_\cY^{n-q-1}(\log \cX) \otimes \cL_q) &= 0, \\
    \tag{$\mathbf{B}(q)$}\label{Bq} H^q(\cY, \Omega_\cY^{n-q-2} \otimes \cL_q) &= 0, \\
    \tag{$\mathbf{C}(q)$}\label{Cq} H^q(\cX, \Omega^{n-q-2}_\cX \otimes \cL_q|_\cX) &= 0, \\
    \tag{$\mathbf{D}(q)$}\label{Dq} H^q(\cY, \Omega^{n-q-1}_\cY \otimes \cL_q) &= 0.
\end{align}

To relate these vanishing statements and piece together Kodaira and Spencer's descending induction we use the two short exact sequences from Section \ref{section:stacky-ses}.

\begin{lemma}\label{lemma:B+A=C}
    $\mathbf{B}(q)$ and $\mathbf{A}(q+1)$ imply $\mathbf{C}(q)$.
\end{lemma}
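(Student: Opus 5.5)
We tensor the restriction sequence of Lemma \ref{lemma:restriction-ses} with the line bundle \(\cL_q\) and take cohomology. Take \(p = n-q-2\). Since \(\cL_q\) is locally free, tensoring preserves exactness, and the projection formula gives \(i_*\Omega^{p}_\cX\otimes\cL_q\cong i_*(\Omega^p_\cX\otimes\cL_q|_\cX)\). Using \(\cL_q\otimes\cO_\cY(-\cX)\cong\cL_{q+1}\), we obtain the short exact sequence
\[
0 \longrightarrow \Omega^{n-q-2}_\cY(\log\cX)\otimes\cL_{q+1} \longrightarrow \Omega^{n-q-2}_\cY\otimes\cL_q \longrightarrow i_*\bigl(\Omega^{n-q-2}_\cX\otimes\cL_q|_\cX\bigr) \longrightarrow 0 .
\]
If \(n-q-2<0\), all terms vanish by convention and \(\mathbf{C}(q)\) holds trivially, so we may assume \(p\geq 0\).

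Next we take the long exact cohomology sequence on \(\cY\). The relevant portion is
\[
H^q(\cY,\Omega^{n-q-2}_\cY\otimes\cL_q)\longrightarrow H^q(\cY, i_*(\Omega^{n-q-2}_\cX\otimes\cL_q|_\cX))\longrightarrow H^{q+1}(\cY,\Omega^{n-q-2}_\cY(\log\cX)\otimes\cL_{q+1}).
\]
The left term vanishes by \(\mathbf{B}(q)\). The right term is exactly the group in \(\mathbf{A}(q+1)\), because \(n-(q+1)-1=n-q-2\), so it vanishes too. Hence the middle term is zero.

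Finally, we identify the middle term with \(H^q(\cX,\Omega^{n-q-2}_\cX\otimes\cL_q|_\cX)\). For this we need \(i_*\) to be exact on quasi-coherent sheaves, so that \(H^q(\cY,i_*\cF)\cong H^q(\cX,\cF)\). This holds because \(i\) is a closed immersion, hence affine, and pushforward along an affine representable morphism of DM stacks preserves cohomology. Equivalently, one can compute both sides with the Čech complexes of the étale atlas \(U\) and its pullback \(D\), as in Section \ref{section:stacky-ses}. This identification is the only point needing care in the stacky setting. The rest is the standard Kodaira--Spencer diagram chase, and the conclusion \(\mathbf{C}(q)\) follows.
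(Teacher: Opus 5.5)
Your proposal is correct and follows essentially the same argument as the paper. The paper also twists the restriction sequence (with $p=n-q-2$) by $\cL_q$, uses $\cL_q\otimes\cO_\cY(-\cX)\cong\cL_{q+1}$, and reads $\mathbf{C}(q)$ off the long exact sequence from $\mathbf{B}(q)$ and $\mathbf{A}(q+1)$. You go further only in spelling out details the paper leaves implicit: the projection formula, the trivial case $n-q-2<0$, and the identification $H^q(\cY,i_*\mathcal F)\cong H^q(\cX,\mathcal F)$ for the closed immersion $i$.
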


\begin{proof}
    We twist the restriction short exact sequence \ref{lemma:restriction-ses} by $\cL_q$ and consider the long exact sequence in cohomology
    \[\cdots \to H^{q}(\cY, \Omega^{n-q-2}_\cY \otimes \cL_q) \to H^q(\cX, \Omega_\cX^{n-q-2} \otimes i^{\ast}\cL_q) \to H^{q+1}(\cY, \Omega_\cY^{n-q-2}(\log \cX) \otimes \cL_{q+1}) \to \cdots\]
    from which the result follows.
\end{proof}

\begin{lemma}
    \ref{Cq} and \ref{Dq} imply \ref{Aq}.
\end{lemma}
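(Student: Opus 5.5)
Take the Poincaré residue sequence of Lemma~\ref{lemma:poincare-residue-ses} with $p = n-q-1$, twist it by the line bundle $\cL_q$, and read off the long exact sequence in cohomology. Since $\cL_q$ is locally free, tensoring preserves exactness. By the projection formula for the closed immersion $i$, we have $i_*\Omega^{n-q-2}_\cX \otimes \cL_q \cong i_*(\Omega^{n-q-2}_\cX\otimes i^*\cL_q)$. The twisted sequence is therefore
\[
0 \to \Omega^{n-q-1}_\cY\otimes\cL_q \to \Omega^{n-q-1}_\cY(\log\cX)\otimes\cL_q \to i_*\bigl(\Omega^{n-q-2}_\cX\otimes \cL_q|_\cX\bigr) \to 0 .
\]

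Because $i$ is a closed immersion, $i_*$ is exact, so $H^q(\cY, i_*\mathcal G) \cong H^q(\cX,\mathcal G)$. The relevant portion of the long exact sequence then reads
\[
H^q(\cY,\Omega^{n-q-1}_\cY\otimes\cL_q) \to H^q(\cY,\Omega^{n-q-1}_\cY(\log\cX)\otimes\cL_q) \to H^q(\cX,\Omega^{n-q-2}_\cX\otimes\cL_q|_\cX).
\]
The left term vanishes by $\mathbf{D}(q)$ and the right term vanishes by $\mathbf{C}(q)$. Exactness in the middle forces the middle term to vanish, and that is precisely $\mathbf{A}(q)$.

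Two boundary cases need a check. If $n-q-1<0$, every sheaf in the sequence is zero by convention, so $\mathbf{A}(q)$ holds trivially. If $n-q-1=0$, the residue sequence degenerates to $\cO_\cY \cong \Omega^0_\cY(\log\cX)$ with zero cokernel; this agrees with the convention $\Omega^{-1}_\cX = 0$ stated in Lemma~\ref{lemma:poincare-residue-ses}, so the same argument goes through.

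There is no genuine obstacle here; the whole argument is the long exact sequence. The only points to justify are the projection formula and the identification of cohomology under $i_*$ on a DM stack. Both hold because $i$ is an affine (indeed closed) morphism, so $R^j i_* = 0$ for $j>0$, and both can be checked on the étale atlas as in Section~\ref{section:stacky-ses}.
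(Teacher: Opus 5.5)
Your proof is correct and is the same argument as the paper's: twist the Poincar\'e residue sequence of Lemma~\ref{lemma:poincare-residue-ses} (with $p=n-q-1$) by $\cL_q$, then use exactness of the long exact sequence at the middle term, where $\mathbf{D}(q)$ and $\mathbf{C}(q)$ kill the two outer terms. Your remarks on the projection formula, the exactness of $i_*$, and the degenerate cases $n-q-1\leq 0$ are accurate, and they spell out details the paper leaves implicit.
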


\begin{proof}
    Consider the long exact sequence coming from the Poincar\'e residue short exact sequence \ref{lemma:poincare-residue-ses} twisted by $\cL_q$:
    \[\cdots \to H^q(\cY,\Omega^{n-q-1}_\cY \otimes \cL_q) \to H^q(\cY, \Omega^{n-q-1}_\cY(\log \cX) \otimes \cL_q) \to H^q(\cX, \Omega^{n-q-2}_\cX \otimes i^{\ast}\cL_q) \to \cdots \enspace .\]
    The result follows.
\end{proof}

Combining these two lemmas, we get the `Kodaira--Spencer' method.

\begin{corollary}\label{cor:KS-method}
    If \ref{Bq} and \ref{Dq} hold for every $q$, then \ref{Cq} holds for every $q$.
\end{corollary}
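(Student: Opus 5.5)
The plan is a descending induction on \(q\), chaining Lemma~\ref{lemma:B+A=C} with the lemma that \ref{Cq} and \ref{Dq} imply \ref{Aq}. The base case comes from vanishing for dimensional reasons once \(q\) is large. Concretely, I would prove the joint statement
\[
\mathbf{A}(q)\ \text{and}\ \mathbf{C}(q)\quad\text{for all } q\geq 0 ,
\]
by descending induction, starting from some \(q_0\) beyond which everything is zero.

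For the base case, observe that \(\Omega_\cY^{n-q-1}(\log\cX)=0\) as soon as \(n-q-1<0\), i.e.\ \(q\geq n\). Hence \(\mathbf{A}(q)\) holds trivially for \(q\geq n\). Alternatively, one can use that a DM stack of dimension \(n\) has coherent cohomology vanishing above degree \(n\) in the tame case, but the degree convention on forms already suffices and avoids any cohomological dimension argument. Likewise \(\Omega_\cX^{n-q-2}=0\) for \(q\geq n-1\), so \(\mathbf{C}(q)\) is automatic there.

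For the inductive step, suppose \(\mathbf{A}(q+1)\) holds. Lemma~\ref{lemma:B+A=C}, together with the hypothesis \ref{Bq}, gives \(\mathbf{C}(q)\). The second lemma, combining \(\mathbf{C}(q)\) with the hypothesis \ref{Dq}, then gives \(\mathbf{A}(q)\), which closes the induction. Running the induction down to \(q=0\) yields \(\mathbf{C}(q)\) for every \(q\geq 0\), and in particular \(\mathbf{C}(0)\).

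The only point needing care is the bookkeeping in the long exact sequences. In the proof of Lemma~\ref{lemma:B+A=C}, the twist \(\cO_\cY(-\cX)\otimes\cL_q\cong\cL_{q+1}\) must land exactly on the group appearing in \(\mathbf{A}(q+1)\), with the correct form degree \(n-(q+1)-1=n-q-2\); this matches. One should also confirm that the base case covers the first step actually used: for \(q=n-1\), the statement \(\mathbf{A}(n)\) is trivial. No genuine obstacle arises, since the corollary is a formal consequence of the two preceding lemmas.
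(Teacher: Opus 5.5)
Your proposal is correct and matches the paper's argument. The paper also chains $\mathbf{A}(q+1)\Rightarrow\mathbf{C}(q)\Rightarrow\mathbf{A}(q)$ via the two preceding lemmas and runs a descending induction. Your base case at $q\geq n$, using the convention $\Omega^{r}_{\cY}(\log\cX)=0$ for $r<0$, is exactly what justifies the paper's remark that $\mathbf{A}(q)$ holds for $q>n$, and it needs no tameness or cohomological-dimension input.
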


\begin{proof}
    Given \ref{Bq} and \ref{Dq}, the two preceding lemmas give $\mathbf{A}(q+1) \implies $\ref{Cq} $\implies$ \ref{Aq}.
    Since for $q>n$ we know \ref{Aq} holds, we conclude that \ref{Aq} and \ref{Cq} hold for all $q$.
\end{proof}

We can then collect this all into the following theorem.

\begin{theorem}
\label{thm:KS-method}
Let \(k\) be a field and let \(\cY\) be a smooth, separated Deligne--Mumford stack over \(k\) of dimension \(n\), where \(n\geq 2\). 
Let \(i:\cX\hookrightarrow\cY\) be a smooth, effective Cartier divisor.

Let \(\pi_X:\cX\to X\) be the coarse moduli space.
Suppose that \(\pi_X\) is an isomorphism outside a closed subset of codimension at least \(2\).
Assume that \ref{Bq} and \ref{Dq} hold for every $q$.

Then
\[H^0(\cX,T_{\cX})\cong H^0(X,T_X)=0.\]
Moreover, if $X$ is Fano or general type, then $\Aut(X)$ is finite.
\end{theorem}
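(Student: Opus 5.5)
By Corollary \ref{cor:KS-method}, the hypotheses \ref{Bq} and \ref{Dq} for every $q$ give \ref{Cq} for every $q$. In particular $\mathbf{C}(0)$ holds:
\[H^0(\cX,\Omega^{n-2}_\cX\otimes i^*\cL_0)=0.\]
As explained before the statement, adjunction and the conormal isomorphism give $i^*\cL_0\cong\omega_\cX^{\otimes -1}$. Since $\cX$ is smooth of dimension $n-1$, \eqref{equ:AlternatingAlgebra} gives $T_\cX\cong\Omega^{n-2}_\cX\otimes\omega_\cX^{\otimes-1}$. Hence $H^0(\cX,T_\cX)=0$. This part is pure bookkeeping; the only point to check is that $\Omega^{n-2}_\cX$ makes sense, which it does because $n\ge2$.

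Next I compare with the coarse space. The sheaf $T_\cX$ is locally free, hence reflexive, and $\pi_X$ is an isomorphism outside a closed subset of codimension at least $2$. So Lemma \ref{lemma:pushforward-is-reflexive} shows that $\pi_{X*}T_\cX$ is reflexive on the normal scheme $X$. The sheaf $T_X=\mathcal{H}om(\Omega^1_X,\cO_X)$ is a dual, hence also reflexive. The two sheaves agree on the open set $V\subseteq X$ over which $\pi_X$ is an isomorphism, and $V$ is smooth there because $\cX$ is smooth. Two reflexive sheaves on a normal scheme that agree off codimension $2$ are isomorphic, exactly as in Corollary \ref{cor:pushforward-p-forms}. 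Taking global sections,
\[H^0(X,T_X)=H^0(X,\pi_{X*}T_\cX)=H^0(\cX,T_\cX)=0.\]
The only care needed is identifying $T_X$ on $V$ with the tangent sheaf of $\cX$ there, which holds because $\pi_X$ is an isomorphism over $V$.

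For finiteness, $\Aut_{X/k}$ is a group scheme locally of finite type whose Lie algebra is $H^0(X,T_X)=0$, so it is étale over $k$. It remains to see that it is of finite type.
\begin{itemize}
\item If $X$ is of general type, Matsumura's theorem gives finiteness directly.
\item If $X$ is Fano, automorphisms preserve the anticanonical class. Some multiple $-mK_X$ is an ample Cartier divisor, so $\Aut_{X/k}$ acts on $\PP(H^0(X,\cO_X(-mK_X)))$ and is a closed subgroup scheme of the stabiliser of $X$ in $\PGL$. It is therefore of finite type, and being étale it is finite.
\end{itemize}
The main subtlety I expect is in the Fano case: the anticanonical divisor on the coarse space is only $\QQ$-Cartier. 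I would handle this by passing to a Cartier multiple and using that $\pi_*$ identifies $\Pic(\cX)$ with $\Cl(X)$ (Lemma \ref{lemma:pushforward-is-reflexive}), so that $\omega_X$ corresponds to $\omega_\cX$.
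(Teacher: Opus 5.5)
Your proposal is correct and follows the paper's proof. Corollary \ref{cor:KS-method} gives $\mathbf{C}(0)$, which is $H^0(\cX,T_\cX)=0$ via adjunction. Lemma \ref{lemma:pushforward-is-reflexive} identifies $\pi_{X*}T_\cX$ with $T_X$ to pass to the coarse space. Finiteness then follows from discreteness together with finite type. The only difference is in the finite-type step: the paper covers the Fano and general type cases with one citation (Brion's lemma for $K_X$ $\QQ$-Cartier and anti-ample or big), whereas you sketch the anticanonical embedding into $\PGL$ for the Fano case and invoke Matsumura for the general type case.
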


\begin{proof}
    Corollary \ref{cor:KS-method} gives us that $H^0(\cX,T_\cX) = 0$.
    Then we apply Lemma \ref{lemma:pushforward-is-reflexive} as $\pi_*T_\cX \cong T_X$ is reflexive to get
    \[H^0(\cX,T_\cX) =H^0(X,T_X)= 0.\]

    Lastly, we remark that if $K_X$ is anti-ample or big and $\QQ$-Cartier, then $\Aut_{X/k}$ is of finite type.
    See for example \cite[Lemma 2.3]{Brion:2022}.
    Since $H^0(X,T_X)=0$, it is discrete and thus finite.
\end{proof}

One very nice feature of the method is that it proves discreteness / finiteness for every smooth element in the linear system.
In fact, it proves more, since $H^0(X,T_X)=0$ holds for every well-formed, quasismooth element in the linear system, we get the immediate corollary as in \cite[Corollary 11.8.4]{KatzSarnak:1999}.

\begin{corollary}
    In the situation of the above theorem, 
    let $U \subset |X|$ be the locus of quasismooth elements of this linear system and $\mathcal{U}\to U$ be the universal quasismooth hypersurface.
    
    The $U$-group scheme $\Aut_U(\mathcal U) \to U$ is unramified.
\end{corollary}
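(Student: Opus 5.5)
The plan is to deduce the statement from Theorem~\ref{thm:KS-method}, applied fibrewise, together with the standard criterion that a group scheme locally of finite type over a base is unramified if and only if its Lie algebra vanishes fibrewise. I would first make the objects precise. The universal family \(\mathcal U\to U\) is projective and flat. Over each point \(u\in U\), the fibre \(\mathcal U_u\) is the coarse space \(X_u\) of a smooth Cartier divisor \(\cX_u\subset\cY_{\kappa(u)}\) in the given class. After base change to \(\kappa(u)\), the hypotheses of Theorem~\ref{thm:KS-method} hold: \ref{Bq} and \ref{Dq} depend only on \(\cY\), \(\omega_\cY\) and the class of \(\cO_\cY(\cX)\), and cohomology commutes with flat field extension. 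The coarse-map hypothesis also holds for each member, assuming, as in the surrounding discussion, that the members of \(U\) are well-formed. The isomorphism-in-codimension-one condition is inherited by every quasismooth member in this setting, for instance via Lemma~\ref{lemma:well-formed-coarse-map} in the toric case.

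With this in place, Theorem~\ref{thm:KS-method} gives \(H^0(X_u,T_{X_u})=0\) for every \(u\in U\). The relative automorphism functor \(\Aut_U(\mathcal U)\) is representable by a group scheme locally of finite type over \(U\), by Grothendieck's Hilbert scheme construction applied to graphs. Its fibre over \(u\) is \(\Aut_{X_u/\kappa(u)}\), whose Lie algebra is \(H^0(X_u,T_{X_u})\).

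Next I show that the relative cotangent sheaf vanishes. For a group scheme \(G\to U\) with identity section \(e\), one has \(\Omega_{G/U}\cong \pi^*e^*\Omega_{G/U}\) by translation invariance. Moreover, \(e^*\Omega_{G/U}\otimes\kappa(u)\) is dual to \(\Lie(G_u)\), and \(\Lie(G_u)=0\). By Nakayama, \(e^*\Omega_{G/U}=0\), hence \(\Omega_{G/U}=0\). Since \(G\) is locally of finite type, this is exactly unramifiedness.

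The main obstacle is the bookkeeping in the first step. One must check that every quasismooth member, over arbitrary residue fields, satisfies the coarse-moduli hypothesis, and that the fibre of \(\Aut_U(\mathcal U)\) is really the automorphism scheme of the coarse space rather than of the stack. If necessary, I would work instead with \(\Aut_U(\mathcal X_U)\) for the universal stacky divisor. I would then use the fibrewise vanishing \(H^0(\cX_u,T_{\cX_u})=0\), which Corollary~\ref{cor:KS-method} gives with no coarse-space hypothesis at all.
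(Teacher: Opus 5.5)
Your proposal is correct and follows the paper's approach. The paper gives no argument beyond observing that \(H^0(X,T_X)=0\) for every well-formed quasismooth member and citing Katz--Sarnak's Corollary 11.8.4, which is exactly your fibrewise argument: \(\Lie(G_u)=0\), then Nakayama and translation invariance give \(\Omega_{G/U}=0\). Your explicit assumption that every member of \(U\) is well-formed matches the paper's own tacit restriction. Note, though, that Lemma~\ref{lemma:well-formed-coarse-map} does not show well-formedness is inherited across the linear system, so that point rests on the assumption rather than on the lemma.
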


\section{Toric Hypersurfaces}

Thus we have now reduced the question of discreteness to showing that \ref{Bq} and \ref{Dq} hold.

\subsection{Hypersurfaces which are proportional to $K_Y$}\label{subsection:toric-hyps}

Assume now that $Y$ is a projective simplicial toric variety of dimension $n$ and let $\cY$ be its smooth canonical toric orbifold.
When an ample hypersurface $X \subset Y$ is proportional to $K_Y$, we give a method of checking if $\Aut(X)$ is discrete, so long as the proportionality is outside a bad window.

Suppose that $X$ is a well-formed quasismooth ample hypersurface such that $-K_Y \sim_\QQ rX$ for some rational $r\in \QQ_{>0}$.
This will force the $\cL_q \in \Pic(\cY)$ to be either ample, anti-ample or torsion, which enables us to apply vanishing theorems if we avoid certain bad cohomological windows.

\begin{theorem}[Discreteness for $K_{Y}$-proportional hypersurfaces]\label{thm:toric-fano-criterion}
In the above situation, assume that $\mathbf{B}(0)$ and $\mathbf{D}(0)$ hold for $\cY$.
If \(r\neq\left\lceil\frac n2\right\rceil\), then
\[H^0(X,T_X)=0,\]
and if $K_X$ is ample or anti-ample then $\Aut_{X/k}$ is a finite group scheme.
\end{theorem}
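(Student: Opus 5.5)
The plan is to reduce everything to Theorem~\ref{thm:KS-method}. Lemma~\ref{lemma:well-formed-coarse-map} shows that well-formedness and quasismoothness make \(\cX\to X\) an isomorphism in codimension one, so it suffices to verify \(\mathbf{B}(q)\) and \(\mathbf{D}(q)\) for every \(q\geq 1\); the case \(q=0\) is assumed.

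The key observation is that in \(\Pic(\cY)\otimes\QQ\cong\Cl(Y)\otimes\QQ\) (using Lemma~\ref{lemma:pushforward-is-reflexive}) we have
\[
\cL_q = \omega_\cY^{-1}\otimes\cO_\cY(-(q+1)\cX)\sim_\QQ (r-q-1)\,\cX .
\]
Since \(X\) is ample, \(\cL_q\) is ample for \(q+1<r\), anti-ample for \(q+1>r\), and torsion for \(q+1=r\); the last case occurs only when \(r\in\ZZ\). I would treat these three cases separately.

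\emph{Case \(q+1<r\).} Here \(\cL_q\) is ample and \(q\geq 1\), so both \(\mathbf{B}(q)\) and \(\mathbf{D}(q)\) follow directly from Bott vanishing on the toric orbifold (Corollary~\ref{cor:bott-vanishing-toric-orbifold}).

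\emph{Case \(q+1>r\).} I would apply Serre duality on the smooth proper tame stack \(\cY\), using \(\Omega^{p}_\cY{}^\vee\otimes\omega_\cY\cong\Omega^{n-p}_\cY\). This turns \(\mathbf{D}(q)\) and \(\mathbf{B}(q)\) into the vanishing of
\[
H^{n-q}(\cY,\Omega^{q+1}_\cY\otimes\cL_q^{-1})
\quad\text{and}\quad
H^{n-q}(\cY,\Omega^{q+2}_\cY\otimes\cL_q^{-1}),
\]
where \(\cL_q^{-1}\) is ample. For \(q<n\) these vanish by Bott vanishing. For \(q\geq n\) the original groups vanish outright, because the form degrees \(n-q-1\) and \(n-q-2\) are negative, or because \(q>n\) exceeds the cohomological dimension of \(\cY\), which is tame with coarse space of dimension \(n\).

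\emph{Case \(q=r-1\).} Here \(\cL_q\) is torsion, and I would use extended Bott vanishing: Theorem~\ref{theorem:vanishing-with-no-twisting} transported to \(\cY\) as in the proof of Corollary~\ref{cor:bott-vanishing-toric-orbifold}. It gives \(\mathbf{D}(q)\) provided \(n-q-1\neq q\), and \(\mathbf{B}(q)\) provided \(n-q-2\neq q\). Substituting \(q=r-1\), these conditions become \(r\neq\frac{n+1}{2}\) and \(r\neq\frac n2\). For integer \(r\), excluding both values is exactly the hypothesis \(r\neq\lceil n/2\rceil\). If \(r=1\) then \(q=0\), which is covered by the assumption.

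I expect this torsion case to be the main obstacle. Theorem~\ref{theorem:vanishing-with-no-twisting} is stated only for the trivial twist, whereas \(\Cl(Y)\) may have torsion, for instance for fake weighted projective spaces. My first attempt would be to rerun the toric computation: the cohomology of \(\Omega^{[p]}_Y(D)\) decomposes into \(M\)-graded (or \(\Cl\)-graded, via Cox) pieces, and a nontrivial torsion twist only shifts which graded pieces contribute, so the vanishing for \(p\neq q\) should persist. Failing that, I would pass to the finite abelian cover trivialising the torsion and take invariants, which is legitimate since the characteristic is tame.

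Finally, finiteness comes from the last part of Theorem~\ref{thm:KS-method}. If \(K_X\) is ample or anti-ample, it is \(\QQ\)-Cartier and \(X\) is of general type or Fano. Hence \(\Aut_{X/k}\) is of finite type, and since \(H^0(X,T_X)=0\) it is also discrete, so it is finite.
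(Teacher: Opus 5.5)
Your proposal is correct and follows the paper's proof essentially step for step. Like the paper, you write $\cL_q\sim_\QQ (r-q-1)\cX$, settle the ample and anti-ample cases by Bott vanishing and Serre duality, treat $q=r-1$ with the off-diagonal vanishing $H^q(\cY,\Omega^p_\cY)=0$ for $p\neq q$, and conclude with Theorem~\ref{thm:KS-method}. The paper is less careful than you in the torsion case: it applies Theorem~\ref{theorem:vanishing-with-no-twisting} as though $\cL_q$ were trivial. Your finite-cover fix works: take the toric variety $Y'$ on the same fan over the sublattice spanned by the rays, which is a tame cover of $Y$ that is \'etale in codimension one. Then each torsion twist $\Omega^{[p]}_Y(D_\chi)$ is an isotypic summand of $f_*\Omega^{[p]}_{Y'}$, so its off-diagonal cohomology vanishes.
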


If \(0<r<1\), then \(K_X\) is ample and the conditions \(\mathbf{B}(0)\) and \(\mathbf{D}(0)\) hold automatically.
In this case, finiteness is already covered by Matsumura's theorem \cite{Matsumura:1963}, at least in characteristic 0.
The additional conditions are needed in the Fano range \(r>1\).

\begin{proof}
By hypothesis $[\cL_q]=(r-(q+1))[\cX]$ in $\Pic(\cY)_{\QQ}$.
For $q\geq1$: if $q+1<r$ then $\cL_q$ is ample and if $q+1>r$ then $\cL_q$ is anti-ample; either way $\mathbf{B}(q)$ and $\mathbf{D}(q)$ hold, by Corollary~\ref{cor:bott-vanishing-toric-orbifold} or Serre Duality applied to it, since 
\[H^{q}(\cY,\Omega_\cY^{p}\otimes \cL_q)^{\vee} \cong H^{n-q}(\cY,\Omega_\cY^{n-p}\otimes \cL_q^{-1}).\]

It remains to treat $q+1=r$, where $\cL_q^m\cong\cO_{\cY}$ for some power $m$.
Since $\cY$ is a smooth proper toric DM stack, $H^q(\cY,\Omega_{\cY}^p)=0$ for $p\neq q$ by Theorem \ref{theorem:vanishing-with-no-twisting}, so $\mathbf{B}(q)$ can fail only if $n-q-2=q$ and $\mathbf{D}(q)$ only if $n-q-1=q$.
However, in either case this would force $r=q+1=\lceil n/2\rceil$.

Hence the hypotheses of Theorem~\ref{thm:KS-method} hold, giving $H^0(\cX,T_{\cX})\cong H^0(X,T_X)=0$.
\end{proof}

\subsection{Weighted Projective Spaces}

In the Picard rank one case, every hypersurface is proportional to the canonical bundle.
In this section, we settle when the hypothesis of Theorem \ref{thm:toric-fano-criterion} holds for weighted projective spaces.

The finiteness results for the automorphism groups were first proven by Esser for every quasismooth hypersurface over $\CC$ in \cite{Esser:2024}. Our theorem proves that the finiteness holds in every tame characteristic.

\noindent\textbf{Bott formula for weighted projective spaces}

The Bott formula was computed in \cite[2.3.2]{Dolgachev:1982}.
We assume, as in \cite{Dolgachev:1982}, that the characteristic of the field divides none of the weights.
This is exactly asking that the canonical toric stack is tame.

We first fix notation.
Let $\PP = \PP(a_0, \dots , a_n)$ be a well-formed weighted projective space.
For every $I \subset \{0,\dots , n\}$ define $|\underline{a}_I| = \sum_{i\in I}a_i$ and for a sheaf $\cF$ denote $h^q(\cF) = \dim_kH^q(\PP , \cF)$.

\begin{lemma}\cite[2.3.2]{Dolgachev:1982}
\label{lemma:wps-bott}
	In the notation above we have the following.
	\begin{itemize}
		\item For $p>0$ and $\ell \neq 0$, we have
		\[h^0(\Omega_{\PP}^{[p]}(\ell)) = \left( \sum_{|I| = p}h^0(\cO_\PP(\ell-|\underline{a}_I|)) \right) - h^0 \left(\Omega_{\PP}^{[p-1]}(\ell)\right) .\]
        \item For $p>0$ we have $h^0(\Omega^{[p]}) =0$,
		\item For $q \neq 0,p,n$ and any $\ell \in \ZZ$, we have $h^q(\Omega_\PP^{[p]}(\ell)) = 0$.
		\item For $p=0, \dots , n$, we have $h^p(\Omega_\PP^{[p]})=1$.
		\item For $p=1,\dots , n-1$ and $\ell \neq 0$ we have $h^p(\Omega_{\PP}^{[p]}(\ell)) = 0$.
		\item For $p=n$, we have \[h^n(\Omega_\PP^{[p]}(\ell)) =  \left( \sum_{|I| = n+1-p} h^0(\cO_\PP(-\ell-|\underline{a}_I|)) \right) - h^n(\Omega_\PP^{[p-1]}(\ell)) \enspace .\]
	\end{itemize}
\end{lemma}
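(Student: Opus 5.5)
Since the lemma is quoted from Dolgachev, the plan is to re-derive it on the tame canonical stack $\cY=[(\AA^{n+1}\setminus 0)/\GG_m]$ and then pass to $\PP$. By Corollary~\ref{cor:pushforward-p-forms} and cohomological affineness of the tame coarse map $\pi$, we have $H^q(\PP,\Omega^{[p]}_\PP(\ell))\cong H^q(\cY,\Omega^p_\cY\otimes\cO_\cY(\ell))$. So every statement may be proved for the smooth stack, where we have locally free sheaves and honest exact sequences.

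\textbf{Step 1.} First I construct the weighted Euler sequence on $\cY$,
\[0\to\Omega^1_\cY\to\bigoplus_{i=0}^n\cO_\cY(-a_i)\to\cO_\cY\to 0.\]
It comes from $\GG_m$-equivariant descent of the cotangent sequence of $\AA^{n+1}\setminus0\to\cY$. The last map is contraction with the Euler field $\sum a_ix_i\partial_i$, and its surjectivity needs the $a_i$ invertible in $k$, which is exactly tameness. Taking exterior powers gives
\[0\to\Omega^p_\cY\to\bigoplus_{|I|=p}\cO_\cY(-|\underline a_I|)\to\Omega^{p-1}_\cY\to0.\]

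\textbf{Step 2.} Next I compute the cohomology of line bundles. Via \v{C}ech cohomology on the cover $\{x_i\neq0\}$, or via local cohomology of $S=k[x_0,\dots,x_n]$, we get $H^0(\cO(m))=S_m$, $H^n(\cO(m))\cong (S_{-m-|\underline a|})^\vee$, and $H^q(\cO(m))=0$ for $0<q<n$.

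\textbf{Step 3.} Then I take the long exact sequences of the twisted sequence from Step 1 and induct on $p$.
\begin{itemize}
\item For $1\le q\le n-1$, the middle terms vanish, so $H^q(\Omega^p(\ell))\cong H^{q-1}(\Omega^{p-1}(\ell))$ for $2\le q\le n-1$. Iterating down to $\Omega^0=\cO$ gives the vanishing for $q\ne0,p,n$. It also gives $h^p(\Omega^p)=1$, from $H^0(\cO)=k$, and the vanishing of $h^p(\Omega^p(\ell))$ for $\ell\ne0$, since $H^0(\cO(\ell))=0$ for $\ell<0$. The case $\ell>0$ needs the extra step below.
\item The $h^0$ formula comes from left exactness together with surjectivity of $H^0$ onto $H^0(\Omega^{p-1}(\ell))$. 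That surjectivity holds because $H^1(\Omega^p(\ell))=0$ for $p\ge2$, and for $p=1$ it holds by an explicit check when $\ell\ne0$: contraction with the Euler field acts as multiplication by $\ell$ on degree-$\ell$ forms, and $\ell$ is invertible there.
\item The vanishing $h^0(\Omega^p)=0$ follows since $|\underline a_I|>0$.
\item The $h^n$ formula is obtained dually, either by Serre duality $\Omega^p\cong(\Omega^{n-p})^\vee\otimes\omega_\cY$ with $\omega_\cY=\cO(-|\underline a|)$, or directly from the right end of the long exact sequence.
\end{itemize}

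\textbf{Main obstacle.} I expect the hardest part to be the edge cases of the connecting maps, namely $q=1$ and $q=p$ with $\ell\neq0$. There one must show that the Euler contraction on global sections is surjective, resp. injective, on the relevant graded pieces. I would handle this through the Koszul complex of the Euler derivation on $S$. That complex is exact in nonzero degrees because $\ell$ is invertible in $k$ in the cases allowed by Dolgachev's tameness; when $p\mid\ell$ the statement is to be taken in the sense of Dolgachev's hypotheses.
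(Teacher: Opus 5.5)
The paper gives no argument for this lemma beyond the citation to Dolgachev, so your derivation has to stand on its own. Several parts are fine: the reduction to $\cY$, the weighted Euler sequence and its exterior powers, and the cohomology of $\cO_\cY(m)$. The cases $\ell\le 0$ also work, since there $H^0(\Omega^{p-1}(\ell))\subseteq\bigoplus_{|I|=p-1}S_{\ell-|\underline{a}_I|}$ is either zero or $S_0$.

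The gap is at $\ell>0$. There everything rests on surjectivity of
\[
\bigoplus_{|I|=p}H^0\bigl(\cO(\ell-|\underline{a}_I|)\bigr)\longrightarrow H^0\bigl(\Omega^{p-1}(\ell)\bigr),
\]
that is, on $H^1(\Omega^p(\ell))=0$. Your iteration $H^q(\Omega^p(\ell))\cong H^{q-1}(\Omega^{p-1}(\ell))$ reaches $\Omega^0=\cO$ only when $p<q$. For $p>q$ it stops at $H^1(\Omega^{p-q+1}(\ell))$, so invoking ``$H^1(\Omega^p(\ell))=0$ for $p\ge 2$'' in the $h^0$ bullet is circular. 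The tool you then propose is the Euler identity $\iota_E(df)=\ell f$ for $f\in S_\ell$, i.e.\ the Cartan homotopy $d\iota_E+\iota_E d=\ell$ on weight-$\ell$ forms; note that $\iota_E$ itself is not multiplication by $\ell$. This only works when $\ell\in k^\times$. Tameness says nothing about $\ell$: it asks only that the $a_i$ be invertible. The paper deliberately allows $\mathrm{char}\,k\mid d$, so $\ell=\sum a_i-(q+1)d$ may well be divisible by the characteristic. As written, your argument does not prove, for instance, $H^1(\PP^2,\Omega^1_{\PP^2}(2))=0$ or $H^1(\PP^3,\Omega^2_{\PP^3}(2))=0$ in characteristic $2$. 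Your closing remark about the characteristic dividing $\ell$ has no support in Dolgachev's hypotheses.

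The missing input is that no condition on $\ell$ is needed; the Koszul complex you name is the right object, but its exactness has nothing to do with $\ell$. Because the $a_i$ are units, $a_0x_0,\dots,a_nx_n$ is a regular sequence generating $\mathfrak{m}=(x_0,\dots,x_n)$. Hence $K_\bullet=\bigwedge^\bullet\bigl(\bigoplus_i S(-a_i)\bigr)$ with differential $\iota_E$ is acyclic in positive homological degrees in every internal degree, and $H_0(K_\bullet)=k$ sits in degree $0$. Write $Z_j=\ker(K_j\to K_{j-1})$. Exactness of sheafification shows $\Omega^j_\cY$ is the sheaf associated to $Z_j$. Moreover $H^0(\cY,\Omega^j_\cY(\ell))=(Z_j)_\ell$, because $Z_j$ is a kernel of a map of free modules and functions on $\AA^{n+1}\setminus 0$ extend. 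Consequently:
\begin{itemize}
\item $H^1(\Omega^p(\ell))\cong H_{p-1}(K_\bullet)_\ell$, and combined with your iteration, $H^q(\Omega^p(\ell))\cong H_{p-q}(K_\bullet)_\ell$ for $1\le q\le n-1$.
\item This gives the third and fifth bullets, and the fourth for $1\le p\le n-1$, uniformly in $\ell$.
\item Your left-exactness argument then gives the first bullet.
\end{itemize}
Finally, the right end of the long exact sequence produces the $h^n$ recursion with an extra term $h^{n-1}(\Omega^{p-1}(\ell))$, which is nonzero exactly when $(p,\ell)=(n,0)$. So the last bullet holds except at $(p,\ell)=(n,0)$, where it would contradict the fourth bullet. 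Your appeal to duality glosses over this.
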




We assume the following bounds for the next two results due to \cite{Esser:2024} in the complex case.
\begin{assumption}
    We assume that $d \geq 2\cdot \max\{a_i\}$ with equality permitted only if the maximum weight occurs exactly once.
\end{assumption}

We need the following lemma to rule out one problematic case where extended Bott vanishing does fail where we need it.
However, in these cases there exists no quasismooth hypersurface in the linear system.

\begin{lemma}\label{lemma:annoying}
	Consider a well-formed weighted projective space $\PP=\PP(a_0, \dots , a_n)$ with $n\geq 4$ and even.
	Then there exists no quasismooth hypersurface of degree $d=\frac{2}{n}\sum a_i$.
\end{lemma}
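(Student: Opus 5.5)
The plan is to argue by contradiction with the standard Jacobian description of quasismoothness, using the degree bound of the Assumption above, which is in force for this lemma. I stress that the Assumption is genuinely needed. For instance, in \(\PP(1,1,1,1,2)\) one has \(\frac{2}{4}\sum a_i=3\), and the cubic \(x_4x_0+c(x_0,\dots,x_3)\) with \(c\) general is quasismooth. There, however, \(d=3<4=2\max a_i\). So the proof must use \(d\geq 2\max a_i\), with equality only if the maximum weight is unique.

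\textbf{Step 1 (combinatorial criterion).} For a general degree \(d\) polynomial \(f\) in \(k[x_0,\dots,x_n]\), quasismoothness forces the following. For each \(i\), either \(a_i\mid d\), so that \(x_i^{m}\) occurs in \(f\), or there is \(j\neq i\) with \(d-ma_i=a_j\) for some \(m\geq1\), so that \(x_i^mx_j\) occurs. Otherwise \(f\) and all its partials vanish at the coordinate point \(P_i\). This is the tame-characteristic form of the criterion of \cite{Przyjalkowski:2024}. Since it only asks that \(\partial f\) not vanish at coordinate vertices, it follows from the definition via smoothness of \(\cX\) by a direct computation in the chart \(x_i\neq0\).

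\textbf{Step 2 (weights are close to the maximum).} Put \(M=\max a_i\) and \(\Sigma=\sum a_i\). Then \(d=2\Sigma/n\geq 2M\) gives \(\Sigma\geq nM\). Writing \(a_i=M-e_i\) with \(e_i\geq 0\) and \(E=\sum e_i\), we obtain \(E\leq M\) and
\[
d=2M+\frac{2(M-E)}{n}, \qquad 0\leq d-2M\leq \frac{2M}{n}\leq \frac{M}{2}.
\]
In particular every \(a_i\geq M-E\). Moreover at most one index has \(e_i\geq M/2\), since \(E\leq M\).

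\textbf{Step 3 (case analysis against Step 1).} I would split according to \(t\coloneqq d-2M\).

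If \(t=0\), then \(E=M\), and the maximum weight occurs exactly once, say \(a_0=M\). For \(i\neq0\), monomials \(x_ix_j\) with \(a_i+a_j=2M\) are impossible, since that needs \(a_i=a_j=M\). So each such \(i\) needs \(a_i\mid 2M\) or \(2M-ma_i=a_j\) with \(m\geq2\). Combined with \(\sum_{i\neq0}e_i=M\) and \(n\geq4\), this should force at least \(n-1\geq 3\) of the \(a_i\) to share a common factor with \(M\) and \(d\). That contradicts well-formedness of \(\PP\), which requires any \(n\) of the weights to be coprime.

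If \(t>0\), then \(x_i^2\) has degree \(2a_i\leq 2M<d\). The available monomials are therefore \(x_ix_j\), requiring \(a_i+a_j=d>2M\), which is impossible, or ones with \(m\geq2\) and \(a_i\leq d/2\). Since all \(a_i\) lie in \([M-E,M]\) and \(d\) lies in \((2M,\tfrac52M]\), the relevant \(m\) are \(2\) or \(3\). Each vertex then satisfies a linear relation such as \(2a_i+a_j=d\) or \(3a_i=d\). Summing these relations over the \(n+1\) vertices and comparing with \(n d=2\Sigma\) should give a contradiction. The evenness of \(n\) enters exactly here: a perfect pairing \(i\leftrightarrow j\) of the \(n+1\) vertices is impossible, since \(n+1\) is odd.

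\textbf{Main obstacle.} The main obstacle is this last bookkeeping. Different vertices may use different \(m\) and different partners \(j\), so the relations need not close up into a clean sum. My first attempt would be to form the directed graph \(i\to j\) of partners. I would then show that cycles force equal weights along the cycle, and use that the cycle lengths, together with a possible unique pure-power vertex \(a_i\mid d\), cannot sum to the odd number \(n+1\) without violating either \(2\Sigma=nd\) or well-formedness.
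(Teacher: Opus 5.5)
Your Steps 1 and 2 are fine. You are right that the degree assumption is needed: your \(\PP(1,1,1,1,2)\), \(d=3\) example shows it, and the paper's proof uses it. The gap is in Step 3, where the lemma is actually proved. It is not carried out, and the mechanisms you propose are not what makes the lemma true.

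In the case \(t=0\) you say the vertex conditions ``should force'' \(n-1\) weights to share a factor, but you give no argument. In the case \(t>0\), the claim that only \(m\in\{2,3\}\) occur is unjustified: you only know \(a_i\ge M-E\), so one weight may be small and need a large \(m\). The summing and pairing bookkeeping is then left open. The reliance on the parity of \(n+1\) is also a warning sign. The conclusion holds for every \(n\ge 4\), odd or even, so parity cannot be what drives it.

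The missing idea is that the contradiction is local, at a single coordinate vertex: the one of the second-largest weight. Order \(a_0\le\dots\le a_n\) and set \(\phi=d-2a_{n-1}\).
\begin{itemize}
\item The assumption gives \(\phi>0\); if \(a_{n-1}=a_n\), the equality clause gives \(d>2a_n\).
\item From \(\sum a_i=\frac n2 d\) and \(a_n\le d/2\) you get \((n-1)a_{n-1}\ge\sum_{i=1}^{n-1}a_i\ge\frac{n-1}{2}d-a_0\). Equivalently \((n-1)\phi\le 2a_0\), so \(0<\phi<a_0\) since \(n\ge4\).
\item Hence \(a_{n-1}\nmid d=2a_{n-1}+\phi\).
\item For every \(j\ne n-1\) one has \(a_{n-1}<d-a_j<2a_{n-1}\). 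The upper bound comes from \(a_j\ge a_0>\phi\). The lower bound comes from \(a_j\le a_{n-1}\), or for \(j=n\) from the equality clause of the assumption.
\end{itemize}
So no monomial \(x_{n-1}^m\) or \(x_{n-1}^m x_j\) has degree \(d\), and your Step 1 criterion fails at \(P_{n-1}\). This single argument covers both of your cases.

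Your own Step 2 data already settle the case \(t>0\) at the vertex of weight \(M\). There \(2M<d<3M\), so a monomial \(x^m\) or \(x^m x_j\) of degree \(d\) would require some weight equal to \(t=\frac{2(M-E)}{n}\). But every weight is at least \(M-E\), and \(M-E>t\) because \(M-E>0\) and \(n\geq 3\).
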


\begin{proof}
Let the weights be $a_0 \le a_1 \le \dots \le a_n$. We show the Iano-Fletcher criterion for quasismoothness \cite[Theorem 8.1]{IanoFletcher:2000} fails at the single-variable stratum $I=\{n-1\}$. It suffices to prove that $a_{n-1} \nmid d$ and $a_{n-1} \nmid (d-a_j)$ for all $j \neq n-1$.

Let $\phi = d - 2a_{n-1}$. We first claim $0 < \phi < a_0$. 
The degree assumptions imply $d>2a_{n-1}$, hence $\phi>0$.
Bounding the sum of $n-1$ terms gives $\sum_{i=1}^{n-1} a_i \ge \frac{n-1}{2}d - a_0$, yielding $(n-1)a_{n-1} \ge \frac{n-1}{2}d - a_0$. Rearranging provides $(n-1)\phi \le 2a_0$. Since $n \ge 4$, this forces $\phi \le \frac{2}{3}a_0 < a_0 \le a_{n-1}$. 
Writing $d = 2a_{n-1} + \phi$ with $0 < \phi < a_{n-1}$ immediately shows $a_{n-1} \nmid d$.

To show $a_{n-1} \nmid (d-a_j)$, we strictly trap $d-a_j \in (a_{n-1}, 2a_{n-1})$ for all $j \neq n-1$:
\begin{itemize}
    \item Upper bound (all $j$): $a_j \ge a_0 > \phi = d - 2a_{n-1} \implies d-a_j < 2a_{n-1}$.
    \item Lower bound ($j < n-1$): $a_j \le a_{n-1} \implies d-a_j \ge d-a_{n-1} > a_{n-1}$ (since $\phi > 0 \implies d > 2a_{n-1}$).
    \item Lower bound ($j = n$): By hypothesis, $d \ge 2a_n$, with strict inequality if $a_{n-1}=a_n$. In either case, $d-a_n > a_{n-1}$.
\end{itemize}

Since $d-a_j$ lies strictly between consecutive multiples of $a_{n-1}$, it cannot be a multiple. Thus, the criterion fails at $I=\{n-1\}$.
\end{proof}

Now one checks the Bott formula for hypersurfaces with degrees respecting Esser's bounds and we generalise his result to a field of characteristic coprime to the weights - one should note that his proof actually works for any characteristic where the weights and the degree are invertible, so our generalisation is just to allow cases where the characteristic divides the degree:

\begin{theorem}\label{theorem:main-wps}
Suppose that $X$ is a quasismooth degree $d$ well-formed hypersurface in a well-formed $\PP(a_0, \dots , a_n)$ with $n \geq 3$ and such that each $a_i$ is invertible in $k$.
We assume that $d \geq 2\cdot \max\{a_i\}$ with equality permitted only if the maximum weight occurs exactly once.

Then $\mathbf{B}(q)$ and $\mathbf{D}(q)$ are satisfied for all $q$.
In particular, the automorphism group schemes are \'etale over \(k\).
\end{theorem}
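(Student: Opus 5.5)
The plan is to work on the canonical stack $\cY=[(\AA^{n+1}\setminus 0)/\GG_m]$ of $\PP=\PP(a_0,\dots,a_n)$. I will push forward along $\pi$ as in Corollary~\ref{cor:bott-vanishing-toric-orbifold}, so that every group in $\mathbf{B}(q)$ and $\mathbf{D}(q)$ becomes a group $H^q(\PP,\Omega^{[p]}_\PP(\ell))$ on the coarse space. These groups are then computed with Lemma~\ref{lemma:wps-bott}. Since $\PP$ is well-formed, $\omega_\PP=\cO_\PP(-\sum a_i)$. Writing $s=\sum a_i$, this gives $\cL_q=\cO(\ell_q)$ with $\ell_q=s-(q+1)d$.

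The conditions to check are
\[
\mathbf{D}(q):\ h^q(\Omega^{[n-q-1]}(\ell_q))=0, \qquad \mathbf{B}(q):\ h^q(\Omega^{[n-q-2]}(\ell_q))=0 .
\]

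I split according to the value of $q$.

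\begin{itemize}
\item \emph{Middle range $0<q<n$.} By Lemma~\ref{lemma:wps-bott}, a group $h^q(\Omega^{[p]}(\ell))$ can be nonzero only when $q=p$. Even then it requires either $\ell=0$, or $p\in\{0,n\}$, where the lemma gives no vanishing statement for $\ell\neq0$.
  \begin{itemize}
  \item For $\mathbf{D}(q)$ we need $p=n-q-1$. If $p=q$, then $n=2q+1$ and $\ell_q=0$, so $d=2s/(n+1)$.
  \item For $\mathbf{B}(q)$ we need $p=n-q-2$. If $p=q$, then $n=2q+2$ and $d=s/(q+1)=2s/n$.
  \item The edge cases $p=0$ with $q=n-1$ or $q=n-2$ produce $h^{q}(\cO(\ell))$ with $0<q<n$, which vanishes.
  \item The case $p=n$ cannot occur, since $p\le n-1$.
  \end{itemize}

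\item \emph{Case $q=0$.} Here I need $h^0(\Omega^{[n-1]}(s-d))=0$ and $h^0(\Omega^{[n-2]}(s-d))=0$.
  \begin{itemize}
  \item I will use the identification $\Omega^{[n-1]}(s)\cong T_\PP$ together with the first recursive formula of Lemma~\ref{lemma:wps-bott}. Equivalently, I will use the Euler-type complex $0\to\Omega^{[p]}\to\bigoplus_{|I|=p}\cO(-|\underline a_I|)\to\cdots$.
  \item This reduces the question to showing that $h^0(\cO(s-d-|\underline a_I|))$ is small enough. Concretely, sections of $\Omega^{[p]}(\ell)$ correspond to forms $\sum f_I\,dx_I$ with $f_I$ of degree $\ell-|\underline a_I|$ that are killed by contraction with the Euler field.
  \item For $p=n-1$ we have $|\underline a_I|=s-a_i-a_j$ for the complementary pair $\{i,j\}$, so the coefficient degrees are $a_i+a_j-d$. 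The hypothesis $d\geq 2\max a_i$, with equality only for a unique maximum, forces $a_i+a_j-d<0$ unless $i=j$ is the maximal index; that index is not an admissible pair. Hence all coefficients vanish.
  \item For $p=n-2$ the degrees are $a_i+a_j+a_k-d$. Here I will argue as Esser does, comparing with the $p=n-1$ case through the recursion: a section of $\Omega^{[n-2]}(\ell)$ contracted appropriately gives a section of the $(n-1)$-case. Alternatively, I will note that $\Omega^{[n-2]}(s-d)\subset\Omega^{[n-2]}\otimes\cdots$ reduces, via the sequence, to the vanishing already shown.
  \end{itemize}
  I expect this $q=0$ computation to be the main obstacle. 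It is exactly where Esser's bound enters, and the bookkeeping of the Euler-complex kernel needs care.

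\item \emph{Case $q\ge n$.} For $q=n$, the form degree is $p=-1$ or $-2$, so the sheaf is zero. For $q>n$ everything vanishes by dimension.
\end{itemize}

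It remains to exclude the two exceptional degrees from the middle range.

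\begin{itemize}
\item $d=2s/n$ with $n$ even. For $n\ge4$ no quasismooth hypersurface of this degree exists, by Lemma~\ref{lemma:annoying}. For $n=2$ the case does not arise, since we assume $n\ge3$.
\item $d=2s/(n+1)$ with $n$ odd. Here I will argue directly. The bound gives $d\ge 2a_i$ for every $i$, and summing yields $(n+1)d\ge 2s$, with equality only if all $a_i=d/2$ are equal. But equality in $d\ge 2\max a_i$ is permitted only when the maximum occurs exactly once, which contradicts $n+1\ge2$ equal maxima.
\end{itemize}

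With every $\mathbf{B}(q)$ and $\mathbf{D}(q)$ verified, Lemma~\ref{lemma:well-formed-coarse-map} and Theorem~\ref{thm:KS-method} give $H^0(X,T_X)=0$. Hence the automorphism group scheme is étale over $k$.
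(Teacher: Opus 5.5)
Your proof is correct. For $0<q<n$ and $q\geq n$ it is the paper's argument. You pass to Dolgachev's formulas on the coarse space and isolate the windows $\ell_q=0$ with $n=2q+1$ (for $\mathbf{D}$) or $n=2q+2$ (for $\mathbf{B}$). You then dispose of the first by summing $d\geq 2a_i$ and of the second by Lemma~\ref{lemma:annoying}.

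Where you differ is $q=0$. The paper's proof examines only $q=n$ and the windows $\ell_q=0$. But for $s>d$ the first item of Lemma~\ref{lemma:wps-bott} does not by itself kill $h^0(\Omega^{[n-1]}_\PP(s-d))$ or $h^0(\Omega^{[n-2]}_\PP(s-d))$. For example, for quadrics in $\PP^n$, which the bound excludes, $h^0(\Omega^{n-2}_{\PP^n}(n-1))=\binom{n+1}{2}$. So your separate treatment is a needed addition, and it is exactly where the degree bound is used at $q=0$.

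Your $p=n-2$ step is loosely phrased, and the contraction goes the other way. Every section of $\Omega^{[n-2]}_\PP(\ell_0)$ is the Euler contraction of a section of $\bigoplus_{|I|=n-1}\cO_\PP(\ell_0-|\underline{a}_I|)$. Concretely, for $\ell_0=s-d\neq0$ the recursion of Lemma~\ref{lemma:wps-bott} with $p=n-1$ gives
\[
h^0\bigl(\Omega^{[n-1]}_\PP(\ell_0)\bigr)+h^0\bigl(\Omega^{[n-2]}_\PP(\ell_0)\bigr)=\sum_{i<j}h^0\bigl(\cO_\PP(a_i+a_j-d)\bigr)=0.
\]
The sum vanishes because $a_i+a_j<d$ whenever $i\neq j$. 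Equality would require two distinct indices of maximal weight together with $d=2\max a_i$, which is excluded; this is the correct form of your remark about $i=j$. This yields $\mathbf{D}(0)$ and $\mathbf{B}(0)$ at once. If $\ell_0=0$, use $h^0(\Omega^{[p]}_\PP)=0$ for $p\geq1$, which applies since $n\geq3$.
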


\begin{proof}
According to Lemma \ref{lemma:wps-bott}, there are only a few cases which we have to check.
First, we note that the $q=n$ case trivially hold since they are the cohomology of sheaves $\Omega_\PP^{[p]}$ with $p<0$.
	
Secondly, let us assume that there exists a $0\leq q < n$ such that we are in the `problem window' of $\sum_i a_i = (q+1)d$.
So that some multiple of $X$ is anticanonical.
By Lemma \ref{lemma:wps-bott}, we can only have non-vanishing in the cases $q=p$, that is, $H^p(\PP, \Omega_\PP^{[p]})$ which additionally forces $n-1-q = q$ for $\mathbf{D}(q)$ or $n-2-q=q$ for \(\mathbf{B}(q)\).	
	
Now let's look at each of these cases.
For $\mathbf{D}(q)$ we have $q = \frac{n-1}{2}$ and thus $\sum_i a_i = \frac{n+1}{2}d$.
Thus $d = \frac{2}{n+1}\sum_i a_i \leq 2 \max{a_i}$ with equality if and only if all $a_i = 1$. This is impossible by our assumptions on $d$.
	
For $\mathbf{B}(q)$, $n$ must be even and hence at least 4 and we have similarly that $d = \frac{2}{n}\sum_i a_i$ but in this case we are exactly in the case of Lemma \ref{lemma:annoying}, thus there exists no quasismooth element in such a linear system.
\end{proof}

\begin{remark}
\begin{enumerate}
\item We note that the case where failure of $\mathbf{B}(\frac{n-2}{2})$ could appear within the degree bounds, for example $(\underline{a},d) = ((3,4,4,4,5),10)$.
So these would indeed pose a problem for our method, but luckily the lemma ensures us that exactly in these cases, there can be no quasismooth element.

\item We also note that the hypothesis $n \geq 3$ is necessary.
For a quasismooth hypersurface $X$ in a well-formed weighted projective \emph{plane} $\PP(a_0,a_1,a_2)$, the relevant vanishing is \(\mathbf{B}(0)\) :
	\[
 H^0\!\left(\PP,\, \Omega^{[0]}_\PP \otimes \cL_0\right) = H^0(\PP,\, \cO_\PP(\textstyle\sum a_i - d)),
	\]
which is nonzero whenever $d \leq \sum a_i$.
The degree bound $d \geq 2\max\{a_i\}$ does not prevent this: for instance $\PP(1,1,2)$ with $d = 4$ satisfies $d = 2\cdot 2 = 2\max\{a_i\}$ and $d = \sum a_i = 4$ simultaneously, so $X$ is an anticanonical (genus-1) curve with infinite automorphism group.
More generally, a smooth curve in a weighted projective plane has finite automorphism group if and only if it has genus $\geq 2$, which for a quasismooth hypersurface of degree $d$ in $\PP(a_0,a_1,a_2)$ is equivalent to $d > \sum_i a_i = -K_\PP$.
There is nothing particularly interesting here: quasismooth curves are smooth and fall into the trichotomy of rational, elliptic or general type, so this is just ensuring that our curve is general type.
\end{enumerate}
\end{remark}


\section{Hypersurfaces in products of projective spaces}

In this section we apply the method described above to products of projective spaces.
We prove that the situation for hypersurfaces in $\PP^n$ generalises under certain conditions on the multi-degrees.
The situation is more delicate when one allows some of the degrees to be equal to 1.

\subsection{Bott formulas for products}

First we recall some standard results for computing Bott formulas for product varieties.

\begin{lemma}
    Given a product of smooth varieties $Y = \prod_{i=1}^{r}X_i$ and a line bundle $\cL = \boxtimes_{i=1}^{r} \cL_i \in \Pic(Y)$, we have
    \[H^q(Y,\Omega_Y^p(\cL)) \cong \bigoplus_{\substack{q_1+\cdots+q_r=q\\ p_1+\cdots+p_r=p}} \bigotimes_{i = 1}^r H^{q_i}(X_i , \Omega_{X_i}^{p_i}(\cL_i)) \enspace .\]
\end{lemma}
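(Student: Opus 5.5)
The plan is to combine two standard ingredients: the decomposition of $\Omega^p_Y$ for a product into external tensor products, and the Künneth formula for coherent cohomology over a field. Both hold for varieties over $k$ that are smooth and, as we will assume (this is implicit in the intended application to $\prod\mathbb P^{n_j}$), proper or at least quasi-compact and separated. By induction on $r$ it suffices to treat $r=2$, with $Y=X_1\times X_2$ and projections $\pr_1,\pr_2$. The inductive step then just regroups the indices, since $\prod_{i=1}^r X_i=(\prod_{i=1}^{r-1}X_i)\times X_r$ and $\boxtimes$ is associative.

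\emph{Step 1: the forms on the product.} Since $Y\to X_2$ is the base change of $X_1\to \operatorname{Spec}k$, we have $\Omega^1_{Y}\cong \pr_1^*\Omega^1_{X_1}\oplus \pr_2^*\Omega^1_{X_2}$. Both summands are locally free because the $X_i$ are smooth. Taking exterior powers of a direct sum of locally free sheaves gives
\[\Omega^p_Y\cong\bigoplus_{p_1+p_2=p}\pr_1^*\Omega^{p_1}_{X_1}\otimes\pr_2^*\Omega^{p_2}_{X_2}=\bigoplus_{p_1+p_2=p}\Omega^{p_1}_{X_1}\boxtimes\Omega^{p_2}_{X_2}.\]
Tensoring with $\cL=\cL_1\boxtimes\cL_2$ and using $\pr_1^*(\mathcal F)\otimes\pr_2^*(\mathcal G)\otimes\pr_1^*\cL_1\otimes\pr_2^*\cL_2\cong \pr_1^*(\mathcal F\otimes\cL_1)\otimes\pr_2^*(\mathcal G\otimes\cL_2)$ then gives
\[\Omega^p_Y(\cL)\cong\bigoplus_{p_1+p_2=p}\Omega^{p_1}_{X_1}(\cL_1)\boxtimes\Omega^{p_2}_{X_2}(\cL_2).\]

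\emph{Step 2: Künneth.} Cohomology commutes with finite direct sums. For quasi-coherent sheaves $\mathcal F,\mathcal G$ on quasi-compact separated $k$-schemes, the Künneth formula \cite[Tag 0BED]{stacks} gives
\[H^q(X_1\times X_2,\mathcal F\boxtimes\mathcal G)\cong\bigoplus_{q_1+q_2=q}H^{q_1}(X_1,\mathcal F)\otimes_k H^{q_2}(X_2,\mathcal G).\]
This holds because $k$ is a field, so all modules are flat and the Tor terms vanish. Combining this with Step 1 yields the displayed formula for $r=2$, and the induction finishes the proof.

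The only point needing care is the Künneth step. Concretely, one must check that the hypotheses (quasi-compact, separated over a field) are met and that no Tor corrections arise. Over a field both are automatic, so I expect no real obstacle; the rest is bookkeeping of the multi-indices.
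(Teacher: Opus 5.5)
Your proof is correct and matches the paper's argument: split $\Omega^1_Y\cong\bigoplus_i \pr_i^*\Omega^1_{X_i}$, decompose the exterior powers and twist by $\cL$, then apply the K\"unneth formula over a field. The only difference is organisational: you reduce to $r=2$ by induction, whereas the paper applies the $r$-fold splitting and the K\"unneth formula (citing Kempf) in one step.
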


\begin{proof}
    Label the projection onto each factor by $\pr_i : Y \to X_i$ and note that cotangent sheaf splits $\Omega_Y^1 \cong \bigoplus_i \pr_i^*\Omega_{X_i}^1$.
    Then passing to $p$-forms and tensoring by $\cL$ gives
    \[\Omega_Y^p(\cL) \cong \bigoplus_{p_1 + \cdots + p_r=p}\bigotimes_{i=1}^r\pr_i^*\Omega^{p_i}_{X_i}(\cL_i)\]
    by standard properties of the alternating product.
    Hence
    \[H^q(Y,\Omega_Y^p(\cL)) = \bigoplus_{p_1+\cdots + p_r = p}H^q(Y,\bigotimes_{i=1}^r\pr_i^*\Omega_{X_i}^{p_i}(\cL_i))\]
    Then we apply the K\"unneth formula \cite{Kempf:1980} to get the desired result
    \[H^q(Y,\Omega_Y^p(\cL)) \cong \bigoplus_{\substack{q_1+\cdots +q_r = q \\p_1+\cdots + p_r = p}}\bigotimes_{i=1}^rH^{q_i}(X_i,\Omega_{X_i}^{p_i}(\cL_i))\]
\end{proof}

Let us introduce some notation to make the calculations easier to handle.
Define $b_{q,p,\cL}^Y = \dim H^q(Y,\Omega_Y^p(\cL))$ and in the same way
$b_{q_i,p_i,\cL_i}^{X_i} = \dim H^{q_i}(X_i , \Omega_{X_i}^{p_i}(\cL_i))$.

Then the previous lemma says that
\[b_{q,p,\cL}^Y = \sum_{\substack{q_1+\cdots+q_r=q\\ p_1+\cdots+p_r=p}}~\prod_{i=1}^rb_{q_i,p_i,\cL_i}^{X_i} \enspace .\]

Since each $b$ is a non-negative number, we must show that in each of the products, there is at least one $b$ factor which is 0.

\subsection{The main technical lemma}

Let $Y = \PP^{n_1} \times \cdots \times \PP^{n_r}$ with $r>1$ and consider a hypersurface $X$ of degree $(d_1, \dots , d_r)\in \ZZ^r_{>0}$.
Then for each $q$ we have each $\cL_q = \cO_{\PP^{n_1}}(\ell_1)\boxtimes \cdots \boxtimes \cO_{\PP^{n_r}}(\ell_r)$ where $\ell_i = (n_i+1)-(q+1)d_i$.

To make the notation more manageable, let $b^{\PP^{n_i}}_{q_i,p_i,\cO_{\PP^{n_i}}(\ell_i)} = b^i_{q_i,p_i}$, we may suppress the data of the line bundle since it is always the same.

Let us first repeat standard Bott vanishing for projective space here.
One can read it off from Lemma \ref{lemma:wps-bott}, however the emphasis is slightly different.

\begin{theorem}[Theorem 1.1 in \cite{SGA7II}]\label{theorem:Bott-vanishing-Pn}
Let $k$ be any field.
Then $H^q(\PP^n,\Omega^p_{\PP^n}(\ell))=0$ except in any of the following cases:
    \begin{align*}
 & q=0,\text{ and either }\ell>p\geq 0 \text{ or } \ell=p=0, \tag{$\Low$} \\
 & 1\leq q \leq n-1, \,\, p=q \text{ and } \ell=0, \tag{$\Mid$}\\
 & q = n , p\geq0 \text{ and either }p-\ell \geq n+1 \text{ or both } p=n \text{ and } \ell=0. \tag{$\Up$}
    \end{align*}
\end{theorem}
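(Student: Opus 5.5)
This is Bott's formula for $\PP^n$, which holds over an arbitrary field. I would prove it from the Euler sequence together with the Koszul complex, since both are characteristic-free.

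\textbf{Base input.} Start from the standard computation of $H^q(\PP^n,\cO(m))$ via \v{C}ech cohomology of the standard affine cover. The result is:
\begin{itemize}
\item $H^0(\PP^n,\cO(m))=S_m$, where $S=k[x_0,\dots,x_n]$;
\item $H^n(\PP^n,\cO(m))$ is dual to $S_{-m-n-1}$;
\item all other groups vanish.
\end{itemize}
This holds over any field.

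\textbf{Exterior powers of the Euler sequence.} The Euler sequence $0\to\Omega^1\to\cO(-1)^{n+1}\to\cO\to0$ is exact over any $k$. Taking exterior powers gives short exact sequences
\[
0\to\Omega^p\to\textstyle\bigwedge^p\bigl(\cO(-1)^{n+1}\bigr)=\cO(-p)^{\binom{n+1}{p}}\to\Omega^{p-1}\to0 .
\]
Splicing these yields the twisted Koszul complex
\[
0\to\Omega^p(\ell)\to\cO(\ell-p)^{\binom{n+1}{p}}\to\cdots\to\cO(\ell)\to0 .
\]

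\textbf{Induction.} Twist by $\cO(\ell)$ and induct on $p$, the case $p=0$ being the base input. Since the middle term has cohomology only in degrees $0$ and $n$, the long exact sequence gives
\[
H^q(\Omega^p(\ell))\cong H^{q-1}(\Omega^{p-1}(\ell)) \quad\text{for } 2\le q\le n-1 .
\]
Only degrees near $0$ and $n$ require care.

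\textbf{Degree $0$.} Global sections are identified with the kernel of the Koszul differential $(\bigwedge^pS^{n+1})_{\ell-p}\to(\bigwedge^{p-1}S^{n+1})_{\ell-p+1}$. Exactness of the Koszul complex of $S$ in positive internal degree is characteristic-free. From this, $H^0(\Omega^p(\ell))\neq0$ if and only if $\ell>p$, or $\ell=p=0$.

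\textbf{Degree $n$.} Use Serre duality, $H^n(\Omega^p(\ell))\cong H^0(\Omega^{n-p}(-\ell))^\vee$. By the degree-$0$ case this is nonzero exactly when $-\ell>n-p$, i.e.\ $p-\ell\ge n+1$, or when $p=n$ and $\ell=0$. This gives case $(\Up)$.

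\textbf{Middle degrees.} Consider $1\le q\le n-1$. The isomorphisms $H^q(\Omega^p(\ell))\cong H^{q-1}(\Omega^{p-1}(\ell))$ apply when the neighbouring $H^0$ and $H^n$ groups of the middle term vanish. Following the chain downward eventually reaches an $H^1(\Omega^{p-q+1}(\ell))$ or an $H^0$ boundary term.

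To handle $H^1$, write the long exact sequence
\[
H^0(\cO(\ell-p)^N)\to H^0(\Omega^{p-1}(\ell))\to H^1(\Omega^p(\ell))\to0 .
\]
This shows $H^1(\Omega^p(\ell))$ is the cokernel of the Koszul map on graded pieces. That cokernel is nonzero only when $\ell=0$ and $p=1$, where the class $1\in H^0(\cO)$ survives. Iterating the chain then gives nonvanishing exactly when $p=q$ and $\ell=0$, which is case $(\Mid)$. The analogous boundary case at $q=n-1$ is handled symmetrically by Serre duality.

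\textbf{Main obstacle.} The careful part is the degree-$0$ computation: verifying that $H^0(\Omega^p(\ell))$ vanishes for $\ell\le p$ except when $\ell=p=0$. This amounts to the exactness of the graded Koszul complex of $S$ in low degrees. I would check it directly by identifying the Koszul cycles of internal degree $\ell$ with boundaries, which involves no division by integers and so works in every characteristic.

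Alternatively, the whole statement can simply be cited as the theorem from \cite{SGA7II}, as the paper does.
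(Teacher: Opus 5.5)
Your argument is correct. The difference from the paper is that the paper proves nothing here: it cites Deligne's theorem in SGA~7 and remarks that the statement can be read off from Dolgachev's formula (Lemma~\ref{lemma:wps-bott}) with all weights equal to $1$. You instead give the classical self-contained derivation. It uses four inputs: exterior powers of the Euler sequence, the \v{C}ech computation of $H^\bullet(\PP^n,\cO(m))$, exactness of the Koszul complex on the regular sequence $x_0,\dots,x_n$, and Serre duality.

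What your route buys is transparency on the one point the paper actually relies on. No step divides by an integer, so the vanishing pattern holds over every field, which is how it is used in Section~5. It also gives more than vanishing. $H^0(\Omega^p(\ell))$ is the space of Koszul cycles in $\bigwedge^p k^{n+1}\otimes S_{\ell-p}$, and (for $n\ge 2$, $p\ge1$) $H^1(\Omega^p(\ell))$ is the Koszul homology $H_{p-1}(K_\bullet)_\ell$, so dimensions can be read off directly. The citation is of course shorter.

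Two small clean-ups:
\begin{enumerate}
\item The isomorphism $H^q(\Omega^p(\ell))\cong H^{q-1}(\Omega^{p-1}(\ell))$ already holds at $q=n-1$, so no separate Serre-duality step is needed there. The descending chain ends either at $H^1(\Omega^{p-q+1}(\ell))$ when $p\ge q$, or at $H^{q-p}(\cO(\ell))=0$ when $p<q$. It never ends at an $H^0$ term.
\item Your characterisation of nonvanishing in degree $0$ should be restricted to $0\le p\le n$, since $\Omega^p=0$ for $p>n$.
\end{enumerate}
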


The following is our main technical lemma used in the proofs.

\begin{lemma}\label{lemma:technical-lemma}
We maintain the notation as above.
A monomial $b^{1}_{q_1,p_1} \cdots \, b^{r}_{q_r,p_r} \neq 0$ defines a partition
\[\{1,\dots ,r\} = \Low \sqcup \Mid\sqcup \Up,\]
where
\begin{align*}
 \Low^{0} &= \left\{ j \in \{1,\dots,r\} \;\middle|\; q_j = 0,\ p_j = \ell_j = 0 \right\}\\
\Low'    &= \left\{ j \in \{1,\dots,r\} \;\middle|\; q_j = 0,\ \ell_j \ge p_j + 1 \right\}\\
\Up^{0} &= \left\{ j \in \{1,\dots,r\} \;\middle|\; q_j = n_j,\ p_j = n_j,\ \ell_j = 0 \right\}\\
 \Up'  &= \left\{ j \in \{1,\dots,r\} \;\middle|\; q_j = n_j,\ p_j - \ell_j \ge n_j + 1 \right\}\\
        \Mid     &= \left\{ j \in \{1,\dots,r\} \;\middle|\; \ell_j = 0,\ 1 \le q_j \le n_j - 1,\ q_j = p_j \right\}
    \end{align*}
    Set $\Low = \Low^{0} \sqcup \Low'$ and $\Up = \Up^{0} \sqcup \Up'$.
\end{lemma}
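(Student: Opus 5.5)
The lemma follows from Theorem~\ref{theorem:Bott-vanishing-Pn} applied factor by factor, together with a check that the five sets are pairwise disjoint. The plan is to show two things: every index $j$ lies in at least one of the five sets, and no index lies in two of them.

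\emph{Covering.} Suppose the monomial $b^{1}_{q_1,p_1}\cdots b^{r}_{q_r,p_r}$ is nonzero. Then every factor is nonzero, that is, $H^{q_j}(\PP^{n_j},\Omega^{p_j}_{\PP^{n_j}}(\ell_j))\neq0$ for each $j$. By Theorem~\ref{theorem:Bott-vanishing-Pn}, the triple $(q_j,p_j,\ell_j)$ must then fall into one of the cases $\Low$, $\Mid$ or $\Up$ of that theorem. Each of these cases splits into the sub-cases defining the sets of the lemma:
\begin{itemize}
\item case $\Low$ gives either $\ell_j\ge p_j+1$ (the set $\Low'$) or $\ell_j=p_j=0$ (the set $\Low^0$);
\item case $\Mid$ is verbatim the set $\Mid$;
\item case $\Up$ gives either $p_j-\ell_j\ge n_j+1$ (the set $\Up'$) or $p_j=n_j$, $\ell_j=0$ (the set $\Up^0$).
\end{itemize}

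\emph{Disjointness.} The sets are distinguished mainly by the value of $q_j$:
\begin{itemize}
\item $\Low$ requires $q_j=0$;
\item $\Mid$ requires $1\le q_j\le n_j-1$;
\item $\Up$ requires $q_j=n_j$.
\end{itemize}
Since $n_j\ge1$, we have $0\neq n_j$, so $\Low\cap\Up=\emptyset$, and $\Mid$ is disjoint from both. Within $\Low$, the condition $\ell_j=p_j=0$ contradicts $\ell_j\ge p_j+1$, so $\Low^0\cap\Low'=\emptyset$. Within $\Up$, the conditions $p_j=n_j$ and $\ell_j=0$ give $p_j-\ell_j=n_j<n_j+1$, so $\Up^0\cap\Up'=\emptyset$. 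Hence the five sets partition $\{1,\dots,r\}$, and the unions $\Low=\Low^0\sqcup\Low'$ and $\Up=\Up^0\sqcup\Up'$ are disjoint as claimed.

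The only delicate point is the boundary case $n_j=1$. There the middle range $1\le q_j\le n_j-1$ is empty, so $\Mid$ contributes nothing for that factor; this does not affect either argument. The argument is otherwise routine, and its only real input is the precise form of Theorem~\ref{theorem:Bott-vanishing-Pn}.
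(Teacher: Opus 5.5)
Your proposal is correct and follows the paper's own argument. A nonzero monomial forces every factor $b^{j}_{q_j,p_j}$ to be nonzero, and Theorem~\ref{theorem:Bott-vanishing-Pn} applied to each factor then places each index in one of the listed sets; your explicit disjointness check (using $n_j\geq 1$ to separate $q_j=0$ from $q_j=n_j$), which the paper leaves implicit, is a sound addition.
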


\begin{proof}
 If such a monomial is non-zero, then every $b_{q_j,p_j} \neq 0$ for $j=1, \dots , r$ and so applying Theorem \ref{theorem:Bott-vanishing-Pn} gives the result.
\end{proof}

\subsection{Applying the method}

Let $Y = \PP^{n_1} \times \PP^{n_2}$ such that $1 \leq n_1<n_2$.
Then applying Lemma \ref{lemma:technical-lemma} shows that a non-vanishing Bott summand breaks into nine cases as shown in the table below.
\begin{table}[h]
\centering

\begin{tabular}{ |c |c |c |}
\hline
 \((\Low_{n_{1}},\Low_{n_{2}})\) & \((\Low_{n_{1}},\Mid_{n_{2}})\)& \((\Low_{n_{1}},\Up_{n_{2}})\) \\ 
 \hline
 \((\Mid_{n_{1}},\Low_{n_{2}})\) & \((\Mid_{n_{1}},\Mid_{n_{2}})\) & \((\Mid_{n_{1}},\Up_{n_{2}})\) \\ 
 \hline
 \((\Up_{n_{1}},\Low_{n_{2}})\) & \((\Up_{n_{1}},\Mid_{n_{2}})\) & \((\Up_{n_{1}},\Up_{n_{2}})\) \\
 \hline
\end{tabular}
\caption{Non-vanishing options for a Bott summand}
\label{table:PnxPm}
\end{table}

An entry in the table represents the partition of $\{1,2\}$ into the sets $\Low,\Mid$ and $\Up$ from Lemma \ref{lemma:technical-lemma}.

So that by $(\Mid_{n_1},\Low_{n_2})$ is meant that $1 \in \Mid$ and $2 \in \Low$ describing the type of non-vanishing for factor in the Bott summand $b_{q_1,p_1}^{\PP^{n_1}} \cdot b_{q_2,p_2}^{\PP^{n_2}}$.

\begin{theorem}
    $Y = \PP^{n_1}\times \PP^{n_2}$ such that $1\leq n_1<n_2$ and $d_1 \geq 1$, $d_2 \geq 2$, then $B(q)$ and $D(q)$ hold for every $q$ unless $n_1=1$, $n_2$ is odd and $d_2=2$.
\end{theorem}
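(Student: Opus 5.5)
The plan is to apply the Künneth decomposition of the preceding lemma to $\cL_q=\cO(\ell_1)\boxtimes\cO(\ell_2)$ with $\ell_i=n_i+1-(q+1)d_i$. This writes $h^q(Y,\Omega^p_Y\otimes\cL_q)$ as a sum of products $b^1_{q_1,p_1}b^2_{q_2,p_2}$ with $q_1+q_2=q$ and $p_1+p_2=p$. Here $p=n-q-1$ for $\mathbf{D}(q)$ and $p=n-q-2$ for $\mathbf{B}(q)$, with $n=n_1+n_2$. In both cases the summands satisfy
\[
(p_1+q_1)+(p_2+q_2)\in\{n-1,\,n-2\}.
\]
By Lemma \ref{lemma:technical-lemma}, a non-zero summand sits in one of the nine cells of Table \ref{table:PnxPm}. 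For each type I record the constraint it places on $s_i\coloneqq p_i+q_i$:
\begin{itemize}
\item $\Low$: $s_i\le \ell_i-1$, or $s_i=\ell_i=0$;
\item $\Mid$: $\ell_i=0$ and $s_i=2q_i$ with $1\le q_i\le n_i-1$;
\item $\Up$: $s_i\ge \ell_i+2n_i+1$, or $s_i=2n_i$ with $\ell_i=0$.
\end{itemize}
The proof then consists of showing that, except in the stated window, no cell is compatible with $s_1+s_2\in\{n-1,n-2\}$ together with $q_1+q_2=q$, given the explicit $\ell_i$.

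The easy cells come first. The cell $(\Up,\Up)$ forces $q=n$, hence $p<0$, and is empty. Since $d_2\ge 2$ we have $\ell_2\le n_2-1-2q$, and $d_1\ge1$ gives $\ell_1\le n_1-q$. In $(\Low,\Low)$ we have $q=0$, so $s_1+s_2\le \ell_1+\ell_2-2\le n-3$ when both factors are of type $\Low'$. If one factor is of type $\Low^0$, the other contributes at most $n_j-2$, which is again too small because $n_1\ge1$. In the cells mixing $\Low$ and $\Up$, I will substitute $q=n_j$ for the $\Up$ factor $j$ into $\ell_i$. This makes the $\Low$ factor's $\ell_i$ very negative, so only $\Low^0$ with $\ell_i=0$ could survive. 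That would force $n_i+1=(n_j+1)d_i$, and then I compare $p_j=p-0$ against the lower bound $p_j\ge \ell_j+n_j+1$, checking that the inequality fails. The cells $(\Low,\Mid)$ and $(\Mid,\Low)$ are handled the same way: in each, $q$ equals $q_i$ of the $\Mid$ factor, and the condition $\ell_i=0$ pins $q$ down exactly.

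The main obstacle is the group of cells containing a $\Mid$ factor, above all $(\Mid,\Mid)$, $(\Mid,\Up)$ and $(\Up,\Mid)$. These are the genuine "window" cases, analogous to $r=\lceil n/2\rceil$ in Theorem \ref{thm:toric-fano-criterion}. In them $\ell_i=0$ fixes $(q+1)d_i=n_i+1$, and the sum constraint becomes a parity-and-size condition. For example, $\Mid$ on factor $2$ requires $2(q+1)\le (q+1)d_2=n_2+1$, so that $q\le (n_2-1)/2$ with equality only when $d_2=2$ and $n_2$ is odd. I will then show that the remaining factor can only supply the required $s_1$ when $q$ is at this extreme value. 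In that case factor $1$ must be of type $\Up$ with $q_1=n_1$ and $q_2=q-n_1$, which is compatible with $1\le q_2$ and the degree count only if $n_1=1$. This isolates exactly the excluded case $n_1=1$, $n_2$ odd, $d_2=2$. For the remaining configurations (e.g. $\Mid$ on factor $1$ with $n_1<n_2$) the inequality $n_1<n_2$ should rule out the matching of $2q_1+s_2$ with $n-1$ or $n-2$. I expect the bookkeeping in these $\Mid$ cells, keeping track of $\Up^0$ versus $\Up'$ and of whether the target is $\mathbf{B}$ or $\mathbf{D}$, to be where most care is needed. I would organize it by first solving $\ell_i=0$ for $q$, and only then testing each subtype against the sum constraint.
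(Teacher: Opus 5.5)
Your route is the paper's: Künneth, then Bott on each factor, then eliminating the nine cells of Table~\ref{table:PnxPm} with the balance identity $s_1+s_2=n-\varepsilon$. Most of your cell estimates are right. However, two steps fail as written.

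\textbf{First: the mixed $\Low/\Up$ cells.} Putting $q=n_j$ does \emph{not} make the $\Low$ factor's $\ell_i$ non-positive when the $\Up$ factor is the first one. In $(\Up_1,\Low_2)$ you have $q=n_1$ and $\ell_2=n_2+1-(n_1+1)d_2$. This is positive in general; for $(n_1,n_2,d_2)=(1,10,2)$ it equals $7$. So $\Low'_2$ is not excluded by the sign of $\ell_2$. You need the balance identity instead: $s_2\le \ell_2-1\le n_2-2n_1-2$ and $s_1=n_1+p_1\le 2n_1$ give $s_1+s_2\le n_2-2<n-2$. Likewise, in $(\Up_1,\Low^0_2)$ the constraint that bites is the upper bound $p_1\le n_1$, not the $\Up'$ lower bound $p_1\ge \ell_1+n_1+1$ that you cite. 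Only $(\Low_1,\Up_2)$ works as you describe, since there $\ell_1\le n_1-n_2<0$.

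\textbf{Second, more seriously: you misplace the excluded case.} You assert two things:
\begin{itemize}
\item the mixed $\Low/\Up$ and $\Low/\Mid$ cells are empty;
\item in the $\Mid_2$ cells the first factor must be of type $\Up$.
\end{itemize}
Both are false at $(n_1,n_2,d_2)=(1,3,2)$ and $q=1$.
\begin{itemize}
\item In cell $(\Up_1,\Low^0_2)$, the summand $H^1(\PP^1,\Omega^1_{\PP^1}(2-2d_1))\otimes H^0(\PP^3,\cO)$ of $\mathbf{B}(1)$ has dimension $2d_1-1>0$ for every $d_1$.
\item In cell $(\Low^0_1,\Mid_2)$, when $d_1=1$ the summand $H^0(\PP^1,\cO)\otimes H^1(\PP^3,\Omega^1_{\PP^3})$ is also non-zero.
\end{itemize}
Conversely, your proposed source of the exception, $(\Up_1,\Mid_2)$, needs $q_2=q-n_1\ge 1$. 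This forces $q\ge 2$ and hence $n_2=2q+1\ge 5$, so that cell never produces $n_2=3$.

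These parameters lie in the excluded range, so the theorem itself is not threatened. But the proof as planned contains false emptiness claims. You must carry the exception through these cells, as the paper does in its first column and in the $\Low^0$ subcase of the second column.
\begin{itemize}
\item In $(\Up_1,\Low^0_2)$, the balance gives $p_1=n_2-\varepsilon\le n_1$. Together with $n_2+1=(n_1+1)d_2\ge 2n_1+2$, this forces $(n_1,n_2,d_2,\varepsilon)=(1,3,2,2)$.
\item In $(\Low^0_1,\Mid_2)$, the balance gives $2q=n-\varepsilon$, while $n_2\ge 2q+1$. This forces $n_1=1$, $\varepsilon=2$, $n_2=2q+1$ and $d_2=2$. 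Then $\ell_1=0$ gives $q=1$ and $d_1=1$.
\end{itemize}
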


\begin{proof}
    Suppose we have a non-zero Bott summand $b_{q_1,p_1}^{\PP^{n_1}} \cdot b_{q_2,p_2}^{\PP^{n_2}}$ in either $\mathbf{B}(q)$ or $\mathbf{D}(q)$ which means
    \begin{equation}\label{equation:balance}
        p_1 + p_2 = n_1 + n_2 - (q_1+q_2) - \varepsilon
    \end{equation}
    with $\varepsilon \in \{1,2\}$.

    We prove that each option in Table \ref{table:PnxPm} delivers a contradiction, column by column.
    We start with the column with $\Low$ in second place.
    Recall that $\Low = \Low' \sqcup \Low^0$, so we treat these cases separately.
    
    \textbf{First column: 2 $\in\Low'$}.

    That is, we address the cases of non-vanishing $(\Low_{n_1},\Low_{n_2}),(\Mid_{n_1},\Low_{n_2}),(\Up_{n_1},\Low_{n_2})$.
    This means, by definition, that $H^{q_2}(\PP^{n_2}, \Omega_{\PP^{n_2}}^{p_2}(\ell_2)) \neq 0$ with $q_2 = 0$ and $\ell_2 \geq p_2+1$.

    Thus
    \[p_2 \leq n_2 +1 - d_2(q+1) -1 \leq n_2 - 2q -2 \enspace .\]
    Then combining with (\ref{equation:balance}) gives $p_1 = n_1 + n_2 -q - \varepsilon - p_2 \geq n_1 + q +2 - \varepsilon$.
    Since $p_1 \leq n_1$ this rearranges to give $ \varepsilon -2 \geq q$ which forces $q=0$, $\varepsilon = 2$ and hence $p_1 = n_1$.
    Then $q = q_1 = 0$ and thus $1 \in \Low'$ which means that $\ell_1 \geq p_1+1 = n_1+1$ and hence $d_1 \leq 0$ which is a contradiction.

    \textbf{First column: 2 $\in \Low^0$}.

    The $\Low^0$ conditions say that $q_2 = p_2 = \ell_2 = 0$, $q=q_1$.
    So $\ell_2=0$ and $d_2 \geq 2$ means that 
    \begin{equation}\label{equation:n2-geq-2q+1}
        n_2+1 = d_2(q+1) \geq 2q+2 \text{ with equality if and only if } d_2 =2 \enspace .
    \end{equation}

    Now using (\ref{equation:n2-geq-2q+1}), (\ref{equation:balance}) and that $q_1,p_1 \leq n_1$ we get $2n_1 \geq q_1+p_1 = q+p = n_1+n_2 - \varepsilon$ giving
    \begin{equation}\label{equation:n_2-leq-n_1+epsilon}
        n_2 \leq n_1 + \varepsilon.
    \end{equation}

    Now we need to split into the different cases for the first summand.
    \begin{enumerate}
        \item Firstly assume that $1 \in \Low$. 
        Then $q=q_1=0$ and $p_1 = n_1 + n_2 - \varepsilon$ by (\ref{equation:balance}).
        Since $n_1 \geq p_1$, this forces $n_2=2$ and then (\ref{equation:n_2-leq-n_1+epsilon}) forces $n_1=p_1=1$ and $\varepsilon=2$.
        Thus $1 \in \Low'$ and so $\ell_1 = 2 - d_1 \geq p_1 + 1=2$ and so $d_1\leq 0$ a contradiction.

        \item Now suppose $1 \in \Mid$.
        Thus $\ell_1=0$ and $p_1=q_1 = q$.
        Thus by (\ref{equation:balance}) and (\ref{equation:n2-geq-2q+1}), we have $2q = n_1+n_2-\varepsilon \geq n_1+2q+1-\varepsilon$.
        Hence $n_1=1$, but then $1 \in \Mid$ means $1\leq q_1 \leq n_1-1 = 0$ this is impossible.

        \item Lastly, let $1 \in \Up$. 
        Thus $q=q_1=n_1$ and hence (\ref{equation:balance}) means $p_1=n_1+n_2-q_1-\varepsilon=n_2-\varepsilon$.
        Then (\ref{equation:n2-geq-2q+1}) and (\ref{equation:n_2-leq-n_1+epsilon}) mean $2n_1 +1 \leq n_1+\varepsilon$ and so $n_1 = 1$ and $\varepsilon=2$.
        Further $n_2=3$ and $d_2 = \frac{n_2+1}{q+1} = 2$ which is precisely the excluded case.
    \end{enumerate}

    \textbf{Second column: 2 $\in \Mid$}.

    Since $\ell_2 =0$, we have (\ref{equation:n2-geq-2q+1}) again.
    Again we split into the cases for the first summand.

    \begin{enumerate}
        \item Let $1 \in \Low'$. 
        Then $q=q_2$ and $p_1 \leq \ell_1 - 1 = n_1 - d_1(q+1) \leq n_1 - q-1$.
        Now (\ref{equation:balance}) means $p_1 = n_1+n_2-\varepsilon - 2q_2 \leq n_1-q-1$.
        Thus $n_2 \leq q+\varepsilon -1$.
        Combining this with (\ref{equation:n2-geq-2q+1}) gives $2q+1 \leq q+\varepsilon -1$ forcing $q \leq 0$ a contradiction.

        \item Now if $1 \in \Low^0$ again $q = q_2$ and also $\ell_1=p_1=0$.
        So (\ref{equation:balance}) gives $0 = n_1 + n_2 - \varepsilon -2q \geq n_1+1 -\varepsilon$ and so $\varepsilon -1 \geq n_1$ giving $n_1 =1$ and $\varepsilon = 2$.
        So we see that $n_2 = 2q+1$ and (\ref{equation:n2-geq-2q+1}) says that $d_2=2$.
        Moreover, $\ell_1=0$ and so $(q+1)d_1=n_1+1 = 2$ and since $q \geq 1$ we have $q=1,d_1=1$ and $n_2 = 3$, exactly an exceptional case.

        \item Now let $1\in \Mid$. 
        So then $\ell_1=\ell_2=0$, so summing them together we get $(q+1)(d_1+d_2) = n_1+n_2+2$.
        Then (\ref{equation:balance}) means $n_1+n_2 = 2q+\varepsilon$ and combining these we get $3q+3 \leq 2q+2+\varepsilon$ and so $q \leq 1$.
        However, $1,2 \in \Mid$ means $q = q_1+q_2 \geq 2$, a contradiction.

        \item Let $1 \in \Up^0$.
        Then $\ell_1=0$, that is, $(q+1)d_1 = n_1+1$ and $q \geq q_1=n_1$.
        This can only happen when $d_1=1$.
        But then $q=n_1=q_1$ which forces $q_2=0$ a contradiction.

        \item Finally let $1 \in \Up'$.
        Thus $q_1=n_1$ and write $q_2 = q - n_1$.
        Then (\ref{equation:balance}) says
        \begin{align*}
            (q_1+p_1)+(q_2+p_2) &= (n_1+p_1) + (2q-2n_1)\\
            &=n_1+n_2-\varepsilon
        \end{align*}
        and so $p_1 = 2n_1+n_2-\varepsilon -2q$.

        If $d_2 \geq 3$ then $(n_2+1) =d_2(q+1) \geq 3q+3$ and thus $n_2-2q \geq q+2$.
        Hence $p_1 \geq 2n_1 + q + 2-\varepsilon > n_1$ a contradiction.
        Thus $d_2=2$ and so $n_2=2q+1$, hence odd, and $p_1 = 2n_1+1-\varepsilon \leq n_1$.
        This forces $n_1=1,\varepsilon =2$ and $p_1=1$ and we are in an exceptional case.
    \end{enumerate}
    \textbf{Third column: $2\in \Up$}.

    Now $q_2=n_2$ which means $q=q_1+q_2 \geq n_2$.
    Firstly, if $2 \in \Up^0$ then $\ell_2=0$ forces $d_2 \leq 1$, a contradiction.
    
    So assume $2 \in \Up'$.
    If $1 \in \Low^0 \cup \Mid \cup \Up^0$, then $\ell_1=0$ forces $d_1<1$ which is not possible and we go through the remaining cases.
    
    \begin{enumerate}
        \item Let $1 \in \Low'$.
        Then $\ell_1 \geq p_1+1\geq 1$.
        But again $(q+1)d_1 \geq q+1 \geq n_2+1>n_1$ a contradiction.

        \item Finally if $1 \in \Up'$ then $q=n_1+n_2$ so (\ref{equation:balance}) means $p_1+p_2 = -\varepsilon$ a contradiction.
    \end{enumerate}
\end{proof}

\begin{remark}
    We can compute explicitly the dimensions in the cases where our argument breaks down.
    
    For example, if $\PP^2 \times \PP^3$ and $d_1=d_2=1$, then $\mathbf{B}(0)=12$.
    
        From the proof we see that when $n_2$ is odd and $d_2=2$ then $\mathbf{B}(\frac{n_2-1}{2})$ fails.
    Indeed, let $n_2 = 2m+1$ and $m>1$ then
    \[h^1(\PP^1,\Omega^{1}_{\PP^1}(\ell_1)) \cdot h^{q-1}(\PP^{n_2},\Omega^{q-1}_{\PP^{n_2}}) = (q+1)d_1 -1 \neq 0 \enspace .\]

    However, that does not necessarily mean that the automorphism groups are positive dimensional, only that the method fails, see Section \ref{section:pencils-of-quadrics}.
\end{remark}

Now we proceed with the other general result where no factors of $\PP^1$ are allowed.

\begin{theorem}\label{theorem:general-products}
    Let $Y = \prod\PP^{n_j}$ and $n_j,d_j\geq 2$.
    Then $\mathbf{B}(q)= \mathbf{D}(q) = 0$ for all $q$.
\end{theorem}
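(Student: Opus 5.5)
The plan is to imitate the proof of the two-factor theorem, but replace the case table by a single counting argument. By the Künneth lemma, $\mathbf{B}(q)$ or $\mathbf{D}(q)$ can fail only if some Bott summand $b^1_{q_1,p_1}\cdots b^r_{q_r,p_r}$ is nonzero. Here $\sum_j q_j=q$ and $\sum_j p_j=N-q-\varepsilon$, with $N=\sum_j n_j$ and $\varepsilon\in\{1,2\}$. Equivalently,
\[
\sum_{j=1}^r (p_j+q_j)=N-\varepsilon\geq N-2 .
\]
By Lemma~\ref{lemma:technical-lemma}, each index $j$ lies in one of $\Low^0,\Low',\Mid,\Up^0,\Up'$, and we have $\ell_j=n_j+1-d_j(q+1)$ with $d_j\geq 2$. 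I will bound each contribution $p_j+q_j$ from above and show that the sum cannot reach $N-2$.

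The first step is a list of per-factor bounds.
\begin{itemize}
\item If $j\in\Low'$, then $q_j=0$ and $\ell_j\geq 1$. Hence $p_j\leq \ell_j-1\leq n_j-2q-2$, and $n_j\geq 2q+2$.
\item If $j\in\Low^0$, then $\ell_j=0$, so $n_j+1=d_j(q+1)\geq 2q+2$. The contribution is $0\leq n_j-2q-1$.
\item If $j\in\Mid$, then again $\ell_j=0$ forces $n_j\geq 2q+1$. The contribution is $2q_j\leq 2q\leq n_j-1$, with $q_j\geq1$.
\item $\Up^0$ is empty. Indeed, $q\geq q_j=n_j$ would give $n_j+1=d_j(q+1)\geq 2n_j+2$, which is impossible.
\item If $j\in\Up'$, then $q\geq n_j$ and the contribution is at most $2n_j$.
\end{itemize}

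The second step treats the case $\Up=\emptyset$. Every factor then contributes at most $n_j-1$, so $N-2\leq N-r$. This forces $r=2$, with both contributions exactly $n_j-1$. A factor in $\Low'$ contributes at most $n_j-2$, so it is excluded. A factor in $\Low^0$ needs $0=n_j-1$, contradicting $n_j\geq2$. So both factors lie in $\Mid$, which requires $2q_j=n_j-1\geq 2q$, i.e.\ $q_j=q$ for both $j$. But $q=q_1+q_2$ with $q_1,q_2\geq1$, a contradiction.

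The third step treats the case $\Up\neq\emptyset$, and I expect this to be the main bookkeeping point. Set $s=\sum_{j\in\Up}n_j$. Then $q\geq s+\sum_{j\in\Mid}q_j$. Not every factor lies in $\Up$: otherwise $q=N$ and $\sum p_j=-\varepsilon<0$. Pick a factor $i\notin\Up$; it satisfies $n_i\geq 2q+1$. Inserting the bounds into the balance inequality gives
\[
2s+2\sum_{\Mid}q_j+\sum_{\Low}(n_j-2q-1)\geq s+\sum_{j\notin\Up}n_j-2 .
\]
Hence
\[
s+2\sum_{\Mid}q_j\geq \sum_{\Mid}n_j+\sum_{\Low}(2q+1)-2 .
\]
The left side is at most $2q-s\leq q$. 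The right side is at least $2q-1$, because it contains the factor $i$, and $n_i\geq 2q+1$ whether $i\in\Mid$ or $i\in\Low$. So $q\geq 2q-1$, i.e.\ $q\leq 1$. This contradicts $q\geq s\geq n_j\geq 2$.

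Hence no Bott summand survives, and $\mathbf{B}(q)=\mathbf{D}(q)=0$ for all $q$. The only delicate point is to keep the inequality $q\geq s+\sum_{\Mid}q_j$ and the lower bound $n_i\geq 2q+1$ straight in the last step; everything else follows directly from Theorem~\ref{theorem:Bott-vanishing-Pn}.
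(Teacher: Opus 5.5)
Your strategy is essentially the paper's. The paper also applies Theorem~\ref{theorem:Bott-vanishing-Pn} factor by factor, shows $\Up^0=\emptyset$, uses $n_j\geq 2q+1$ whenever $\ell_j=0$, and sums against the balance equation (Lemma~\ref{inequalityofN} and Remark~\ref{cor.inequality}). You package this as per-factor bounds on $p_j+q_j$ and split only into $\Up=\emptyset$ versus $\Up\neq\emptyset$. The paper instead uses a chain of case lemmas: all factors in $\Low$, all in $\Mid$, all in $\Up$, $\Mid\neq\emptyset$, and $\Mid=\emptyset\neq\Up$. Your organisation is tidier, and your per-factor bounds and your treatment of $\Up=\emptyset$ (including the use of $r\geq 2$) are correct.

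There is one false step in your last case. You assert that the left side is at most $2q-s\leq q$. But $q_j=0$ on $\Low$ and $\Up=\Up'$, so $q=s+\sum_{\Mid}q_j$, and the left side equals
\[
2q-s=q+\sum_{\Mid}q_j\geq q .
\]
This is strictly larger than $q$ whenever $\Mid\neq\emptyset$. That mixed case $\Mid\neq\emptyset\neq\Up$ is exactly the one the paper handles in Lemma~\ref{all:nonempty}, so the derivation of $q\leq 1$ is invalid as written. The repair is immediate and uses only what you already have. Compare $2q-s$ directly with your lower bound $2q-1$ for the right side to get $s\leq 1$. This contradicts $s\geq n_j\geq 2$ for any $j\in\Up$. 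With that one-line change the proof is complete.
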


Note that for each \(j\), we have \(p_j\leq n_j\).
For a fixed non-zero Bott summand, we set
\[
N_{\Low}:=\sum_{j\in\Low}n_j,\qquad
N_{\Mid}:=\sum_{j\in\Mid}n_j,\qquad
N_{\Up}:=\sum_{j\in\Up}n_j,
\]
and write \(N=N_{\Low}+N_{\Mid}+N_{\Up}\). Then
\[
q=\sum_{j=1}^{r}q_j,\qquad
P=\sum_{j=1}^{r}p_j=N-q-\varepsilon,
\]
where \(\varepsilon\in\{1,2\}\).

We begin by showing that $\Up^0$ is empty.
\begin{lemma}
If \(d_{j} \geq 2,\)  then \(\Up^{0}=\emptyset.\) 
\end{lemma}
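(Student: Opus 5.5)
The claim is immediate from the definition of \(\Up^{0}\) in Lemma~\ref{lemma:technical-lemma} together with the explicit shape of the twist \(\ell_j=(n_j+1)-(q+1)d_j\). The plan is to assume some index \(j\in\Up^{0}\) and derive a contradiction from the two conditions \(\ell_j=0\) and \(q_j=n_j\).

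First, \(j\in\Up^{0}\) gives \(q_j=n_j\), \(p_j=n_j\) and \(\ell_j=0\). Since \(q=\sum_i q_i\) and every \(q_i\geq 0\), we get \(q\geq q_j=n_j\).

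Next, write out the condition \(\ell_j=0\):
\[
n_j+1=(q+1)d_j .
\]
Combining \(q\geq n_j\) with \(d_j\geq 2\) gives
\[
(q+1)d_j\geq 2(n_j+1)>n_j+1 ,
\]
which contradicts \(\ell_j=0\). Hence no such \(j\) exists, and \(\Up^{0}=\emptyset\).

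This is the same computation as in the proof for \(\PP^{n_1}\times\PP^{n_2}\), where \(2\in\Up^{0}\) was ruled out because it would force \(d_2\leq 1\). It does not need the balance equation, the value of \(\varepsilon\), or the hypothesis \(n_j\geq 2\); only \(d_j\geq 2\) (indeed \(d_j\geq 1\) would already suffice). The argument is elementary throughout, and the only care needed is to use \(q\geq q_j\), not \(q=q_j\), since other factors can contribute to \(q\).
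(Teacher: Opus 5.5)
Your proof is correct and is essentially the paper's argument: from \(j\in\Up^{0}\) you get \(\ell_j=0\) and \(q\geq q_j=n_j\), and your inequality \((q+1)d_j\geq 2(n_j+1)>n_j+1\) is the paper's bound \(d_j=(n_j+1)/(q+1)\leq 1\) in another form. One correction to your closing remark: \(d_j\geq 1\) would \emph{not} suffice, because \(d_j=1\) together with \(q=n_j\) gives exactly \(\ell_j=0\). This is why the paper's \(\PP^{n_1}\times\PP^{n_2}\) proof, in the case \(1\in\Up^{0}\), can only conclude \(d_1=1\) and must then rule it out using \(q_2\geq 1\); so the hypothesis \(d_j\geq 2\) is genuinely needed.
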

\begin{proof}
 Assume \(j\in \Up^{0},\) then by definition \(\ell_{j}=0\) and \(q_{j} = n_{j},\) which implies \begin{align*}
  n_{j}+1&=(q+1)d_{j}\\
 \Rightarrow d_{j}&=\frac{n_{j}+1}{q+1}
 \end{align*}
 Since \(q=\sum\limits_{j}q_{j}= n_{j} +\sum\limits_{j^{'}\neq j}q_{j^{'}}, \) then \(d_{j}\leq \frac{n_{j}+1}{n_{j} +\sum\limits_{j^{'}\neq j}q_{j^{'}}+1}\leq 1.\) This is a contradiction since \(d_{j} \geq 2\).
\end{proof}

The following result gives a technical and useful inequality.
\begin{lemma}\label{inequalityofN} 
The inequality  \(N\leq 2q +\varepsilon +\sum\limits_{j\in \Low^{'}}(\ell_{j}-1)\) holds.
\end{lemma}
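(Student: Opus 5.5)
The plan is to compare the two expressions for \(P=\sum_j p_j\). The balance condition for a non-zero Bott summand in \(\mathbf{B}(q)\) or \(\mathbf{D}(q)\) gives \(P=N-q-\varepsilon\). The partition of Lemma~\ref{lemma:technical-lemma} gives a bound on each individual \(p_j\) in terms of \(q_j\) and \(\ell_j\). So it suffices to prove
\[
P\le q+\sum_{j\in\Low'}(\ell_j-1),
\]
since adding \(q+\varepsilon\) to both sides then yields the claimed inequality \(N\le 2q+\varepsilon+\sum_{j\in\Low'}(\ell_j-1)\).

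I would bound \(p_j\) class by class, using only the defining conditions of the sets in Lemma~\ref{lemma:technical-lemma}.
\begin{enumerate}
\item For \(j\in\Low'\): we have \(q_j=0\) and \(\ell_j\ge p_j+1\), so \(p_j\le \ell_j-1\).
\item For \(j\in\Low^0\): we have \(q_j=0\) and \(p_j=0\), so \(p_j=q_j\).
\item For \(j\in\Mid\): we have \(p_j=q_j\) by definition.
\item For \(j\in\Up\): we have \(q_j=n_j\), and always \(p_j\le n_j\), so \(p_j\le q_j\).
\end{enumerate}
For \(\Up\) no finer information is needed. In particular the preceding lemma that \(\Up^0=\emptyset\) is not actually required here, since \(\Up^0\) satisfies \(p_j=n_j=q_j\) anyway.

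Summing these bounds gives
\[
P\le \sum_{j\in\Low'}(\ell_j-1)+\sum_{j\in\Low^0\cup\Mid\cup\Up}q_j .
\]
Because \(q_j=0\) for every \(j\in\Low=\Low^0\sqcup\Low'\), the last sum equals \(\sum_{j=1}^r q_j=q\), which is the required bound on \(P\).

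There is no real obstacle here; the argument is pure bookkeeping. The only point needing care is the \(\Up'\) case. There the definition gives the lower bound \(p_j\ge n_j+1+\ell_j\), which is the wrong direction for our purposes, so one must instead use the trivial bound \(p_j\le n_j=q_j\).
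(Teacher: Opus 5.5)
Your proof is correct and is essentially the paper's argument. Both write \(N-q-\varepsilon=\sum_j p_j\), bound \(p_j\) by \(\ell_j-1\) on \(\Low'\), by \(q_j\) on \(\Low^0\cup\Mid\), and by \(n_j=q_j\) on \(\Up\), then sum using \(q_j=0\) on \(\Low\). Your remark that \(\Up^0=\emptyset\) is not needed is also right: the paper invokes it, but \(p_j=n_j=q_j\) on \(\Up^0\) anyway, so your version of this step does not rely on the hypothesis \(d_j\geq 2\).
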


\begin{proof}
Notice that \(N\leq 2q +\varepsilon +\sum\limits_{j\in \Low^{'}}(\ell_{j}-1)\) means \(N-q-\varepsilon \leq q+ \sum\limits_{j\in \Low^{'}}(\ell_{j}-1).\) Moreover,
\begin{align*}
N-q-\varepsilon &=\sum\limits_{j\in \Low\cup \Mid\cup \Up} p_{j}\\
&=\sum\limits_{j\in  \Low} p_{j}+\sum\limits_{j\in  \Mid} p_{j}+\sum\limits_{j\in \Up} p_{j}\\
&\leq \sum\limits_{j\in  \Low^{'}}( \ell_{j} -1)+\sum\limits_{j\in  \Mid} q_{j}+\sum\limits_{j\in \Up^{'}} n_{j}\tag{since in \(\Low^{0},p_{j}=0\) and \(\Up^{0}=\emptyset \)}\\
 &=\sum\limits_{j\in \Low^{'}}( \ell_{j} -1)+\sum\limits_{j\in  \Mid} q_{j} +N_{\Up}\\
 &=\sum\limits_{j\in \Low^{'}}( \ell_{j} -1)+q\tag{ since  \( q_{j}=0\) in \(\Low^{'}\) and \(n_{j}=q_{j} \) in \(\Up\)}\\
 \text{so,}~ N-q-\varepsilon &\leq q+ \sum\limits_{j\in \Low^{'}}(\ell_{j}-1) 
\end{align*}
\end{proof}

\begin{lemma}\label{Low:nonempty}
If \(q=0, \Mid=\Up=\emptyset\) then \(\mathbf{D}(q)=\mathbf{B}(q)=0.\)
\end{lemma}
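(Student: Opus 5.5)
We argue by contradiction: assume some Bott summand \(b^1_{q_1,p_1}\cdots b^r_{q_r,p_r}\) contributing to \(\mathbf{B}(0)\) or \(\mathbf{D}(0)\) is non-zero. Its partition \(\{1,\dots,r\}=\Low\sqcup\Mid\sqcup\Up\) from Lemma \ref{lemma:technical-lemma} has \(\Mid=\Up=\emptyset\), so every index lies in \(\Low=\Low^0\sqcup\Low'\) and every \(q_j=0\).

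The key input is that \(q=0\) pins down the twists: \(\ell_j=(n_j+1)-d_j\). Since \(d_j\geq 2\), we get \(\ell_j\leq n_j-1\). If \(j\in\Low^0\), then \(\ell_j=0\), which forces \(p_j=0\). If \(j\in\Low'\), then \(p_j\leq \ell_j-1\leq n_j-2\). In either case
\[
p_j\leq n_j-2 \quad\text{for every } j\in\Low',
\qquad p_j=0\leq n_j-2 \quad\text{for } j\in\Low^0,
\]
where the second inequality uses \(n_j\geq2\). Summing over \(j\) gives
\[
P=\sum_j p_j\leq N-2r.
\]

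On the other hand, the balance condition with \(q=0\) reads \(P=N-\varepsilon\) with \(\varepsilon\in\{1,2\}\). Hence \(N-\varepsilon\leq N-2r\), so \(2r\leq\varepsilon\leq2\). This gives \(r\leq 1\), contradicting \(r\geq2\). Alternatively, one can feed the same bounds into Lemma \ref{inequalityofN}: with \(q=0\) it reads \(N\leq\varepsilon+\sum_{j\in\Low'}(\ell_j-1)\leq \varepsilon+\sum_{j\in\Low'}(n_j-2)\), which yields the same contradiction once \(N=\sum_j n_j\) is compared term by term.

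So every summand vanishes, and \(\mathbf{B}(0)=\mathbf{D}(0)=0\) in this configuration. The only step needing care is the case \(\Low^0\). There the bound \(p_j=0\leq n_j-2\) relies on the hypothesis \(n_j\geq2\), and it is exactly this step that would fail for \(\PP^1\) factors. Otherwise the argument is a direct count, and I do not expect a genuine obstacle.
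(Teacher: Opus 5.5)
Your proof is correct and is essentially the paper's argument. The paper feeds the same bound $p_j\le \ell_j-1\le n_j-2$ on $\Low'$ into Lemma~\ref{inequalityofN} at $q=0$, obtaining $N_{\Low^0}+2|\Low'|\le\varepsilon\le 2$, and then contradicts $r\ge 2$; this last step implicitly uses $n_j\ge 2$ on $\Low^0$, which you make explicit.
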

\begin{proof}
For \(q=0\), Lemma \ref{inequalityofN} gives \(N-\varepsilon \leq  \sum\limits_{j\in \Low^{'}}(\ell_{j}-1).\) Since \(j\in \Low^{'},\) then \(\ell_{j}= n_{j}+1-d_{j}, \) which implies
\begin{align*}
\ell_{j}-1&=n_{j}-d_{j}\leq n_{j}-2\tag{\(d_j\geq 2\)}\\
\Rightarrow N-\varepsilon &\leq \sum\limits_{j\in \Low^{'}}(n_{j}- 2)=N_{\Low^{'}}-2|\Low^{'}|\\
\end{align*}
Hence \( N-N_{\Low^{'}}+2|\Low^{'}| \leq \varepsilon \leq 2,\) since \(\varepsilon=1,2\). This contradicts \(r\geq 2.\) 
\end{proof}
\begin{lemma}\label{Mid:nonempty}
If \(q\geq 1, \Low=\Up=\emptyset,\) it follows that \(\mathbf{D}(q)=\mathbf{B}(q)=0.\)  
\end{lemma}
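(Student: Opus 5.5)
Suppose for contradiction that some Bott summand in $\mathbf{B}(q)$ or $\mathbf{D}(q)$ is non-zero with $q\geq 1$ and $\Low=\Up=\emptyset$, so every index $j$ lies in $\Mid$. The plan is to use the defining conditions of $\Mid$ together with the balance equation $P=N-q-\varepsilon$ to bound $d_j$, and then contradict $d_j\geq 2$.

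First we use the definition of $\Mid$. For every $j$ we have $\ell_j=0$, $p_j=q_j$ and $1\leq q_j\leq n_j-1$. Hence
\[
P=\sum_j p_j=\sum_j q_j=q.
\]
Lemma \ref{inequalityofN}, or directly the relation $P=N-q-\varepsilon$ with $\Low'=\emptyset$, then gives
\[
N=2q+\varepsilon\leq 2q+2.
\]

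Next we use the vanishing of the twists. The condition $\ell_j=0$ means $n_j+1=(q+1)d_j$ for each $j$. Since $d_j\geq 2$, this gives $n_j\geq 2q+1$ for each $j$. Summing over $j$ and using $r\geq 2$ yields
\[
N\geq r(2q+1)\geq 4q+2.
\]
Combining the two bounds gives $4q+2\leq 2q+2$, so $q\leq 0$. This contradicts $q\geq 1$.

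Hence no non-zero summand of this type exists, and so $\mathbf{B}(q)=\mathbf{D}(q)=0$ in this case. The only point needing care is the per-factor bound $n_j\geq 2q+1$, which uses $q$ being the total degree rather than $q_j$. This is exactly how $\ell_j=(n_j+1)-(q+1)d_j$ is defined, so the argument goes through. As a consistency check, the separate constraint $q_j\leq n_j-1$ is not needed.
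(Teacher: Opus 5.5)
Your proof is correct and takes essentially the paper's route: both combine the balance relation $N=2q+\varepsilon$ (forced by $p_j=q_j$ on $\Mid$) with $\ell_j=0$, i.e.\ $n_j+1=(q+1)d_j$, against $d_j\geq 2$. The only difference is bookkeeping. The paper substitutes $q=(N-\varepsilon)/2$ into $d_1=(n_1+1)/(q+1)$ to get $d_1<2$, while you apply $n_j\geq 2q+1$ to every factor and sum to get $N\geq 4q+2$, which contradicts $N\leq 2q+2$.
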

\begin{proof}
If  \(\Low=\Up=\emptyset,\) then \(q=\sum\limits_{j}q_{j}=\sum\limits_{j}p_{j}=N-q-\varepsilon.\)  Thus, \(q=\frac{N-\varepsilon}{2}=\frac{N_{\Mid}-\varepsilon}{2}.\)  Notice also that \(\ell_{j}=0,\) so if for example \(\ell_{1}=0,\) it implies that 
\[d_{1}=\frac{n_{1}+1}{q+1}=\frac{n_{1}+1}{\frac{N_{\Mid}-\varepsilon}{2}+1}=\frac{2(n_{1}+1)}{N_{\Mid}-(\varepsilon-2)}< 2 \tag{\(\varepsilon=1,2\)}\]
This is a contradiction since each of the \(d_{j}\)'s is at least 2.
\end{proof}

\begin{lemma}\label{c:nonempty}
   For all \(j,\) if \(\Low=\Mid=\emptyset\)  then  \(\mathbf{D}(q)=\mathbf{B}(q)=0.\)
\end{lemma}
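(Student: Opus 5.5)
Assume \(\Low=\Mid=\emptyset\), so every index \(j\) lies in \(\Up\). By the preceding lemma \(\Up^0=\emptyset\) (since all \(d_j\geq 2\)), hence every index lies in \(\Up'\). The plan is to derive a contradiction from the degree bookkeeping, in the same spirit as Lemmas \ref{Low:nonempty} and \ref{Mid:nonempty}.

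First, since \(j\in\Up'\) means \(q_j=n_j\) for every \(j\), we get \(q=\sum_j q_j=N\). The balance condition then reads
\[
P=\sum_j p_j=N-q-\varepsilon=-\varepsilon<0 .
\]
This is impossible, since each \(p_j\geq 0\). That already contradicts the existence of a nonvanishing Bott summand, mirroring the last case of the \(\PP^{n_1}\times\PP^{n_2}\) theorem, where \(1,2\in\Up'\) gave \(p_1+p_2=-\varepsilon\).

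As a cross-check, one can reach the same conclusion through Lemma \ref{inequalityofN}. With \(\Low'=\emptyset\) it gives \(N\leq 2q+\varepsilon=2N+\varepsilon\), which is vacuous. So the direct count \(q=N\) is the right route, not that inequality.

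Alternatively, one can note that \(q=N\geq n_j\) for each \(j\), together with the fact that \(\mathbf{B}(q)\) and \(\mathbf{D}(q)\) involve \(\Omega^{n-q-1}\) and \(\Omega^{n-q-2}\) with \(n=N\). This makes the form degree negative, so the cohomology group is zero by the convention \(\Omega^p=0\) for \(p<0\).

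I do not expect a real obstacle here. The only point needing care is invoking \(\Up^0=\emptyset\), so that \(\Up=\Up'\) and \(q_j=n_j\) holds for every \(j\). In fact \(q_j=n_j\) holds on all of \(\Up\) anyway, so even that step is not essential. Every nonzero monomial \(b^1_{q_1,p_1}\cdots b^r_{q_r,p_r}\) is thereby excluded, and by the Künneth decomposition \(\mathbf{B}(q)=\mathbf{D}(q)=0\).
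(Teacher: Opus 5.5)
Your argument is correct and is essentially the paper's proof. Every index lies in \(\Up\), so \(q_j=n_j\) and \(q=N\), and the balance condition then forces \(\sum_j p_j=-\varepsilon<0\), a contradiction; your side remarks (that \(\Up^0=\emptyset\) is not needed, and that \(\Omega^{n-q-\varepsilon}\) has negative degree) are also valid.
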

\begin{proof}
If \(\Low=\Mid=\emptyset\),  then all \(j\) are in \(\Up,\)  \(q_{j}=n_{j}\) and \(q=\sum\limits_{j}n_{j}=N\). Thus, \(p=N-q-\varepsilon=q-q-\varepsilon=-\varepsilon,\) which is a contradiction since \(p\geq 0.\)
\end{proof}
\begin{remark}\label{cor.inequality}
The following inequality will be very useful: \[N_{\Low^{0}}+N_{\Mid}+N_{\Up} \leq 2q+\varepsilon-2(q+1)|\Low^{'}|.\]
\end{remark}
\begin{proof}
Recall that for all \(j\), \(\ell_{j}=n_{j}+1-(q+1)d_{j}, \) which implies that \begin{align*}
   &~  \ell_{j}-1 \leq n_{j}-2(q+1)\tag{\(d_{j}\geq 2\)}\\
&~ \Rightarrow \sum\limits_{j\in \Low^{'}}(\ell_{j}-1) \leq N_{\Low^{'}} -2(q+1)|\Low^{'}| \tag{summing over \(j\in \Low^{'}\)}\\
&~ \Rightarrow N \leq 2q +\varepsilon + N_{\Low^{'}}- 2(q+1)|\Low^{'}|\tag{by Lemma \ref{inequalityofN}}\\
&~ \Rightarrow N_{\Low^{0}} +N_{\Low^{'}}+N_{\Mid}+N_{\Up} \leq 2q+N_{\Low^{'}}+\varepsilon-2(q+1)|\Low^{'}|\\
&~ \Rightarrow N_{\Low^{0}} +N_{\Mid}+N_{\Up} \leq 2q+\varepsilon-2(q+1)|\Low^{'}|
\end{align*}
\end{proof}
\begin{lemma}\label{all:nonempty}
        If \(\Mid\neq \emptyset,\) then \(\mathbf{D}(q)=\mathbf{B}(q)=0.\) 
\end{lemma}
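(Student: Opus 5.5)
The plan is to play the inequality of Remark~\ref{cor.inequality} against the constraint $\ell_j=0$ that membership in $\Mid$ imposes. Suppose, for a contradiction, that some non-zero Bott summand occurring in $\mathbf{B}(q)$ or $\mathbf{D}(q)$ has $\Mid\neq\emptyset$. Fix an index $j_0\in\Mid$.

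First I extract a lower bound on $n_{j_0}$. By definition of $\Mid$ we have $\ell_{j_0}=0$, so
\[
n_{j_0}+1=(q+1)d_{j_0}\geq 2(q+1),
\]
because $d_{j_0}\geq 2$. Hence $n_{j_0}\geq 2q+1$, and in particular $N_{\Mid}\geq 2q+1$. (Membership in $\Mid$ also gives $q\geq q_{j_0}\geq 1$, although this is not needed below.)

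Next I use Remark~\ref{cor.inequality} to show $\Low'=\emptyset$. Combining it with the bound above gives
\[
2q+1\leq N_{\Low^0}+N_{\Mid}+N_{\Up}\leq 2q+\varepsilon-2(q+1)|\Low'|.
\]
Since $\varepsilon\leq 2$, this forces $2(q+1)|\Low'|\leq \varepsilon-1\leq 1$. As $2(q+1)\geq 2$, we get $|\Low'|=0$.

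Now the sets $\Low^0,\Mid,\Up$ exhaust $\{1,\dots,r\}$, so the left-hand side of the remark equals $N$, and the inequality reads $N\leq 2q+\varepsilon$. Since $r\geq 2$, there is at least one index $j\neq j_0$, and every $n_j\geq 2$. Therefore
\[
N\geq n_{j_0}+2(r-1)\geq 2q+1+2=2q+3>2q+\varepsilon,
\]
which is a contradiction. So no non-zero Bott summand has $\Mid\neq\emptyset$, and $\mathbf{B}(q)=\mathbf{D}(q)=0$ in this case.

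The step needing most care is the claim that $\Low'$ is empty. It relies on the precise form of Remark~\ref{cor.inequality}, which subtracts $2(q+1)|\Low'|$, together with $\varepsilon\leq 2$. Once $\Low'=\emptyset$, the argument reduces to a dimension count that uses $r\geq 2$ and $n_j\geq 2$ for all $j$. I would also check that the previous lemma ($\Up^0=\emptyset$) is not needed here, since $\Up^0$ is simply absorbed into $N_{\Up}$.
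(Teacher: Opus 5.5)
Your proof is correct and follows the paper's argument essentially step for step. You use $\ell_{j_0}=0$ and $d_{j_0}\geq 2$ to get $n_{j_0}\geq 2q+1$, feed this into Remark~\ref{cor.inequality} to force $\Low'=\emptyset$, and then contradict $N\leq 2q+\varepsilon$ with $N\geq 2q+3$ using $r\geq 2$ and $n_j\geq 2$; the only cosmetic difference is that you read off $N\leq 2q+\varepsilon$ from the Remark itself rather than from Lemma~\ref{inequalityofN}, which is equivalent once $\Low'$ is empty.
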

\begin{proof}
If \(\Mid\neq \emptyset,\) then for all \(j\in \Mid,\ell_{j}=0,\) which implies \(n_{j}+1=(q+1)d_{j}\geq 2(q+1),\) hence \(n_{j}\geq 2q+1\) and moreover \(N_{\Mid} \geq 2q+1. \) By Remark \ref{cor.inequality},
\begin{align*}
&2q+1 +N_{\Low^{0}}+N_{\Up} \leq2q+\varepsilon-2(q+1)|\Low^{'}|\\
&\Rightarrow 0\leq N_{\Low^{0}}+N_{\Up} \leq \varepsilon-2(q+1)|\Low^{'}|-1\\
  &\Rightarrow 2(q+1)|\Low^{'}| \leq \varepsilon-1\leq 1 \tag{\(\varepsilon=1,2\)}\\
 & \Rightarrow |\Low^{'}|=0 \Leftrightarrow  \Low^{'}=\emptyset
\end{align*}
Now, by Lemma \ref{inequalityofN} \(N\leq 2q+\varepsilon,\) and also for all \(j\in \Mid, n_{j}\geq 2q+1,\)  which implies that 
\begin{align*}
&N\geq n_{j}+2 \geq 2q+3 \tag{\(r\geq 2\) and  for all \(j, n_{j}\geq 2\)}\\
&\Rightarrow 2q+\varepsilon \geq 2q +3,
\end{align*}
which is a contradiction since \(\varepsilon=1,2.\)
\end{proof}
\begin{lemma}\label{LandU:nonempty}
Let \(q\geq 2, \Mid=\emptyset\) and \(\Up\neq \emptyset \) then \(\mathbf{B}(q)=\mathbf{D}(q)=0\).
\end{lemma}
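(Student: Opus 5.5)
The plan is to combine the inequality of Remark~\ref{cor.inequality} with the exact value of \(q\) forced by the hypotheses, and to show that \(\Low'\) and \(\Low^0\) must both be empty. That leaves every index in \(\Up\), which is the situation of Lemma~\ref{c:nonempty}.

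First I pin down \(q\). Since \(\Mid=\emptyset\) and \(\Up^0=\emptyset\) (because \(d_j\geq 2\)), every index lies in \(\Low'\sqcup\Low^0\sqcup\Up'\). We have \(q_j=0\) for \(j\in\Low\) and \(q_j=n_j\) for \(j\in\Up\). Hence
\[
q=\sum_j q_j=N_{\Up}.
\]

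Next I bound \(|\Low'|\) by substituting this into Remark~\ref{cor.inequality}:
\[
N_{\Low^0}+q\leq 2q+\varepsilon-2(q+1)|\Low'| .
\]
Since \(N_{\Low^0}\geq 0\), this gives \(2(q+1)|\Low'|\leq q+\varepsilon\leq q+2\). For \(q\geq 1\) the right-hand side is strictly smaller than \(2(q+1)\), so \(\Low'=\emptyset\).

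Then I rule out \(\Low^0\). With \(\Low'=\emptyset\), the same inequality reads \(N_{\Low^0}\leq q+\varepsilon\). For \(j\in\Low^0\) we have \(\ell_j=0\), so \(n_j+1=(q+1)d_j\geq 2(q+1)\), that is \(n_j\geq 2q+1\). This is the same manipulation used in the proof of Lemma~\ref{all:nonempty}. If \(\Low^0\) were nonempty, we would get \(2q+1\leq q+\varepsilon\), so \(q\leq \varepsilon-1\leq 1\), contradicting \(q\geq 2\). Hence \(\Low^0=\emptyset\).

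At this point \(\Low=\Mid=\emptyset\), so Lemma~\ref{c:nonempty} applies. Directly, \(q=N\), so \(P=N-q-\varepsilon=-\varepsilon<0\), which is impossible. Therefore no nonzero Bott summand exists, and \(\mathbf{B}(q)=\mathbf{D}(q)=0\).

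The only delicate step is the identification \(q=N_{\Up}\): it uses \(\Up^0=\emptyset\) and the vanishing of \(q_j\) on \(\Low\). Everything else is substitution into Remark~\ref{cor.inequality}. The hypothesis \(q\geq 2\) is stronger than the argument needs, since \(q\geq 2\) is used only to exclude \(\Low^0\), and there \(q\geq 2\) (indeed \(q\geq \varepsilon\)) suffices.
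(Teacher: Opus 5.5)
Your proof is correct and follows essentially the same route as the paper: substitute \(q=N_{\Up}\) into Remark~\ref{cor.inequality}, rule out \(\Low'\) because \(2(q+1)>q+\varepsilon\), rule out \(\Low^0\) via \(n_j\geq 2q+1\), and finish with Lemma~\ref{c:nonempty}. One small correction: \(q=N_{\Up}\) does not need \(\Up^0=\emptyset\), since \(q_j=n_j\) on all of \(\Up\); where \(\Up^0=\emptyset\) is genuinely used is inside Remark~\ref{cor.inequality}, through Lemma~\ref{inequalityofN}.
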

\begin{proof}
If \(\Mid=\emptyset,\) then \(q= q_{1}+\cdots +q_{r}=\sum\limits_{j^{'}\in \Low} q_{j^{'}}+ \sum\limits_{j\in \Up}q_{j} =N_{\Up},\) since \(q_{j^{'}}=0,\) for all \(j^{'} \in \Low.\) Applying Remark \ref{cor.inequality} gives 
\begin{align*}
&N_{\Low^{0}}+N_{\Mid}+N_{\Up} \leq 2q+\varepsilon-2(q+1)|\Low^{'}|\\
&\Rightarrow N_{\Low^{0}}+N_{\Up} \leq 2N_{\Up}+\varepsilon-2(q+1)|\Low^{'}|\tag{\(q=N_{\Up}\) and \(N_{\Mid}=0\)}\\
&\Rightarrow N_{\Low^{0}} \leq N_{\Up}+\varepsilon-2(q+1)|\Low^{'}|\\
&\Rightarrow N_{\Low^{0}} \leq q+\varepsilon-2(q+1)|\Low^{'}| \tag{\(q=N_{\Up}\)}\\
\end{align*}
From this we check two cases; either \(|\Low^{'}|\geq 1\) or \(|\Low^{'}|=0\).
If \(|\Low^{'}|\geq 1,\) then \(N_{\Low^{0}} \leq q+\varepsilon-2(q+1)|\Low^{'}\leq -1\), which is a contradiction.

If  \(|\Low^{'}|=0\) by Remark \ref{cor.inequality}, we have that \(N_{\Low^{0}}+N_{\Up} \leq 2N_{\Up} +\varepsilon,\)   which implies that \(N_{\Low^{0}}\leq q+\varepsilon.\) Recall that for all \(j\in \Low^{0},~ \ell_{j}=0 \) which implies  that \(n_{j}\geq 2q+1\) and thus \(N_{\Low^{0}}\geq 2q+1.\) This means that \(2q+1 \leq N_{\Low^{0}}  \leq q+\varepsilon \), since $q\ge 2$ this means $\Low^0 = \emptyset$ and we conclude by Lemma \ref{c:nonempty}.
\end{proof}

We combine all the above lemmas to prove our result.
\begin{proof}[Proof of Theorem \ref{theorem:general-products}]
If one of  \(\mathbf{B}(q)\) or \(\mathbf{D}(q)\) do not hold, choose a non-zero Bott summand. Then applying the Bott vanishing criterion in Theorem \ref{theorem:Bott-vanishing-Pn}, means that for all \(j\in \{1,\dots,r\}=\Low\cup \Mid \cup \Up,\) the products \(\prod b_{q_{j},p_{j},\ell_{q}}\) are all non zero.
This phenomenon breaks up in the following ways;
\begin{enumerate}
\item[1.] If \(\Mid=\Up=\emptyset,\) then all \(j\in \Low\) and this contradiction is Lemma \ref{Low:nonempty}.
\item[2.] If \(\Low=\Up=\emptyset,\) then all \(j\in \Mid\) and this contradiction is Lemma \ref{Mid:nonempty}.
\item[3.]If \(\Low=\Mid=\emptyset,\) then all \(j\in \Up\) and this contradiction is Lemma \ref{c:nonempty}.
\item[4.] If \(\Mid\neq \emptyset, \) then all \(j\) are  in \(\Mid\)  or \(\Low\cup \Mid\) or \(\Mid\cup \Up\) or \(\Low\cup \Mid\cup \Up\). Moreover, this also implies that \(q\geq 1\) which is the contradiction in  either Lemma \ref{Mid:nonempty} or Lemma \ref{all:nonempty}.
\item[5.] If \(\Mid=\emptyset\) and \(\Up\neq \emptyset,\) with \(q\geq 2\) then  either all \(j\) are in \(\Up\) (which is covered in Lemma \ref{c:nonempty}) or \(\Low\cup \Up\) which is implied by Lemma \ref{LandU:nonempty}.
\end{enumerate}

\end{proof}

\subsection{Exceptional families and failure of the vanishing criterion}

Here we examine cases where the above method does not work.
That is, the vanishing \eqref{Bq} or \eqref{Dq} fails.
These cases do not necessarily provide counterexamples.
There are three possibilities:

\begin{enumerate}
    \item $\mathbf{C}(0)$ holds for every smooth hypersurface;
    \item the generic hypersurface has $\mathbf{C}(0)$, but there is a closed locus where $h^0(T_X)>0$;
    \item $h^0(T_X) >0$ for every smooth hypersurface in the linear system.
\end{enumerate}

We shall see that all 3 possibilities are realised.

\subsubsection{Pencils of even-dimensional quadrics}\label{section:pencils-of-quadrics}
    Assume that $\Char k \neq 2$.
    We consider a hypersurface $X$ of degree $(d,2)$ with $d>1$ in $Y=\PP^1 \times \PP^{n}$ for $n>1$ and odd.
    We take coordinates $([u:v],[x])$ on $Y$ and suppose that $F(u,v;x)$ is a bihomogeneous form defining our smooth hypersurface $X$.
    Then we can write $F= x^TQ(u,v)x$ where $Q$ is a symmetric $(n+1)\times (n+1)$ matrix whose entries are binary forms of degree $d$.
    Consider $\pr_1: X \to \PP^1$ which is a pencil of even-dimensional quadrics.
    The discriminant of this family is $\Delta(X):= \det Q(u,v)$.
    Then, classically \cite[Proposition 2.1]{Reid:1972} we know that $X$ is smooth if and only if $\Delta(X)$ is square-free.

    The failure of $\mathbf{B}(\frac{n-1}{2})$ is exactly given by the primitive middle cohomology of an even-dimensional quadric \cite[Proposition 1.12]{Reid:1972}.

    However, the automorphism group is still finite and hence $\mathbf{C}(0)$ holds.
    This follows since any automorphism of $X$ would be an automorphism of $Q$ and thus give an automorphism of $\PP^1$ preserving the discriminant which is at least 4 points.
    Thus the group must be finite.

    If we consider a hypersurface $X = V(uQ_0(x)+vQ_1(x))$ of degree $(1,2)$, we have $X = \Bl_Z\PP^n$ where $Z = V(Q_0,Q_1)$.
    Again, using arguments with the discriminant, one can prove that for $X$ smooth, the automorphism group must be finite.

\subsubsection{Syzygy bundles}

The other, more interesting class of exceptional families where our method fails is where the hypersurfaces are projectivisations of so-called \emph{syzygy bundles}.

Write
\[ F(x,y)=\sum_{j=0}^{n_2}y_jF_j(x), \qquad F_j\in H^0(\PP^{n_1},\cO_{\PP^{n_1}}(d)). \]
The $F_j$ define a sequence
\[
0\longrightarrow \cK\longrightarrow
\cO_{\PP^{n_1}}^{\oplus n_2+1}
\xrightarrow{(F_0,\ldots,F_{n_2})}
\cO_{\PP^{n_1}}(d)\longrightarrow0.
\]
Provided that the \(F_j\) have no common zero, this sequence is exact on the right and \(\cK\) is a vector bundle of rank \(n_2\).  
With the convention that $\PP(\cK)$ parametrises lines in its fibres, the fibre over $x\in\PP^{n_1}$ is
\[
 \{y\in\PP^{n_2}:\textstyle\sum_jy_jF_j(x)=0\}
   =\PP(\cK)_x \enspace .
\]
Consequently
\[
 X\simeq\PP_{\PP^{n_1}}(\cK).
\]

If the $F_j$ are linearly independent and $W=\langle F_0,\ldots,F_{n_2}\rangle$, then \(\cK\) is a syzygy bundle $M_W$ in the sense of \cite[Introduction]{Coanda:2011}.

One can compute using the Bott formulas above that indeed the vanishing required by our method fails specifically $\mathbf{B}(2)$.
However, there are bidegrees where finiteness holds generically, but smooth members exist with $H^0(X,T_X) \neq 0$ and hence positive dimensional automorphism group scheme, at least in characteristic 0.
We illustrate this with one concrete family.

Assume that $k$ is algebraically closed of characteristic zero.
Let $W\subset H^0(\PP^2,\mathcal O(2))$ be a basepoint-free four-dimensional subspace, with basis $F_0,\ldots,F_3$, and define its syzygy bundle by

\[
0\longrightarrow M_W\longrightarrow W\otimes\mathcal O_{\PP^2}
\longrightarrow\mathcal O_{\PP^2}(2)\longrightarrow0.
\]

With the convention that projectivisation parametrises lines,
\[
X_W:=V\Bigl(\sum_{i=0}^3y_iF_i(x)\Bigr)
\simeq\PP_{\PP^2}(M_W)
\subset\PP^2\times\PP^3.
\]
In particular, $X_W$ is smooth since it is the projective bundle $\PP(M_W)$.
The vanishing required by our method fails, since Bott's formula gives
\[H^2(\PP^2,\Omega_{\PP^2}^1(-3)) \otimes H^0(\PP^3,\mathcal O_{\PP^3}(1)) \neq0.\]

Nevertheless, $\Aut(X_W)$ is finite for general $W$.
Indeed, by Grothendieck-Lefschetz we have that $\Pic(X_W) \cong \ZZ \oplus \ZZ$.
Automorphisms cannot exchange the two copies of $\ZZ$, so their complete linear systems extend every automorphism to $\PP^2\times\PP^3$.
Linear independence of the $F_i$ then gives
\[\Aut(X_W)\simeq\Stab_{\PGL_3}(W).\]
Passing to the annihilator of $W$ identifies this stabiliser with that of a pencil of quadrics in the dual representation.
For general \(W\), the pencil \(\PP(W^\perp)\) has four distinct base points in general position, so its stabiliser acts faithfully on this four-point set and is therefore finite; since \(\Stab_{\PGL_3}(W)=\Stab_{\PGL_3}(W^\perp)\), the same holds for \(\Stab_{\PGL_3}(W)\).

On the other hand, consider
\[X_0=V(y_0x_0^2+y_1x_1^2+y_2x_2^2+y_3x_0x_1).\]
Its coefficients have no common zero, so $X_0$ is smooth.
The diagonal torus of $\PP^2$ acts faithfully on $X_0$, with compensating weights on the $y_i$.
Thus $h^0(X_0,T_{X_0})>0$, although the general member of this family has finite automorphism group.


\section{Applications to moduli spaces}

In this section we describe an immediate application to moduli spaces of hypersurfaces. Fix $Y$ a projective simplicial toric variety and we additionally assume that $\Aut(Y)$ is (geometrically) reductive.

Note that this is a strong hypothesis which is not always satisfied. Indeed toric varieties have automorphism groups which are non-reductive.
Also note that, in positive characteristic, when the automorphism groups identity component is not a torus, it is not linearly reductive.
However, there is a large class of toric varieties which do in fact have reductive automorphism group, even many weighted projective spaces.

Fix a quasismooth ample divisor \(X\subset Y\), and let \(G=\Aut(Y,[X]) \subset \Aut(Y)\) be the group of automorphisms of $Y$ which fix the class \([X] \in \Cl(Y)\).
This is a finite index subgroup of \(\Aut(Y)\).

Denote the complete linear system of \([X]\) system by
\[H=\lvert\cO_Y(X)\rvert .\]
We assume that
\[H^0(X',T_{X'})=0\]
for every quasismooth \(X'\in H\).

\begin{definition}
    Let \(\Delta\) be the regular $A$-determinant associated to the linear system \(|X|\) as defined in \cite[Chapter 11]{GKZ}.
    We say that \(H\) is non-defect if \(\Delta\) is non-constant.
    In this case we have 
    \[H^{\QS}=H_\Delta ,\]
    as in \cite[Theorem 11.1.6]{GKZ}.
\end{definition}

We shall assume from now on that $H$ is non-defect.

\begin{remark}
In characteristic zero the nondefect hypothesis is very mild.
If \(P_X\) is the lattice polytope associated to \(\cO_Y(X)\), then it holds whenever \(P_X\) has lattice width at least \(2\).
Indeed, a dual-defective toric configuration necessarily has lattice width one \cite{DickensteinNill:2010,FurukawaIto:2021}.

In particular, if \(X\sim mD\) with $D$ an ample Weil divisor whose monomial linear system has full-dimensional support and \(m\geq2\), then \(P_X=mP_D\) has lattice width at least \(2\), and the hypothesis is automatic.
Thus Theorem~\ref{theorem:moduli-stacks} automatically applies to every second or higher multiple of an ample Cartier toric polarization.
\end{remark}

We denote the quotient stack by
\[
\cM_{X\subset Y}^{\QS}
   :=\left[H^{\QS}/G\right].
\]

Using our results on automorphism groups, we obtain a toric analogue of Benoist's result for hypersurfaces in projective space \cite{Benoist:2013}.

\begin{theorem}\label{theorem:moduli-stacks}
The stack \(\cM^{\QS}_{X\subset Y}\) is a smooth separated Deligne--Mumford stack with coarse moduli space
\[\pi\colon \cM^{\QS}_{X\subset Y}\longrightarrow M,\]
where \(M\) is a normal affine variety.
\end{theorem}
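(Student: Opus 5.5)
The plan is to follow Benoist's strategy: build the stack as a quotient, get the Deligne--Mumford property from the vanishing of vector fields, get separatedness from the discriminant, and obtain the coarse space from geometric invariant theory.

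\emph{Smoothness and quotient structure.} Since \(H^{\QS}=H_\Delta\) is the complement of the hypersurface \(\{\Delta=0\}\) in the projective space \(H\), it is a smooth affine open subscheme of \(H\). The group \(G\) is smooth and affine and acts on \(H^{\QS}\), so \(\cM^{\QS}_{X\subset Y}=[H^{\QS}/G]\) is a smooth algebraic stack.

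\emph{Deligne--Mumford property.} For this I would show that the stabilisers are unramified. Let \(X'\in H^{\QS}\). The stabiliser \(\Stab_G(X')\) is a closed subgroup scheme of \(G\), and it maps to \(\Aut_{X'/k}\) by restriction. This map is injective on Lie algebras. Indeed, a vector field on \(Y\), i.e.\ an element of \(\Lie G\) coming from a \(G\)-invariant derivation of the Cox ring, that is tangent to \(X'\) and restricts to zero on \(X'\) must vanish. The reason is that \(X'\) is ample, hence moves, and is not contained in the fixed locus of a nontrivial one-parameter family. I would check this via the Cox-ring description of \(\Lie\Aut(Y)\).

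Since \(H^0(X',T_{X'})=0\) by assumption, the Lie algebra of \(\Stab_G(X')\) is zero. So every stabiliser is étale and finite type, and the stack is Deligne--Mumford by the standard criterion: the diagonal is unramified.

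\emph{Separatedness and coarse space.} Here I would use GIT with the nonconstant invariant \(\Delta\). Since \(\Delta\) is a semi-invariant of degree \(>0\) for \(G\), every point of \(H_\Delta\) is semistable for the linearisation \(\cO_H(\deg\Delta)\). Moreover \(H_\Delta=H^{ss}_\Delta\) is affine and \(G\)-stable. The affine GIT quotient \(M:=H_\Delta/\!\!/G=\operatorname{Spec}\cO(H_\Delta)^G\) exists and is of finite type, because \(G\) is (geometrically) reductive. It is normal because \(H_\Delta\) is normal.

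To identify \(M\) as a geometric quotient I need orbits in \(H_\Delta\) to be closed, i.e.\ all points stable. Finite stabilisers give dimension control. Closedness follows since any orbit in its closure would be of strictly smaller dimension, hence would have a positive-dimensional stabiliser, contradicting finiteness. Thus \(H_\Delta\to M\) is a geometric quotient with finite stabilisers. The action is then proper, and properness of \(G\times H_\Delta\to H_\Delta\times H_\Delta\) is exactly separatedness of the stack. Finally, by Keel--Mori (or directly), \(M\) is the coarse moduli space.

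\emph{Main obstacle.} I expect the hardest step to be separatedness, i.e.\ properness of the action, in positive characteristic. There \(G\) is only geometrically reductive, and I must avoid nonreduced stabilisers. I would first try to argue as Benoist does: stable points with finite reduced stabilisers give a proper action through the valuative criterion, using that a one-parameter subgroup limit within \(H_\Delta\) would produce a \(\mathbb{G}_m\) in some stabiliser.
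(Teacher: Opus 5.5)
Your overall architecture matches the paper's proof. The ingredients are: smoothness of $H^{\QS}$ and $G$; stabilisers with zero Lie algebra, hence finite and reduced; closed orbits because every orbit in $H^{\QS}$ has dimension $\dim G$; hence all points stable and the action proper by GIT; and $M=H^{\QS}/\!\!/G$ as a normal affine coarse space. The gap is in the one step you add: injectivity of $\Lie\Stab_G(X')\to H^0(X',T_{X'})$, which the paper simply asserts as $\Stab_G(X')\subseteq\Aut(X')$. Your reason for it, that an ample divisor cannot lie in the fixed locus of a nontrivial one-parameter subgroup, is false. The $\GG_m$ scaling $x_0$ fixes the hyperplane $\{x_0=0\}\subset\PP^n$ pointwise, and the diagonal torus of $\PGL_2$ fixes the ample divisor $\{0,\infty\}\subset\PP^1$ pointwise. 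The second example even satisfies all hypotheses of the theorem as literally stated: reduced points carry no vector fields, and the discriminant $b^2-4ac$ is nonconstant. Yet $[H^{\QS}/\PGL_2]\cong B(\GG_m\rtimes\ZZ/2)$ is neither Deligne--Mumford nor separated. So injectivity cannot come from ampleness alone; it needs $\dim Y\geq 2$ together with the hypothesis $H^0(X',T_{X'})=0$.

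Here is one way to supply it. The kernel is $H^0(\cY,T_{\cY}(-\cX'))$. Twist the generalised Euler sequence $0\to\cO_{\cY}^{\oplus s}\to\bigoplus_\rho\cO_{\cY}(D_\rho)\to T_{\cY}\to 0$ by $\cO_{\cY}(-\cX')$, where $s$ is the rank of $\Cl(Y)$ and the $D_\rho$ are the torus-invariant divisors. By Serre duality, $H^1(\cY,\cO_{\cY}(-\cX'))$ is dual to $H^{n-1}(\cY,\Omega^n_{\cY}(\cX'))$, which vanishes by Bott vanishing when $n\geq 2$. Hence the kernel is $\bigoplus_\rho H^0(\cY,\cO_{\cY}(D_\rho-\cX'))$.

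Suppose some summand is nonzero. Multiplying the equation $F'$ of $X'$ by a nonzero section of $\cO_{\cY}(D_\rho-\cX')$ lands in $S_{[D_\rho]}$, which is spanned by $x_\rho$ and monomials not involving $x_\rho$. Comparing leading monomials, one of two things happens. Either $F'=cx_\rho+G$ with $c\neq 0$ and $G$ free of $x_\rho$; then the root automorphism $x_\rho\mapsto x_\rho-G/c$ carries $X'$ onto the positive-dimensional toric variety $D_\rho$. Or $F'$ does not involve $x_\rho$; then the one-parameter subgroup of the torus scaling $x_\rho$ preserves $X'$, and acts nontrivially on it unless $X'$ is itself torus-invariant. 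In every case $X'$ has a nonzero vector field, contradicting the hypothesis. With injectivity established this way, the rest of your argument goes through as in the paper.
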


\begin{proof}
We may assume that \(k\) is algebraically closed and note \(H^{\QS}\) is a smooth affine variety.

For every \(X'\in H^{\QS}\), we have that \(\Stab_G(X')\subseteq\Aut(X')\).

Our hypothesis \(H^0(X',T_{X'})=0\) means that \(\Stab_G(X')\) is finite and reduced.
Hence the quotient stack is DM.
Moreover, all \(G\)-orbits in \(H^{\QS}\) have dimension \(\dim G\).
Thus every orbit in \(H^{\QS}\) is closed.

It follows that every point of \(H^{\QS}\) is stable.
Hence the action of \(G\) on \(H^{\QS}\) is proper by \cite[Corollary~2.5]{GIT}.
Thus the quotient stack is separated; its finite reduced stabilisers show that it is Deligne--Mumford. It is smooth because both \(H^{\QS}\) and \(G\) are smooth.

Finally, the GIT quotient
\[M=H^{\QS}/\!/G = H^\QS / G\]
is the coarse moduli space.
Since \(H^{\QS}\) is affine and normal and \(G\) is geometrically reductive, \(M\) is a normal affine variety associated to the ring of invariants.
\end{proof}

\begin{remark}[Further work]
It would be natural to extend this construction to quasismooth complete intersections in toric varieties, in the spirit of \cite{Benoist:2013}.
Beauville's recent work on maximal variation of linear systems \cite{Beauville:2026} suggests a deformation-theoretic perspective on these quotient stacks; understanding this connection is work in progress.

Further, it should be possible using the theory of non-reductive quotients to drop the geometrically reductive assumption in some cases.
This is also work in progress.
\end{remark}

\printbibliography

\bigskip

\noindent
\textsc{Dominic Bunnett}\\
Institut für Mathematik, Technische Universität Berlin, Berlin, Germany\\
\texttt{bunnett@math.tu-berlin.de}

\medskip

\noindent
\textsc{Caroline Namanya}\\
Department of Mathematics, Makerere University, Kampala, Uganda\\
\texttt{caronamanya97@gmail.com}

\end{document}